\newtheorem{remark}[theorem]{Remark}
\newtheorem{assumption}{Assumption}
\definecolor{daocolor}{rgb}{0.0,0,1.0}
\newcommand{\R}{\mathbb{R}}
\newcommand{\C}{\mathbb{C}}
\newcommand{\p}{\partial}
\newcommand{\calH}{\mathcal{H}}
\newcommand{\calA}{\mathcal{A}}
\newcommand{\calL}{\mathcal{L}}
\newcommand{\calN}{\mathcal{N}}
\newcommand{\LN}{\left\|}
\newcommand{\RN}{\right\|}
\newcommand{\LC}{\left(}
\newcommand{\RC}{\right)}
\newcommand{\LB}{\left[}
\newcommand{\RB}{\right]}
\newcommand{\abs}[1]{\left\vert#1\right\vert}
\newcommand{\norm}[1]{\left\Vert#1\right\Vert}
\newcommand{\paren}[1]{\left(#1\right)}
\newcommand{\bracket}[1]{\left[#1\right]}
\newcommand{\set}[1]{\left\{#1\right\}}
\newcommand{\inner}[2]{\left\langle #1,#2\right\rangle}
\newcommand{\Xmax}{{X_{\max}}}
\newcommand{\Rmax}{{R_{\max}}}
\DeclareMathOperator{\littleo}{o}
\DeclareMathOperator{\cn}{cn}
\numberwithin{equation}{section}
\numberwithin{theorem}{section}
\title{Petviashvilli's Method for the Dirichlet Problem}
\author{D.~Olson\footnotemark[1] \and S.~Shukla\footnotemark[1]\and
  G.~Simpson\footnotemark[2] \and D.~Spirn\footnotemark[1]
\footnotetext[1]{School of Mathematics, University of Minnesota,
  Minneapolis, MN 55455, USA}
\footnotetext[2]{Department of Mathematics, Drexel University,
  Philadelphia, PA 19104, USA}}
\date{\today}
\begin{document}

\maketitle
\begin{abstract}
  We examine the Petviashvilli method for solving the equation
  $ \phi - \Delta \phi = |\phi|^{p-1} \phi$ on a bounded domain
  $\Omega \subset \R^d$ with Dirichlet boundary conditions.  We prove
  a local convergence result, using spectral analysis, akin to the
  result for the problem on $\R$ by Pelinovsky \& Stepanyants
  \cite{Pelinovsky:2004bv}.  We also prove a global convergence result by
  generating a suite of nonlinear inequalities for the iteration
  sequence, and we show that the sequence has a natural energy that
  decreases along the sequence.
\end{abstract}

\section{Introduction}

Many nonlinear dispersive wave equations, including the nonlinear
Schr\"odinger equation (NLS)
\begin{equation}
  \label{e:nls}
  i u_t + \Delta u + \abs{u}^{p-1} u = 0, \quad u:\R^{d+1} \to \C,
\end{equation}
and the nonlinear wave equation (NLW)
\begin{equation}
  \label{e:nlw}
  u_{tt} = \Delta u + \abs{u}^{p-1} u, \quad u:\R^{d+1}\to \C.
\end{equation}
possess exact nonlinear bound state solutions, such as solitons.
These can be obtained by first making the ansatz
$u(x, t) = e^{i \lambda t} \phi(x)$ with $\lambda \in \R$.  For
\eqref{e:nls}, one obtains
\begin{equation}
  \label{e:Rn_boundstate}
  -\lambda \phi + \Delta \phi  + \abs{\phi}^{p-1}\phi =0,\quad \phi:
  \R^{d} \to \C.
\end{equation}
A similar equation is obtained for \eqref{e:nlw}, but with $\lambda^2$
in place of $\lambda$.  Typically, we also assume
\begin{equation}
  \label{e:Rn_bc}
  \lim_{\abs{x}\to \infty} \abs{\phi(x) } = 0,
\end{equation}
though other boundary conditions are possible.  A challenge to
computing numerical solutions to \eqref{e:Rn_boundstate} such that
\eqref{e:Rn_bc} holds is that $\phi = 0$ is a solution.  Indeed, one
could attempt to solve \eqref{e:Rn_boundstate} by discretizing it and
then applying Newton's method. But its success would depend on the
initial guess, $u_0$, which could be in the domain of attraction of
the zero solution.  Thus, there is demand for an algorithm which can
avoid the trivial solution without requiring preconditioning of a
starting guess.

The same concerns arise when we consider the prototypical semilinear
elliptic equation
\begin{gather}
  \label{e:boundstate}
  -\phi + \Delta \phi + \abs{\phi}^{p-1}\phi=0, \quad
  \phi:\Omega \to \R\\
  \label{e:dirichlet_bc}
  \phi|_{\partial \Omega} = 0,
\end{gather}
where $\Omega$ is a bounded subset of $\R^d$ with sufficiently smooth
boundary.  This problem appears when one tries to numerically solve
\eqref{e:Rn_boundstate} subject to \eqref{e:Rn_bc}.  First the problem
must be reformulated on a finite domain, $\Omega$, and then an
artificial boundary condition must be introduced.  Here, we consider the Dirichlet boundary condition,
\eqref{e:dirichlet_bc}.  Inspired by work on \eqref{e:Rn_boundstate}
in \cite{Petviashvilli:1976aa}, we shall adapt and analyze
Petviashvilli's method, discussed below, to the Dirichlet problem.

\subsection{Ground States and Excited States}

Associated with \eqref{e:boundstate} is the energy functional
\begin{equation} \label{e:egroundstate} E(\psi) := {1\over 2} \LN \psi
  \RN^2_{H^1} - {1\over p+1} \LN \psi \RN^{p+1}_{L^{p+1}}.
\end{equation}
Taking the first variation of \eqref{e:egroundstate}, we see that
critical points of $E$ correspond to weak solutions, in the sense of
$H^1$, of \eqref{e:boundstate}. In general, this functional is
unbounded from below, so solutions of \eqref{e:boundstate} do not
correspond to global minimizers.  However, if one considers the
question of minimizing the energy over the set of nontrivial solutions
to \eqref{e:boundstate}, one finds that there is indeed a
minimizer---the so called ground state
\cite{Badiale:2011ug,Cazenave:aa,Sulem:1999kx}.

In general, one cannot rule out the existence of other nontrivial
solutions to \eqref{e:boundstate}.  Indeed, we immediately have that
if $u$ is a solution, so is $-u$.  But more complicated solutions may
also arise.  For the problem posed on all of $\R^d$, there are
infinitely many other solutions \cite{Sulem:1999kx}.  As these have
greater energy than the ground state, they are deemed to be excited
states.

Here, we will develop an algorithm which may be of use in computing
both ground and excited state solutions.

\subsection{Petviashvilli's Method \& Related Algorithms}

Originally developed to obtain soliton solutions of a
Kadomtsev--Petviashvilli equation, Petviashvilli's iteration method
for \eqref{e:boundstate} is
\begin{subequations}
  \label{e:petvia_iter}
  \begin{align}
    u_{n+1}& = M[u_n]^\gamma (I
             -\Delta)^{-1} \paren{\abs{u_n}^{p-1}u_n}, \\
    \label{e:Mfunc}
    M[u] &= \frac{\inner{(I - \Delta)u}{u}}{\inner{\abs{u}^{p-1}u}{u}},
  \end{align}
\end{subequations}
where the inner product is that of $L^2$, and $\gamma$ is in an
admissible range identified below in Assumption \ref{a:gamma}. For
brevity of notation, we denote the iteration operator for
\eqref{e:petvia_iter} by $\calA$:
\begin{equation*}
  \label{e:petvia_op}
  u_{n+1} = \calA(u_n).
\end{equation*}

As was shown in \cite{Pelinovsky:2004bv}, this method is locally
non-linearly convergent about nonlinear bound states.  There, the
result was demonstrated for the case of $d=1$ and on the real line but
allowed for more general elliptic operators. Analysis of the algorithm
to more specific cases appeared in \cite{Chugunova:2007wg} for a fifth
order Korteweg--de~Vries equation and in \cite{Demanet:2006gi} for
cubic NLS in $\R^3$.  In practice, this algorithm is extremely
robust to the choice of $u_0$.  Initial conditions, far from any
solution, still converge to solutions of \eqref{e:boundstate}; see the
numerical experiments, below, in Section \ref{s:examples}.  This
robustness, which is not addressed by local analysis, is one of the
main motivations for our work.

Petviashvilli's method was also generalized in
\cite{Lakoba:2007cg,Musslimani:2004wx} allowing for external
potentials and more complicated ({\it i.e.} non-homogeneous)
nonlinearities.  In these works, they were able to handle semilinear
elliptic systems and also obtained excited state solutions.  The
performance of the algorithm was also improved using ``mode
elimination'' \cite{Lakoba:2007kp}.  More recently, the $M$
functional, \eqref{e:Mfunc}, was generalized in \cite{Alvarez:2014ec}.

Algorithms related to Petviashvilli's method include the imaginary
time method \cite{Yang:2008kn}, the squared operator method
\cite{Yang:2007vu}, and spectral renormalization
\cite{Ablowitz:2005tu}.  The imaginary time method allows the user to
find a solution of \eqref{e:Rn_boundstate} with a given $L^2$ norm,
but unknown constant $\lambda$.  This if of use in certain physical
applications such as Bose-Einstein condensate.  The squared operator
method improves the performance and stability of these
algorithms. Lastly, spectral renormalization allows for more general
nonlinearities and external potentials.

Numerically, solutions are typically sought for the problem posed on
$\R$ or $\R^2$ by studying the problem on a sufficiently large finite
domain, $[-x_{\max}, x_{\max})^d$, with periodic boundary conditions.
This allows for the use of the Fast Fourier Transform, making the
inversion of constant coefficient elliptic operators easy.  In at
least one work, where a radially symmetric solution was sought, the
domain was reduced to $[0, \infty)$, and an artificial Robin boundary
condition was imposed at some $r_{\max}$ \cite{Baruch:2011fm}.

\subsection{Main Results \& Relations to Other Work}

We consider the Dirichlet problem, \eqref{e:boundstate}, and apply
iteration scheme \eqref{e:petvia_iter} to obtain non-trivial
solutions.  In this context, we are able to obtain a strong local
convergence result and a weak global convergence result.

For both results, we make the following assumptions:
\begin{assumption}
  \label{a:domain}
  $\Omega\subset \R^d$ is a bounded, convex,
  open set (hence, it has Lipschitz boundary).
\end{assumption}
\begin{assumption}
  \label{a:subcrit}
  The nonlinearity is superlinear and subcritical, satisfying:
  \begin{equation} \label{e:subcrit} 1< p < d^\dagger
    \equiv \begin{cases}
      \infty & d=1,2,\\
      \frac{d+2}{d-2} & d \geq 3.
    \end{cases}
  \end{equation}
\end{assumption}
Next, we assume that $\gamma$ lives in an admissible range, identified
in \cite{Pelinovsky:2004bv},
\begin{assumption}
  \label{a:gamma}
  \begin{equation}
    \label{e:gamma_range}
    1 < \gamma <\frac{p+1}{p-1}.
  \end{equation}
\end{assumption}

Assumption \ref{a:domain} is essential to our use of elliptic theory
and Sobolev embedding results.  For computation, this is a very mild
restriction, as our domains are typically polygonal or smoother.
Assumption \ref{a:subcrit} is needed to ensure the existence of
non-trivial solutions to \eqref{e:boundstate} and will be used
throughout the paper.  Assumption \ref{a:gamma} is needed for the
algorithm to be linearly stable.  Within the range of admissible
$\gamma$, a distinguished value is
\begin{equation}
  \label{e:gamma_star}
  \gamma_\star = \frac{p}{p-1}.
\end{equation}
This will play a special role in both our linear and nonlinear
analysis.

To state our first result, we will need one additional, spectral,
assumption.  We note here, and highlight again in Section
\ref{s:spectral}, that $p$ is an eigenvalue of
$(I-\Delta)^{-1}(p\abs{\phi}^{p-1}\bullet)$ corresponding to the
eigenfunction $\phi$.  We assume:
\begin{assumption}
  \label{a:spectral}
  Viewing $(I-\Delta)^{-1}(p\abs{\phi}^{p-1}\bullet)$ as an operator
  from $H^1_0$ to itself,
  \begin{equation}
    \label{e:spec_cond}
    \sigma((I-\Delta)^{-1}(p\abs{\phi}^{p-1}\bullet))\setminus (-\infty, 1) = \set{p},
  \end{equation}
  and the algebraic multiplicity of $p$ is one.
\end{assumption}

Our first result is on the local convergence of the algorithm.
\begin{theorem}[Strong Local Convergence]
  \label{t:local}
  Assume Assumptions \ref{a:domain}, \ref{a:subcrit}, \ref{a:gamma},
  and \ref{a:spectral} hold.  Given a solution $\phi$ of
  \eqref{e:boundstate}, let
  $\calA'(\phi):H^1_0(\Omega) \to H^1_0(\Omega)$ denote the
  linearization of \eqref{e:petvia_iter} about $\phi$.

  There exists a neighborhood, $\mathcal{N}$, of $\phi$ in $H^1_0$ and
  a constant $\theta\in (0,1)$ such that for all
  $u_0 \in \mathcal{N}$, the sequence of iterates,
  $\{u_n\}_{n=0}^\infty$, defined through \eqref{e:petvia_iter}
  satisfies
  \begin{equation*}
    \norm{u_n - \phi}_{H^1} \leq \theta^n \norm{u_0 - \phi}_{H^1}.
  \end{equation*}
\end{theorem}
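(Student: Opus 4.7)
The plan is a Banach contraction argument about the fixed point $\phi$ of $\calA$: I would show that the linearization $\calA'(\phi)$ has operator norm strictly less than one on $H^1_0$, and then upgrade to a nonlinear contraction via Taylor expansion.

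The first step is to compute $\calA'(\phi)$ explicitly. Writing $\calA(u) = M[u]^\gamma f(u)$ with $f(u) = (I-\Delta)^{-1}(\abs{u}^{p-1}u)$, and using that $\phi$ solves \eqref{e:boundstate} (so $f(\phi)=\phi$, $M[\phi]=1$), a short product-rule calculation gives
\begin{equation*}
  \calA'(\phi)\, h = L h + \gamma(1-p)\,\Pproj_\phi h,
\end{equation*}
where $L = (I-\Delta)^{-1}(p\abs{\phi}^{p-1}\bullet)$ and $\Pproj_\phi$ is the orthogonal projection onto $\spn(\phi)$ for the inner product $\inner{u}{v}_{H^1} = \inner{(I-\Delta)u}{v}_{L^2}$. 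The identification of $M'(\phi)$ as a scalar multiple of $\Pproj_\phi$ relies on the identity $\abs{\phi}^{p-1}\phi = (I-\Delta)\phi$, which converts the $L^2$-pairing appearing in $M'(\phi)$ into an $H^1$-pairing against $\phi$.

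Next, I would read off the spectrum of $\calA'(\phi)$ in this Hilbert-space structure. With the above inner product, $L$ is self-adjoint and nonnegative (since $\inner{Lh}{h}_{H^1} = p\,\norm{\abs{\phi}^{(p-1)/2}h}_{L^2}^{2}$), and compact by the subcritical Sobolev embedding. A direct computation gives $L\phi = p\phi$, and Assumption \ref{a:spectral} then forces an orthogonal decomposition $L = p\Pproj_\phi + L_\perp$ with $\sigma(L_\perp)\subset[0,1)$. Since $\Pproj_\phi$ and $L_\perp$ act on orthogonal invariant subspaces,
\begin{equation*}
  \calA'(\phi) = \bracket{p - \gamma(p-1)}\Pproj_\phi + L_\perp,
\end{equation*}
so $\calA'(\phi)$ is self-adjoint with $\norm{\calA'(\phi)}_{H^1\to H^1} = \max\!\paren{\abs{p-\gamma(p-1)},\ \norm{L_\perp}} =: \theta_0$. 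Assumption \ref{a:gamma} is exactly what places the $\phi$-eigenvalue $p-\gamma(p-1)$ in $(-1,1)$, giving $\theta_0 < 1$.

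The closing step is standard. Using Assumption \ref{a:subcrit} and the embedding $H^1_0 \hookrightarrow L^{p+1}$, the map $u \mapsto \abs{u}^{p-1}u$ is $C^1$ from $H^1_0$ into $L^{(p+1)/p}\subset H^{-1}$; elliptic regularity gives $(I-\Delta)^{-1}:H^{-1}\to H^1_0$ boundedly; and $M[u]^\gamma$ is $C^1$ and positive near $\phi$. Hence $\calA:H^1_0\to H^1_0$ is $C^1$ near $\phi$ with Taylor remainder $\norm{R(h)}_{H^1} = o(\norm{h}_{H^1})$. Fixing $\theta\in(\theta_0,1)$ and choosing $\mathcal{N}$ a small enough ball about $\phi$ so the remainder is bounded by $(\theta-\theta_0)\norm{u-\phi}_{H^1}$ throughout $\mathcal{N}$ yields $\norm{\calA(u)-\phi}_{H^1}\leq\theta\norm{u-\phi}_{H^1}$ for all $u\in\mathcal{N}$; this makes $\mathcal{N}$ forward-invariant, and iterating delivers the claimed geometric bound. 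The main obstacle is the spectral step: one has to confirm self-adjointness of $\calA'(\phi)$ in the correct inner product and that the rank-one piece $\gamma(1-p)\Pproj_\phi$ is supported exactly on the $p$-eigenspace of $L$, so that Assumptions \ref{a:spectral} and \ref{a:gamma} together squeeze the full spectrum strictly inside the unit disk; once this alignment is in place, the $C^1$-regularity and fixed-point arguments are routine.
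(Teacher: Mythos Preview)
Your proposal is correct and follows essentially the same route as the paper: compute $\calA'(\phi)=L-\gamma(p-1)\Pproj_\phi$, use self-adjointness and compactness of $L$ in the $H^1$ inner product together with Assumptions~\ref{a:spectral} and~\ref{a:gamma} to get $\norm{\calA'(\phi)}_{H^1\to H^1}=\max\{|p-\gamma(p-1)|,\mu_\star\}<1$, and then close nonlinearly. The only cosmetic difference is in this last step: the paper invokes continuity of $u\mapsto\calA'(u)$ in operator norm and the mean value inequality to get $\sup_{w\in\calN}\norm{\calA'(w)}\leq(1+\eta)/2<1$, whereas you use the Taylor remainder at $\phi$ directly; both are standard and equivalent here.
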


Our second main result is:

\begin{theorem}[Subsequential Global Convergence]
  \label{t:global}

  Assume Assumptions \ref{a:domain} and \ref{a:subcrit} hold and that
  $u_0$ is any nontrivial $H^1_0$ function.  Also assume that
  $\gamma =\gamma_\star$, defined in \eqref{e:gamma_star}.

  Let $\{u_n\}_{n=0}^\infty$ be a sequence of functions generated by
  \eqref{e:petvia_iter}.

  \begin{itemize}

  \item {Assume $d\geq 2$.  There exists a subsequence $\{u_{n_k}\}$
      that converges strongly in $H^1_0$ to a nontrivial strong
      solution of \eqref{e:boundstate} in $W^{2, 2}$ for $d=2$ and to
      a strong solution in $W^{2,\kappa}$ for $d \geq 3$ where
      $\kappa = \min\left\{2, \frac{2^*}{p}\right\}$. The Sobolev
      conjugate, $2^*$, is defined as $2^* = \frac{2d}{d-2}$.}

  \item Assume either $d=1$ or $d\geq 2$ with $\partial \Omega$ is
    smooth. Then, for all $\ell \in 2\mathbb{Z}^+$, there exists a
    strongly convergent subsequence in $C^\ell$, and its limit,
    $\phi\in C^\infty$, is a nontrivial solution of
    \eqref{e:boundstate}.

  \end{itemize}

\end{theorem}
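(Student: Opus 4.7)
The plan is to identify a Lyapunov functional that decreases along the iteration at $\gamma=\gamma_\star$, extract a strongly $H^1_0$-convergent subsequence via Rellich--Kondrachov compactness, pass to the limit to obtain a nontrivial weak solution, and then bootstrap elliptic regularity. A natural candidate for the ``natural energy'' alluded to in the abstract is the scale-invariant Rayleigh quotient
\[
\mathcal{J}(u) := \frac{\norm{u}_{H^1}^{2}}{\norm{u}_{L^{p+1}}^{2}},
\]
whose infimum $\mathcal{J}_* > 0$ over nontrivial $H^1_0$ functions is the Sobolev/Nehari constant attained at a ground state. Crucially, the choice $\gamma = \gamma_\star$ makes $\calA$ itself scale invariant: $\calA(\lambda u) = \calA(u)$ for $\lambda > 0$, since the exponents on $\lambda$ in $M[\lambda u]^{\gamma_\star}$ and $v(\lambda u)$ cancel.

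Setting $v_n := (I-\Delta)^{-1}(\abs{u_n}^{p-1}u_n)$, so that $u_{n+1} = M[u_n]^{\gamma_\star} v_n$ and $\mathcal{J}(u_{n+1}) = \mathcal{J}(v_n)$ by homogeneity, I would derive two elementary estimates. Cauchy--Schwarz in the $H^1$-inner product applied to $\norm{u_n}_{L^{p+1}}^{p+1} = \inner{v_n}{u_n}_{H^1}$ gives $\norm{v_n}_{H^1}^{2} \geq \norm{u_n}_{L^{p+1}}^{2(p+1)} / \norm{u_n}_{H^1}^{2}$; H\"older applied to $\norm{v_n}_{H^1}^{2} = \inner{\abs{u_n}^{p-1}u_n}{v_n}$ gives $\norm{v_n}_{L^{p+1}} \geq \norm{v_n}_{H^1}^{2} / \norm{u_n}_{L^{p+1}}^{p}$. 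Combining these yields
\[
\mathcal{J}(u_{n+1}) \leq \mathcal{J}(u_n),
\]
with equality iff $u_n$ is a scalar multiple of a nontrivial solution of \eqref{e:boundstate}. The same estimates, together with scale invariance, furnish matching two-sided bounds $0 < c \leq \norm{u_n}_{H^1} \leq C$ for all $n \geq 1$, and analogously for $\norm{u_n}_{L^{p+1}}$.

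Monotonicity plus $\mathcal{J}_* \leq \mathcal{J}(u_n) \leq \mathcal{J}(u_0)$ give $\mathcal{J}(u_n) \searrow \mathcal{J}_\infty$. Uniform $H^1_0$-boundedness and reflexivity produce $u_{n_k} \rightharpoonup \phi$ in $H^1_0$; by Rellich--Kondrachov (Assumption~\ref{a:subcrit}), $u_{n_k} \to \phi$ strongly in $L^{p+1}$, and dominated convergence gives $\abs{u_{n_k}}^{p-1}u_{n_k} \to \abs{\phi}^{p-1}\phi$ in $L^{(p+1)/p}$, whence $v_{n_k} \to v(\phi) := (I-\Delta)^{-1}(\abs{\phi}^{p-1}\phi)$ strongly in $H^1_0$. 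Applying the Lyapunov inequality to $\phi$ gives $\mathcal{J}_\infty = \lim \mathcal{J}(u_{n_k+1}) = \mathcal{J}(v(\phi)) \leq \mathcal{J}(\phi)$, while weak lower semicontinuity together with $\lim \norm{u_{n_k}}_{H^1}^{2} = \mathcal{J}_\infty \norm{\phi}_{L^{p+1}}^{2}$ gives $\mathcal{J}(\phi) \leq \mathcal{J}_\infty$; hence $\mathcal{J}(\phi) = \mathcal{J}_\infty$, and the equality case forces $\phi$ to be a scalar multiple of a nontrivial solution $\phi_\star$. The matching norm convergence $\norm{u_{n_k}}_{H^1}^{2} \to \norm{\phi}_{H^1}^{2}$ then upgrades $u_{n_k} \to \phi$ to strong $H^1_0$-convergence, and continuity of $\calA$ on strongly convergent nontrivial sequences gives $u_{n_k+1} = \calA(u_{n_k}) \to \calA(\phi) = \phi_\star$ strongly in $H^1_0$---the sought-for subsequence converging to an actual nontrivial solution (the scale invariance of $\calA$ at $\gamma_\star$ collapsing all scalings of $\phi$ to the single solution $\phi_\star$). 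The uniform lower bound on $\norm{u_n}_{H^1}$ excludes $\phi_\star \equiv 0$.

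Elliptic regularity for the Dirichlet problem on the convex (hence Lipschitz) domain $\Omega$ upgrades $\phi_\star$ from weak to strong: for $d=2$, $H^1 \hookrightarrow L^q$ for every $q < \infty$ puts the nonlinearity in $L^2$ and gives $\phi_\star \in W^{2,2}$; for $d \geq 3$, $\phi_\star \in L^{2^*}$ yields $\abs{\phi_\star}^{p-1}\phi_\star \in L^{2^*/p}$, hence $\phi_\star \in W^{2,\kappa}$ with $\kappa = \min\{2, 2^*/p\}$. When $\partial\Omega$ is smooth (or $d=1$), an iterated bootstrap alternating $L^q$-elliptic regularity with Sobolev embeddings promotes $\phi_\star$ to $C^\infty$; propagating these gains uniformly along the iteration and invoking compact Sobolev embeddings together with Arzel\`a--Ascoli then upgrades the $H^1_0$-subsequential convergence to $C^\ell$-convergence for every even $\ell$. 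The most delicate step, in my view, is the equality-case / limit-passing in the previous paragraph---pinning down the subsequential limit as an \emph{actual} solution rather than merely a scaled solution---which rests on the clean interplay between scale invariance of $\calA$ at $\gamma_\star$, the Lyapunov equality characterization, and the strong-convergence upgrade via norm convergence.
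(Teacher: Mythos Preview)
Your argument is correct and takes a genuinely different route from the paper's. Both proofs begin from the same monotonicity---your $\mathcal{J}(u_{n+1})\le \mathcal{J}(u_n)$ is exactly the square of the paper's ``reverse Sobolev'' inequality $\norm{u_{n+1}}_{H^1}/\norm{u_{n+1}}_{L^{p+1}}\le \norm{u_{n}}_{H^1}/\norm{u_{n}}_{L^{p+1}}$---and both exploit the special exponent $\gamma_\star$ to obtain uniform two-sided $H^1$ bounds. The divergence is in how the subsequential limit is identified as a solution. The paper first proves two additional \emph{full-sequence} facts, $M[u_n]\to 1$ (via a telescoping product and a sign argument on $\log M[u_k]$) and $\norm{u_{n+1}-u_n}_{H^1}\to 0$, then uses uniform $W^{2,\kappa}$ bounds on the iterates and Rellich to extract a strongly $H^1$-convergent subsequence, and passes to the limit in the weak formulation. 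You instead use only weak-$H^1$/strong-$L^{p+1}$ compactness, pin down the limit through the \emph{equality case} of the Lyapunov inequality (forcing $v(\phi)=c\phi$), and then exploit the scale invariance $\calA(\lambda u)=\calA(u)$ at $\gamma_\star$ so that one further application of $\calA$ lands the shifted subsequence $\{u_{n_k+1}\}$ on an honest solution; the strong $H^1$ convergence of $u_{n_k}$ comes for free from norm convergence rather than from $W^{2,\kappa}$ compactness.

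Your route is more economical for the theorem as stated and avoids the telescoping-product machinery, at the price of not establishing $M[u_n]\to 1$ or $\norm{u_{n+1}-u_n}_{H^1}\to 0$ along the full sequence---information the paper obtains and uses (e.g., in the smooth-boundary Step~4 and in Corollary~\ref{c:combo}). For the $C^\ell$ subsequential convergence in the smooth-boundary case, your sketch (``propagating these gains uniformly along the iteration'') would still need the uniform higher-regularity bounds that the paper proves by its bootstrap on the iterates (Proposition~\ref{p.regseq}); once those are in hand, extracting a $C^\ell$-convergent sub-subsequence whose limit agrees with $\phi_\star$ is straightforward.
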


Combining these two results, we have:
\begin{corollary}
  \label{c:combo}
  If the limit obtained in Theorem \ref{t:global} satisfies Assumption
  \ref{a:spectral}, then the entire sequence converges to $\phi$.
\end{corollary}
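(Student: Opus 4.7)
The plan is to combine Theorems \ref{t:global} and \ref{t:local} directly, exploiting the fact that \eqref{e:petvia_iter} is a deterministic map $u_{n+1}=\calA(u_n)$ depending only on the current iterate, so the algorithm can be \emph{restarted} from any point along its trajectory without altering the subsequent terms.

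First, I would check that the hypotheses of the corollary entail the hypotheses of both main theorems. Theorem \ref{t:global} applies and produces a subsequence $\{u_{n_k}\}$ converging strongly in $H^1_0$ to a nontrivial solution $\phi$ of \eqref{e:boundstate}. The prescribed value $\gamma = \gamma_\star = p/(p-1)$ lies in the admissible interval of Assumption \ref{a:gamma}, and by assumption $\phi$ satisfies Assumption \ref{a:spectral}. Hence Theorem \ref{t:local} is applicable at $\phi$ and supplies an $H^1_0$-neighborhood $\mathcal{N}$ of $\phi$ together with a constant $\theta \in (0,1)$ such that any iteration sequence with initial datum in $\mathcal{N}$ converges geometrically to $\phi$ at rate $\theta^n$.

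Next, since $u_{n_k} \to \phi$ strongly in $H^1_0$, I would choose $k_0$ large enough that $u_{n_{k_0}} \in \mathcal{N}$. The identity $u_{n+1}=\calA(u_n)$ means that the tail $\{u_{n_{k_0}+m}\}_{m \geq 0}$ of the original sequence is identical to the sequence one obtains by running \eqref{e:petvia_iter} with initial term $u_{n_{k_0}}$. Applying Theorem \ref{t:local} to this restarted sequence yields
\[
\norm{u_{n_{k_0}+m}-\phi}_{H^1} \;\leq\; \theta^m\, \norm{u_{n_{k_0}}-\phi}_{H^1}
\]
for every $m \geq 0$, which forces $u_n \to \phi$ in $H^1_0$ along the \emph{entire} sequence, not just along the extracted subsequence.

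There is no genuine obstacle here; the only point requiring a moment of care is the passage from subsequential to full-sequence convergence, which is legitimate precisely because the iteration is Markovian and every iterate past $u_{n_{k_0}}$ is completely determined by $u_{n_{k_0}}$. Otherwise the argument is a clean concatenation of the two theorems already in hand.
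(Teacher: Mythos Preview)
Your proposal is correct and is essentially the same argument as the paper's: use Theorem~\ref{t:global} to land some iterate $u_{n_{k_0}}$ inside the basin of attraction furnished by Theorem~\ref{t:local}, and then restart the (deterministic) iteration from there to conclude full-sequence convergence. You have simply written out more of the details than the paper does.
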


Thus, we have a partial answer towards the question of the robustness
of the algorithm with respect ot the choice of $u_0$.

The proof of Theorem \ref{t:local} is based on the result in
\cite{Pelinovsky:2004bv} for the problem on $\R$.  See
\cite{Chugunova:2007wg,Demanet:2006gi,Lakoba:2007cg} for related
analysis of the algorithm.  Theorem \ref{t:global} is proven by
compactness methods, and we believe it to be novel.

\subsection{Outline}

Our paper is organized as follows.  In Section \ref{s:prelim}, we
review some preliminary results on the existence and regularity of
nonlinear bound states, the well-posedness of the iteration scheme,
and important spectral properties {of the iteration operator}.  In
Section \ref{s:local}, we prove Theorem \ref{t:local}, and a proof of
Theorem \ref{t:global} is given in Section \ref{s:global}.  Example
computations appear in Section \ref{s:examples} followed by a
discussion in Section \ref{s:disc}.

\section{Preliminary Results}
\label{s:prelim}

In this section, we provide some definitions, review some properties
of the bound states, and examine the iteration scheme.  For brevity,
we define
\begin{equation*}
  \label{e:calLdef}
  \calL \equiv  I - \Delta.
\end{equation*}
As an inner product on $H^1_0(\Omega)$, we use
\begin{equation*}
  \label{e:h1inner}
  \inner{f}{g}_{H^1} = \inner{\calL f}{g} =\inner{f}{\calL g} = \int_\Omega  fg +
  \nabla f \cdot \nabla g.
\end{equation*} 
Weak solutions of \eqref{e:boundstate} are functions,
$\phi\in H^1_0(\Omega)$ such that for all $v \in H^1_0$,
\begin{equation*}
  \label{e:weaksoln}
  \int_\Omega \nabla \phi \cdot \nabla v + \phi v +  \abs{\phi}^{p-1}\phi v = 0
\end{equation*}
Relating this to the iteration scheme, we have the following
elementary result:
\begin{lemma}
  If $u\in H^1_0$ is a fixed point of \eqref{e:petvia_iter} and
  $M[u]=1$, then $u$ is a weak solution of \eqref{e:boundstate}.
\end{lemma}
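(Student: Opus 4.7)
The proof is essentially algebraic. The plan is to unfold the fixed-point condition using the hypothesis $M[u]=1$ and then recognize the resulting identity as the weak formulation of \eqref{e:boundstate}. First, I would note that for $u\in H^1_0(\Omega)$, Assumption \ref{a:subcrit} combined with the Sobolev embedding $H^1_0 \hookrightarrow L^{p+1}$ guarantees $\abs{u}^{p-1}u \in L^{(p+1)/p}(\Omega) \subset H^{-1}(\Omega)$, so the right-hand side of \eqref{e:petvia_iter} is well-defined, with $\calL^{-1}(\abs{u}^{p-1}u) \in H^1_0(\Omega)$ by standard elliptic theory on the convex bounded domain $\Omega$.

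Next, the fixed-point hypothesis $u = \calA(u)$ combined with $M[u]=1$ collapses \eqref{e:petvia_iter} to $u = \calL^{-1}(\abs{u}^{p-1}u)$, i.e., $\calL u = \abs{u}^{p-1}u$ in the sense of $H^{-1}$. Testing against an arbitrary $v\in H^1_0(\Omega)$ and invoking the $H^1$ inner product identity $\inner{\calL^{-1}f}{v}_{H^1} = \inner{f}{v}_{L^2}$ valid for $f \in H^{-1}$, I obtain
\begin{equation*}
  \int_\Omega \nabla u \cdot \nabla v + u v \,=\, \int_\Omega \abs{u}^{p-1} u\, v,
\end{equation*}
which is precisely the definition of a weak solution of \eqref{e:boundstate} given in the excerpt.

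There is no real obstacle here; the only subtlety is verifying that $|u|^{p-1}u$ lies in the dual space $H^{-1}$ so that $\calL^{-1}$ may be applied, which is guaranteed by the subcritical growth from Assumption \ref{a:subcrit}. Everything else is a direct rearrangement of the fixed-point relation.
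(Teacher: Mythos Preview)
Your proof is correct and follows essentially the same approach as the paper: both use $M[u]=1$ to reduce the fixed-point relation to $\calL u = \abs{u}^{p-1}u$ in $H^{-1}$, which is the weak formulation. Your version is simply more explicit about why $\abs{u}^{p-1}u \in H^{-1}$ and about passing to the tested form, whereas the paper dispatches the lemma in a single sentence by invoking the trivial kernel of $\calL$.
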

\begin{proof}
  Since $\calL :H^1_0 \to H^{-1}$ has trivial kernel, $\calL$ can be
  applied to both sides of \eqref{e:petvia_iter} to obtain
  $\calL u = \abs{u}^{p-1}u$, with equality holding in $H^{-1}$.
\end{proof}

Existence of weak, nontrivial solutions subject to Assumptions
\ref{a:domain} and \ref{a:subcrit} is well established and can be
found in, for example, \cite{Cazenave:aa,Badiale:2011ug}.  If we
assume that $\partial \Omega$ is $C^\infty$, then, using bootstrap
methods and elliptic regularity theory, $\phi$ will also be smooth.

\subsection{Properties of the Iteration Method}

Before proceeding with an analysis of the algorithm, we need to
establish certain properties.

\begin{proposition}
  \label{p:wellposed}
  Assume Assumptions \ref{a:domain} and \ref{a:subcrit} hold.  If
  $u \in H^1_0$ and $u\neq 0$, then $\calA(u) \in H^1_0$ and
  $\calA(u) \neq 0$.
\end{proposition}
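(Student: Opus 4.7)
The plan is to verify three things: (i) the scalar $M[u]$ is well-defined and strictly positive, so $M[u]^\gamma$ makes sense; (ii) $(I-\Delta)^{-1}(|u|^{p-1}u)$ lies in $H^1_0$; and (iii) neither factor vanishes, so the product $\calA(u)$ is nonzero. Everything hinges on the subcritical Sobolev embedding $H^1_0(\Omega) \hookrightarrow L^{p+1}(\Omega)$, which holds because $p+1 < 2^*$ (or arbitrary for $d=1,2$) under Assumptions \ref{a:domain} and \ref{a:subcrit}.

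For (i), I would note that the numerator of $M[u]$ is $\norm{u}_{H^1}^2$, which is strictly positive since $u \neq 0$ in $H^1_0$. For the denominator, $\inner{|u|^{p-1}u}{u} = \norm{u}_{L^{p+1}}^{p+1}$; this is finite by Sobolev embedding and strictly positive since a nonzero $H^1_0$ function is nonzero on a set of positive measure. Thus $M[u] \in (0,\infty)$ and $M[u]^\gamma$ is well-defined and positive.

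For (ii), I would show that $|u|^{p-1}u \in H^{-1}(\Omega)$ so that $\calL^{-1}$ applied to it lands in $H^1_0$. Indeed, $|u|^{p-1}u \in L^{(p+1)/p}(\Omega)$ because $\int |u|^{p(p+1)/p} = \int |u|^{p+1} < \infty$, and $(p+1)/p$ is the Hölder conjugate of $p+1$. Hence by the chain of embeddings $L^{(p+1)/p} \hookrightarrow (L^{p+1})^* \hookrightarrow (H^1_0)^* = H^{-1}$, we obtain $|u|^{p-1}u \in H^{-1}$, and $\calL^{-1}: H^{-1} \to H^1_0$ is an isomorphism.

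For (iii), since $M[u]^\gamma > 0$, it suffices that $\calL^{-1}(|u|^{p-1}u) \neq 0$. But $|u|^{p-1}u$ is nonzero as an element of $H^{-1}$ (it pairs with $u$ itself to give $\norm{u}_{L^{p+1}}^{p+1} > 0$), and $\calL^{-1}$ has trivial kernel, so the result follows. I do not expect any real obstacle here; this is essentially a bookkeeping statement to ensure the iteration \eqref{e:petvia_iter} is an actual map from $H^1_0 \setminus \{0\}$ to itself, and the only subtlety worth mentioning explicitly is the Hölder-conjugate calculation that places $|u|^{p-1}u$ in $H^{-1}$.
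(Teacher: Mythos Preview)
Your proposal is correct and follows essentially the same approach as the paper: Sobolev embedding gives $u\in L^{p+1}$ so $M[u]$ is finite and positive, $|u|^{p-1}u\in H^{-1}$ so $\calL^{-1}$ maps it into $H^1_0$, and positivity of $M[u]$ together with injectivity of $\calL^{-1}$ gives $\calA(u)\neq 0$. If anything, you are more explicit than the paper in spelling out the H\"older-conjugate step $|u|^{p-1}u\in L^{(p+1)/p}\hookrightarrow H^{-1}$ and in justifying why $\calL^{-1}(|u|^{p-1}u)\neq 0$, which the paper leaves implicit.
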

\begin{proof}
  By the standard Sobolev embeddings, since $u \in H^1_0$,
  $u \in L^{p+1}$.  Therefore the numerator and denominator of $M[u]$
  are both finite.  In addition, since $u \neq 0$, the denominator is
  nonzero.  Hence, $M[u]$ is well defined.  Next, observe that
  $\abs{u}^{p-1} u \in H^{-1}$. Finally, we have that
  $\calL^{-1}(\abs{u}^{p-1} u) \in H^1_0$; hence,
  $\calA(u) \in H^1_0$.  Since $M[u] >0$, we must have that
  $\calA(u) \neq 0$.
\end{proof}

This implies that the iteration scheme is well defined in the sense
that, though $\norm{u_n}_{H^1}$ could tend to infinity as
$n\to \infty$, $u_{n+1}\in H^1_0$ can always be computed from $u_n$.

\begin{proposition}
\label{p:lin_op}
  Assume Assumptions \ref{a:domain} and \ref{a:subcrit} hold and that
  $\gamma>1$.  Then about any $u \in H^1_0$, $u\neq 0$:
  \begin{itemize}
  \item $\calA$ is continuous;
  \item $\calA$ is Fr\'echet differentiable;
  \item The Fr\'echet derivative, $\calA'(\bullet):H^1_0\to L(H_0^1)$,
    is continuous with respect to the $H^1$ operator norm.
  \end{itemize}
  The Fr\'echet derivative is given by:
  \begin{subequations}
    \label{e:frechet}
    \begin{align*}
      \calA'(u)h &= M[u]^\gamma\calL^{-1} ({p \abs{u}^{p-1} h}) +
                   \gamma
                   \frac{[M'[u],h]}{M[u]}\calA(u),\\
      [M'[u],h] & = \frac{1}{\norm{u}_{L^{p+1}}^{{p+1}}}\bracket{2
                  \inner{u}{h}_{H^1}- (p+1) M[u] \inner{\abs{u}^{p-1}u}{h}}.
    \end{align*}
  \end{subequations}
\end{proposition}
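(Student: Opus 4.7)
The plan is to decompose $\calA(u) = M[u]^\gamma\,\calL^{-1} F(u)$ with $F(u) = \abs{u}^{p-1}u$, verify continuity and Fr\'echet $C^1$ regularity of each factor separately, and then assemble via the product and chain rules. Every factor except $F$ is either linear and continuous, smooth on an open subset of $\R$, or a continuous bilinear form, so the one genuinely analytic step is the Fr\'echet differentiability of the Nemytskii map $F$.

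The main obstacle is therefore showing that $F : L^{p+1}(\Omega) \to L^{(p+1)/p}(\Omega)$ is Fr\'echet $C^1$ with derivative $[F'(u),h] = p\abs{u}^{p-1} h$. Boundedness of the candidate derivative follows from H\"older's inequality since $\abs{u}^{p-1} \in L^{(p+1)/(p-1)}$ whenever $u \in L^{p+1}$. Fr\'echet differentiability rests on elementary pointwise estimates of the form
\[
\bigl|\abs{a+b}^{p-1}(a+b) - \abs{a}^{p-1}a - p\abs{a}^{p-1}b\bigr| \leq C(\abs{a} + \abs{b})^{p-2}\abs{b}^{2}\quad (p\geq 2),
\]
together with an analogous H\"older-type bound of order $1+(p-1)$ for $1 < p < 2$, combined with dominated convergence; this is the standard $C^1$ Nemytskii result, whose hypotheses are guaranteed by Assumption \ref{a:subcrit}. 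Composing with the subcritical embedding $H^1_0 \hookrightarrow L^{p+1}$, its dual $L^{(p+1)/p} \hookrightarrow H^{-1}$, and the continuous isomorphism $\calL^{-1} : H^{-1} \to H^1_0$, one obtains that $u \mapsto \calL^{-1} F(u)$ is $C^1$ from $H^1_0$ into itself, with derivative $h\mapsto \calL^{-1}(p\abs{u}^{p-1} h)$.

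For $M[u]$, the numerator $\inner{\calL u}{u} = \norm{u}_{H^1}^{2}$ is a continuous quadratic form with derivative $2\inner{u}{\cdot}_{H^1}$, while the denominator $\inner{F(u)}{u} = \norm{u}_{L^{p+1}}^{p+1}$ is $C^1$ by the preceding step paired against the bilinear form on $L^{(p+1)/p}\times L^{p+1}$, with derivative $(p+1)\inner{\abs{u}^{p-1}u}{h}$. Since $u \neq 0$ the denominator is strictly positive and locally bounded below, so $M$ is $C^1$ with exactly the derivative recorded in the statement, and $t \mapsto t^\gamma$ is $C^\infty$ on $(0,\infty)$, yielding $u \mapsto M[u]^\gamma$ as $C^1$. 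The product rule for the scalar action $\R\times H^1_0 \to H^1_0$ then gives the announced formula for $\calA'(u)$. Continuity of $\calA'(\cdot) : H^1_0 \to L(H^1_0)$ reduces to continuity of $u \mapsto p\abs{u}^{p-1}(\cdot)$ as a map $H^1_0 \to L(L^{p+1}, L^{(p+1)/p})$, which is another standard Nemytskii fact, together with the evident continuity of $M[u]^\gamma$ and $[M'[u],\cdot]$ read off from their explicit expressions; continuity of $\calA$ itself is then immediate from its differentiability and the local boundedness of $\calA'$.
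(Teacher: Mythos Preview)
Your proposal is correct and is precisely the standard argument the paper has in mind; the paper itself omits the proof entirely, writing only ``The proof of this is standard, and, for brevity, we omit it.'' Your decomposition via the Nemytskii map $F(u)=\abs{u}^{p-1}u$ between the appropriate Lebesgue spaces, together with the subcritical Sobolev embeddings and the quotient/product/chain rules for $M[u]^{\gamma}$, is exactly the expected route and yields the stated formula for $\calA'(u)$.
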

\begin{proof}
  The proof of this is standard, and, for brevity, we omit it.
\end{proof}

\subsection{Spectral Results}
\label{s:spectral}

At a nontrivial solution $\phi$ of \eqref{e:boundstate},
$M[\phi] = 1$, $\calL \phi = \abs{\phi}^{p-1}\phi$, and
$\calA(\phi) = \phi$.  Therefore the linearization at $\phi$
simplifies to:
\begin{equation}
  \label{e:frechetphi2}
  \calA'(\phi)h = \calL^{-1}(p\abs{\phi}^{p-1} h) - \gamma(p-1) \frac{\inner{\phi}{h}_{H^1}}{\inner{\phi}{\phi}_{H^1}}\phi.
\end{equation}
Equation \eqref{e:frechetphi2} is quite informative as to the local
behavior of the algorithm.  Returning to \eqref{e:boundstate}, one
might ask why not apply the iteration scheme
\begin{equation}
  \label{e:naiveiter}
  u_{n+1} =\calL^{-1}(\abs{u_n}^{p-1}u_n).
\end{equation}
Linearizing \eqref{e:naiveiter} about $\phi$, we obtain
\[
\calL^{-1}(p \abs{\phi}^{p-1} h),
\]
which is the first term in \eqref{e:frechetphi2}.  Observe that this
has an eigenvalue with modulus in excess of unity:
\[
\calL^{-1}(p \abs{\phi}^{p-1} \phi) = p \phi.
\]
Thus, there is a linearly unstable mode, and \eqref{e:naiveiter} will
diverge.  In \eqref{e:frechetphi2}, a rank one orthogonal projection
is added, shifting the unstable eigenvalue into the interior of the
unit disk.  Indeed,
\begin{equation}
  \calA'(\phi)\phi = p \phi - \gamma(p-1)\phi = \paren{p - \gamma(p-1)}\phi.
\end{equation}
Assumption \ref{a:gamma} is precisely what ensures that the eigenvalue
corresponding to $\phi$ lies in the interval
$-1 < p - \gamma(p-1) < 1$.

Next, we have:
\begin{proposition}
  \label{p:compactness}
  $\calL^{-1}(p\abs{\phi}^{p-1}\bullet):H^1_0\to H^1_0$ is compact.
\end{proposition}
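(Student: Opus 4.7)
The plan is to write $\calL^{-1}(p|\phi|^{p-1}\bullet)$ as the composition of a compact embedding with bounded operators. Concretely, I factor the map as
\[
H^1_0(\Omega) \xrightarrow{\;\iota\;} L^2(\Omega) \xrightarrow{\;M_\phi\;} L^2(\Omega) \hookrightarrow H^{-1}(\Omega) \xrightarrow{\;\calL^{-1}\;} H^1_0(\Omega),
\]
where $\iota$ is the natural inclusion, $M_\phi h := p|\phi|^{p-1}h$, and $\calL^{-1}$ is the solution operator for the Dirichlet problem $\calL u = f$. The inclusion $\iota$ is compact by the Rellich--Kondrachov theorem, since $\Omega$ is bounded with Lipschitz boundary (Assumption \ref{a:domain}). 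The operators $\calL^{-1}:H^{-1}\to H^1_0$ (by Lax--Milgram) and $L^2\hookrightarrow H^{-1}$ are bounded. Since a composition involving a compact operator is compact, the conclusion will follow once $M_\phi$ is shown to be bounded on $L^2$.

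The main obstacle — really the only technical point — is that $M_\phi$ is bounded as a map $L^2\to L^2$, which amounts to $|\phi|^{p-1}\in L^\infty(\Omega)$. I will establish this by the standard bootstrap argument for subcritical semilinear elliptic equations: starting from $\phi\in H^1_0\hookrightarrow L^{2^*}$ (with the convention $2^*=\infty$ if $d\leq 2$), the identity $\calL\phi = |\phi|^{p-1}\phi$ together with subcriticality $p<d^\dagger$ (Assumption \ref{a:subcrit}) and elliptic $W^{2,q}$ regularity iteratively improves the integrability of $\phi$ until $\phi\in L^\infty(\Omega)$, by a Moser- or Brezis--Kato-type iteration. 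This is a classical argument; I would cite \cite{Cazenave:aa,Badiale:2011ug} for the ground state case and note that the same iteration applies to any $H^1_0$ weak solution of \eqref{e:boundstate}.

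Once $\phi\in L^\infty$ is in hand, $\|M_\phi h\|_{L^2} \leq p\|\phi\|_{L^\infty}^{p-1}\|h\|_{L^2}$, so $M_\phi$ is bounded. The composition above then has the structure \emph{compact $\circ$ bounded $\circ$ bounded $\circ$ bounded}, which is compact as an operator $H^1_0\to H^1_0$. This is exactly the claim of the proposition.
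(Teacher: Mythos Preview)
Your proof is correct but takes a genuinely different route from the paper's. The paper never uses $\phi\in L^\infty$; it works directly with $\phi\in H^1_0$ and shows the operator maps weakly convergent $H^1_0$ sequences to strongly convergent ones by estimating, via H\"older and Sobolev,
\[
\norm{\calL^{-1}\bigl(p|\phi|^{p-1}(u_{n_k}-u)\bigr)}_{H^1}\leq C\,\norm{u_{n_k}-u}_{L^{2^*/(2^*-p)}},
\]
and then invoking Rellich on the right-hand side, which applies because subcriticality gives $2^*/(2^*-p)<2^*$. Your factorization through $L^2$ is structurally cleaner but purchases that cleanliness with the extra input $\phi\in L^\infty$, which requires a regularity bootstrap. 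One caution: under the paper's minimal Assumption~\ref{a:domain} ($\Omega$ convex, hence only Lipschitz), elliptic $W^{2,q}$ regularity is only guaranteed for $q\leq 2$ (cf.\ the convex-domain lemma quoted later in the paper), so a pure $W^{2,q}$ bootstrap can stall in high dimensions; the Moser/Brezis--Kato iteration you also mention does go through, since it relies only on the weak formulation and the $H^1\hookrightarrow L^{2^*}$ embedding. Net effect: the paper's argument is more self-contained and uses less about $\phi$, while yours is more modular once the $L^\infty$ bound is in hand.
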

\begin{proof}
  Given any bounded sequence $\set{u_n}$ in $H^1_0$, it contains a
  weakly convergent subsequence, $\set{u_{n_k}}$, with limit
  $u\in H^1_0$.  Thus, to show that the operator is compact, it
  suffices to show that it maps weakly convergent $H^1$ sequences to
  strongly convergent ones.  Consider the case of $d\geq 3$, as
  $d=1,2$ are simpler.  First, we have
  \begin{equation*}
    \begin{split}
      &\norm{\calL^{-1}(p\abs{\phi}^{p-1}(u_{n_k} - u))}_{H^1}^2=
      \inner{\calL^{-1}(p\abs{\phi}^{p-1}(u_{n_k} -
        u))}{(p\abs{\phi}^{p-1}(u_{n_k} - u))}\\
      &\leq \norm{\calL^{-1}(p\abs{\phi}^{p-1}(u_{n_k} -
        u))}_{L^{2^*}}\norm{p\abs{\phi}^{p-1}}_{L^{2^* \over p-1}}
      \norm{u_{n_k} - u}_{L^{2^* \over 2^* - p}}.
    \end{split}
  \end{equation*}
  Since $H^1$ continuously embeds into $L^{2^*}$ and
  $L^{{2^*}\over{p-1}}$, the first two norms in the above expression
  are controlled by $H^1$.  Next, since $p$ is subcritical, satisfying
  \eqref{e:subcrit}, we have that ${2^*}/{(2^* - p)} < 2^*$;
  therefore,
  \[
  \norm{\calL^{-1}(p\abs{\phi}^{p-1}(u_{n_k} - u))}_{H^1}\leq C
  \norm{u_{n_k} - u}_{L^{2^* \over 2^* - p}}.
  \]
  By Rellich, the subsequence converges strongly in the
  $L^{2^* \over 2^* - p}$ topology, and we have the desired
  convergence.
\end{proof}

We can now summarize the spectrum of the operator
$\calL^{-1}(p\abs{\phi}^{p-1}\bullet)$:
\begin{theorem}[Spectrum of
  $\calL^{-1}(p\abs{\phi}^{p-1}\bullet)$]\label{t:spec}\quad
  \begin{enumerate}
  \item $\calL^{-1}(p\abs{\phi}^{p-1}\bullet)$ is self-adjoint with
    respect to the $H^1$ inner product and the spectrum is real.
  \item $\calL^{-1}(p\abs{\phi}^{p-1}\bullet):H^1_0\to H^1_0$ has a
    complete orthonormal basis, $\left\{\psi_j\right\}$, with
    $\calL^{-1}(p\abs{\phi}^{p-1} \psi_j) = \nu_j \psi_j$ and
    $\nu_j \to 0$ and $\nu_j\geq 0$.
  \item $\calL^{-1}(p\abs{\phi}^{p-1}\phi) = p\phi$.
  \end{enumerate}
\end{theorem}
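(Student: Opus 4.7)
The plan is to verify the three claims in order, exploiting the compactness from Proposition \ref{p:compactness} together with a short computation of the associated bilinear form. The key identity is
\[
\inner{\calL^{-1} w}{v}_{H^1} = \inner{\calL \calL^{-1} w}{v}_{L^2} = \inner{w}{v}_{L^2},
\]
valid for $w \in H^{-1}$ and $v \in H^1_0$; this is immediate from the definition of the $H^1_0$ inner product.

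For (1), I would apply this identity with $w = p\abs{\phi}^{p-1} f$, which lies in $H^{-1}$ by the same H\"older/Sobolev estimate used in the proof of Proposition \ref{p:compactness}, to get
\[
\inner{\calL^{-1}(p\abs{\phi}^{p-1}f)}{g}_{H^1} = \int_\Omega p\abs{\phi}^{p-1} f g,
\]
which is manifestly symmetric in $f, g \in H^1_0$. Self-adjointness on $H^1_0$ then forces the spectrum to be real. For (2), combine self-adjointness with the compactness from Proposition \ref{p:compactness} and invoke the standard spectral theorem for compact self-adjoint operators on a Hilbert space: this supplies a complete $H^1$-orthonormal basis $\set{\psi_j}$ of eigenfunctions with real eigenvalues $\nu_j$ that can accumulate only at $0$. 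Non-negativity follows from the same bilinear-form identity with $f=g=u$, namely
\[
\inner{\calL^{-1}(p\abs{\phi}^{p-1}u)}{u}_{H^1} = \int_\Omega p\abs{\phi}^{p-1} u^2 \geq 0,
\]
so the operator is positive semidefinite and every $\nu_j \geq 0$.

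For (3), the statement is a direct calculation: since $\phi$ solves \eqref{e:boundstate}, $\calL \phi = \abs{\phi}^{p-1}\phi$, so
\[
\calL^{-1}(p\abs{\phi}^{p-1}\phi) = p\,\calL^{-1}(\calL \phi) = p\phi.
\]
I do not anticipate a serious obstacle here. The only point requiring any care is checking that $p\abs{\phi}^{p-1} f \in H^{-1}$ for every $f\in H^1_0$, so that all of the pairings above are meaningful; but this is precisely the H\"older/Sobolev step already done in the proof of Proposition \ref{p:compactness}, and Assumption \ref{a:subcrit} is exactly what makes it go through.
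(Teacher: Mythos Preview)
Your proposal is correct and follows essentially the same route as the paper: self-adjointness via the identity $\inner{\calL^{-1}w}{v}_{H^1}=\inner{w}{v}_{L^2}$, then the spectral theorem for compact self-adjoint operators (using Proposition~\ref{p:compactness}), nonnegativity from the quadratic form $\int p\abs{\phi}^{p-1}u^2\geq 0$, and part (3) directly from $\calL\phi=\abs{\phi}^{p-1}\phi$. The paper is terser---it simply says ``self-adjointness follows by inspection'' and cites Reed--Simon for the spectral theorem---but the substance is identical.
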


\begin{proof}
  Self-adjointness follows by inspection, and this immediately implies
  that the spectrum must be real.  Since it is compact on $H^1_0$ and
  self-adjoint on the separable Hilbert space $H^1_0$, it has a
  complete orthonormal basis, and all nonzero eigenvalues have finite
  multiplicity; see Theorems VI.15 and VI.16 of \cite{Reed:1980aa}.
  As previously noted, $p$ is an eigenvalue with $\phi$ the
  corresponding eigenfunction.  Lastly, given any $\nu_j$, we compute,
  \begin{equation*}
    \nu_j = \nu_j \inner{\psi_j}{\psi_j}_{H^1} =
    \inner{\calL^{-1}(p\abs{\phi}^{p-1}
      \psi_j)}{\psi_j}_{H^1}= \inner{p\abs{\phi}^{p-1} \psi_j}{\psi_j} \geq 0.
  \end{equation*}
\end{proof}

We then immediately obtain results on the spectrum of
$\calA'(\phi)\bullet$:
\begin{corollary}
  $\calA'(\phi)\bullet: H^1_0 \to H^1_0$ is a compact.
\end{corollary}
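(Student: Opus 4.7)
The plan is very short: $\calA'(\phi)$ is an explicit sum of two operators, one of which is compact by Proposition \ref{p:compactness}, and the other of which is of finite rank, so I would just add these facts together.

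Concretely, from the simplified form of the linearization at $\phi$,
\[
\calA'(\phi)h = \calL^{-1}(p\abs{\phi}^{p-1} h) - \gamma(p-1) \frac{\inner{\phi}{h}_{H^1}}{\inner{\phi}{\phi}_{H^1}}\phi,
\]
I would first note that $h \mapsto \calL^{-1}(p\abs{\phi}^{p-1}h)$ is a compact operator on $H^1_0$, which is exactly the content of Proposition \ref{p:compactness}. Second, I would observe that the map
\[
h \mapsto -\gamma(p-1) \frac{\inner{\phi}{h}_{H^1}}{\inner{\phi}{\phi}_{H^1}}\phi
\]
is well-defined and bounded on $H^1_0$ (the functional $h \mapsto \inner{\phi}{h}_{H^1}$ is continuous by Cauchy--Schwarz in $H^1$, and $\inner{\phi}{\phi}_{H^1} \neq 0$ since $\phi$ is a nontrivial bound state), and its range is contained in the one-dimensional subspace $\spn\{\phi\}$. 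Hence it is a continuous operator of rank one, and in particular compact.

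Finally, since the space of compact operators on the Hilbert space $H^1_0$ is a linear subspace of $L(H^1_0)$, the sum of these two compact operators is compact, which establishes the claim. There is no real obstacle here; the corollary is essentially a bookkeeping statement that packages Proposition \ref{p:compactness} together with the rank-one correction seen in \eqref{e:frechetphi2}, and in fact it is this rank-one correction that plays the crucial role later (shifting the unstable eigenvalue of the naive scheme into the unit disk), so the only content beyond routine linearity is to verify that the correction term is genuinely bounded on $H^1_0$.
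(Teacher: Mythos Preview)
Your proposal is correct and follows exactly the same approach as the paper: the paper's proof simply notes that $\calA'(\phi)\bullet$ is a compact operator plus a rank one operator, hence compact. Your version just spells out the bookkeeping (boundedness of the rank-one projection) a bit more explicitly.
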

\begin{proof}
  Since $\calA'(\phi)\bullet$ is a compact operator plus rank one
  operator, it is compact.
\end{proof}

\begin{corollary}[Spectrum of $\calA'(\phi)$]\label{c:Aspec}\quad

  \begin{enumerate}
  \item $\calA'(\phi) \bullet$ is self adjoint with respect to the
    $H^1$ inner product, and the spectrum of $\calA'(\phi)\bullet$ is
    real.
  \item $\calA'(\phi) \bullet:H^1_0 \to H^1_0$ has a complete
    orthonormal basis, $\left\{\psi_j\right\}$, with
    $\calA'(\phi) \psi_j = \mu_j\psi_j$ and $\mu_j \to 0$.
  \item $\calA'(\phi)\phi = (p - (p-1)\gamma) \phi$.
  \item For all $\mu_j \neq p - (p-1)\gamma$, $\mu_j \geq 0$.
  \end{enumerate}

\end{corollary}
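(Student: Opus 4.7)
The plan is to leverage the explicit form \eqref{e:frechetphi2} of $\calA'(\phi)$, which decomposes it as
\[
\calA'(\phi) = T - \gamma(p-1)\, P_\phi,
\]
where $T h = \calL^{-1}(p\abs{\phi}^{p-1} h)$ is the compact self-adjoint operator analyzed in Theorem \ref{t:spec}, and $P_\phi h = \inner{\phi}{h}_{H^1}\phi / \inner{\phi}{\phi}_{H^1}$ is the $H^1$-orthogonal projection onto $\spn\{\phi\}$. Each of the four claims will follow by combining this decomposition with the spectral theorem for compact self-adjoint operators.

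For part (1), both $T$ and $P_\phi$ are manifestly self-adjoint with respect to the $H^1$ inner product ($T$ by Theorem \ref{t:spec}, $P_\phi$ since it is an orthogonal projection), so $\calA'(\phi)$ is self-adjoint and its spectrum is real. For part (2), the preceding corollary gives compactness of $\calA'(\phi)$; combining compactness, self-adjointness, and separability of $H^1_0$ yields a complete orthonormal basis $\{\psi_j\}$ of eigenfunctions with eigenvalues $\mu_j\to 0$, by the standard spectral theorem (again Theorems VI.15--VI.16 of \cite{Reed:1980aa}). Part (3) is a direct verification: since $P_\phi \phi = \phi$ and $T\phi = p\phi$ by Theorem \ref{t:spec}(3), we get $\calA'(\phi)\phi = (p - \gamma(p-1))\phi$, which was already noted in the text just before Proposition \ref{p:compactness}.

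The main content is part (4). The key observation is that the rank-one correction $P_\phi$ acts trivially on the $H^1$-orthogonal complement of $\phi$. Suppose $\calA'(\phi)\psi_j = \mu_j \psi_j$ with $\mu_j \neq p - (p-1)\gamma$. By self-adjointness and the identity in part (3),
\[
\mu_j \inner{\psi_j}{\phi}_{H^1} = \inner{\calA'(\phi)\psi_j}{\phi}_{H^1} = \inner{\psi_j}{\calA'(\phi)\phi}_{H^1} = (p-(p-1)\gamma)\inner{\psi_j}{\phi}_{H^1},
\]
so $\inner{\psi_j}{\phi}_{H^1}=0$, i.e.\ $P_\phi \psi_j = 0$. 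Consequently $T\psi_j = \calA'(\phi)\psi_j = \mu_j \psi_j$, and Theorem \ref{t:spec}(2) gives $\mu_j\geq 0$.

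I do not anticipate any serious obstacle: the only subtle point is the last orthogonality argument, and it amounts to the standard fact that self-adjoint eigenvectors corresponding to distinct eigenvalues are orthogonal, combined with the special algebraic structure of \eqref{e:frechetphi2}. The decomposition $\calA'(\phi) = T - \gamma(p-1)P_\phi$ is really doing all the work; once it is in hand, the proof is short and essentially mechanical.
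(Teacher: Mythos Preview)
Your proof is correct and follows essentially the same route as the paper: both exploit the decomposition $\calA'(\phi) = T - \gamma(p-1)P_\phi$ with $P_\phi$ the $H^1$-orthogonal projection onto the eigenspace of $T$ at eigenvalue $p$, so that the spectra of $T$ and $\calA'(\phi)$ agree except for the shift $p\mapsto p-(p-1)\gamma$. The paper simply asserts this eigenvalue correspondence, whereas you spell out the orthogonality argument for part (4) explicitly; the content is the same.
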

Assumption \ref{a:spectral} thus ensures that $p$ is the only linearly
unstable mode of $\calL^{-1} (p \abs{\phi}^{p-1}\bullet)$, which is
linearly stable for $\calA'(\phi) \bullet$.

\begin{proof}
  All of this follows from the last corollary and Theorem
  \ref{t:spec}.  The eigenvalues $\nu_j$ and $\mu_j$ relate to one
  another as
  \[
  \mu_j =\begin{cases}
    \nu_j,& \nu_j \neq p,\\
    p-(p-1)\gamma, & \nu_j = p.
  \end{cases}
  \]
  This follows from $\calA'(\phi)\bullet$ being a rank one
  perturbation, where the rank one term is an orthogonal spectral
  projection.

\end{proof}

\section{Local Convergence}
\label{s:local}

Given $v_0 \in H^1_0$, let $v_n$ denote the sequence generated by the
linearized operator:
\begin{equation}
  \label{e:linear_iter}
  v_{n+1} = \calA'(\phi) v_n.
\end{equation}
First, we decompose the iterates.
\begin{proposition}
  The sequence $v_n$ can be decomposed as $v_n = a_n \phi + w_n$,
  where $w_n \perp_{H^1} \phi = 0$, and $a_n$ and $w_n$ satisfy the
  decoupled equations:
  \begin{subequations}
    \begin{align}
      \label{e:wdecomp}
      w_{n+1} & = \calL^{-1}(p\abs{\phi}^{p-1} w_n)\\
      \label{e:adecomp}
      a_{n+1} & = (p - \gamma(p-1))a_n.
    \end{align}
  \end{subequations}
\end{proposition}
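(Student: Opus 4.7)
The plan is to obtain the decomposition by orthogonal projection in the $H^1$ inner product onto the one-dimensional subspace $\spn\{\phi\}$, then verify the decoupled recursions by applying $\calA'(\phi)$ termwise and checking that the $H^1$-orthogonal complement of $\phi$ is invariant under $\calL^{-1}(p\abs{\phi}^{p-1}\bullet)$.

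First I would set $a_n := \inner{v_n}{\phi}_{H^1}/\inner{\phi}{\phi}_{H^1}$ and $w_n := v_n - a_n \phi$, so that $v_n = a_n \phi + w_n$ and $w_n \perp_{H^1} \phi$ by construction. This is well-defined since $\phi \neq 0$. Applying $\calA'(\phi)$ to $v_n$ and using \eqref{e:frechetphi2}, I split into two contributions. For the component along $\phi$, the identity
$$\calA'(\phi)\phi = (p - \gamma(p-1))\phi$$
(recorded in Corollary \ref{c:Aspec}) gives $\calA'(\phi)(a_n\phi) = (p-\gamma(p-1))a_n\phi$. For the orthogonal component, the rank-one term in \eqref{e:frechetphi2} contains the factor $\inner{\phi}{w_n}_{H^1}$, which vanishes by construction, leaving
$$\calA'(\phi)w_n = \calL^{-1}(p\abs{\phi}^{p-1}w_n).$$

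The main step is to verify that this last expression is itself $H^1$-orthogonal to $\phi$, so that the decomposition $v_{n+1} = (p-\gamma(p-1))a_n\phi + \calL^{-1}(p\abs{\phi}^{p-1}w_n)$ is already the canonical one. Using $\inner{f}{g}_{H^1} = \inner{\calL f}{g}$ together with the ground state equation $\calL\phi = \abs{\phi}^{p-1}\phi$, I compute
$$\inner{\calL^{-1}(p\abs{\phi}^{p-1}w_n)}{\phi}_{H^1} = \inner{p\abs{\phi}^{p-1}w_n}{\phi} = p\inner{w_n}{\abs{\phi}^{p-1}\phi} = p\inner{w_n}{\calL\phi} = p\inner{w_n}{\phi}_{H^1} = 0.$$
This invariance of $\phi^\perp$ is essentially a restatement of the self-adjointness and eigenfunction property established in Theorem \ref{t:spec}.

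By uniqueness of the orthogonal decomposition $H^1_0 = \spn\{\phi\}\oplus \phi^\perp$, I then read off $a_{n+1} = (p-\gamma(p-1))a_n$ and $w_{n+1} = \calL^{-1}(p\abs{\phi}^{p-1}w_n)$, which are precisely \eqref{e:adecomp} and \eqml{e:wdecomp}. There is no real obstacle here beyond the orthogonality check above; the content of the proposition is that the rank-one correction in $\calA'(\phi)$ exactly cancels the component of $\calL^{-1}(p\abs{\phi}^{p-1}\bullet)$ acting on $\phi$, while leaving its action on $\phi^\perp$ unaltered.
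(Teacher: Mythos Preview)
Your proof is correct and follows essentially the same route as the paper: define $a_n$ and $w_n$ by $H^1$-orthogonal projection onto $\spn\{\phi\}$, apply $\calA'(\phi)$ termwise, and match components. You are in fact more explicit than the paper in verifying that $\calL^{-1}(p\abs{\phi}^{p-1}w_n)\perp_{H^1}\phi$, a step the paper uses tacitly when it takes the $H^1$ inner product with $\phi$ and cancels; one minor typo: \verb|\eqml{e:wdecomp}| should be \verb|\eqref{e:wdecomp}|.
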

\begin{remark}
  Notice in the case that $\gamma = \gamma_\star = \tfrac{p}{p-1}$,
  \eqref{e:adecomp} implies that $a_n = 0$ for all $n \geq 1$,
  regardless of the choice of $a_0$.
\end{remark}

\begin{proof}
  This follows from the spectral decomposition of the operator.
  Indeed, at any iterate, we can define
  \[
  a_n = \frac{\inner{\phi}{v_n}_{H^1}}{\inner{\phi}{\phi}_{H^1}}
  \]
  and then let $w_n = v_n - a_n \phi$.  Thus,
  $w_n \perp_{{H^1}} \phi$.  Comparing sequential iterates, we see
  \begin{equation*}
    \begin{split}
      v_{n+1} = \calA'(\phi)v_n& = \calA'(\phi)w_n  + a_n \calA'(\phi)\phi,\\
      w_{n+1} + a_{n+1} \phi& = \calL^{-1}(p\abs{\phi}^{p-1} w_n) +
      (p-\gamma(p-1))a_n \phi.
    \end{split}
  \end{equation*}
  Taking the $H^1$ inner product of both sides with $\phi$, we have
  $a_{n+1}=(p-\gamma(p-1))a_n$.  Going back and cancelling this out in
  the equation above yields
  $w_{n+1} = \calL^{-1}(p\abs{\phi}^{p-1} w_n)$.

\end{proof}

\begin{proposition}
  \label{p:lineariter}
  Assuming that Assumptions \ref{a:domain},\ref{a:subcrit},
  \ref{a:gamma}, and \ref{a:spectral} hold, let
  \begin{equation*}
    \label{e:mustar}
    \mu_\star = \sup \sigma(\calL^{-1}(p\abs{\phi}^{p-1}\bullet))\setminus
    \set{p}.
  \end{equation*}
  Then $\mu_\star \in [0,1)$, and for sequence \eqref{e:linear_iter}
  \begin{equation}
    \label{e:linear_conv}
    \norm{v_n}_{H^1}\leq \max\set{\abs{p- \gamma(p-1)}, \mu_\star}^n \norm{v_0}_{H^1}.
  \end{equation}
\end{proposition}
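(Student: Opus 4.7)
The plan is to leverage the decomposition $v_n = a_n\phi + w_n$ from the previous proposition, which decouples \eqref{e:linear_iter} into the scalar recurrence \eqref{e:adecomp} for $a_n$ and the operator recurrence \eqref{e:wdecomp} for $w_n$. The component $a_n$ decays at the explicit rate $\abs{p-\gamma(p-1)}$, while $w_n$ lives in the $H^1$-orthogonal complement of $\phi$, on which the operator $\calL^{-1}(p\abs{\phi}^{p-1}\bullet)$ has spectral radius exactly $\mu_\star$. Bounding the two components separately and recombining via orthogonality will yield \eqref{e:linear_conv}.

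First I would verify $\mu_\star \in [0,1)$: Assumption \ref{a:spectral} places every spectral value other than $p$ in $(-\infty,1)$, so $\mu_\star < 1$, while Theorem \ref{t:spec} guarantees non-negativity; compactness plus self-adjointness forces the spectrum to consist of eigenvalues accumulating only at $0$, so $\mu_\star$ is either an attained non-negative eigenvalue or $0$. Iterating \eqref{e:adecomp} then gives $\abs{a_n} = \abs{p-\gamma(p-1)}^n \abs{a_0}$, with $\abs{p-\gamma(p-1)} < 1$ by Assumption \ref{a:gamma}. For $w_n$, the algebraic multiplicity one hypothesis together with self-adjointness shows that $\phi^\perp$ (with respect to the $H^1$ inner product) is invariant under $\calL^{-1}(p\abs{\phi}^{p-1}\bullet)$ and that the restricted operator has spectrum $\sigma(\calL^{-1}(p\abs{\phi}^{p-1}\bullet)) \setminus \set{p}$, hence operator norm $\mu_\star$. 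Expanding $w_0 = \sum_{j:\,\nu_j\neq p} c_j \psi_j$ in the orthonormal eigenbasis of Theorem \ref{t:spec} and iterating \eqref{e:wdecomp} produces $w_n = \sum \nu_j^n c_j \psi_j$, so by Parseval $\norm{w_n}_{H^1} \leq \mu_\star^n \norm{w_0}_{H^1}$.

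Finally, by the Pythagorean theorem and the orthogonality $w_n \perp_{H^1} \phi$,
\begin{equation*}
\norm{v_n}_{H^1}^2 = a_n^2\norm{\phi}_{H^1}^2 + \norm{w_n}_{H^1}^2 \leq \max\set{\abs{p-\gamma(p-1)},\mu_\star}^{2n}\paren{a_0^2\norm{\phi}_{H^1}^2+\norm{w_0}_{H^1}^2},
\end{equation*}
and the parenthesized factor equals $\norm{v_0}_{H^1}^2$ by the same orthogonality. Taking square roots yields \eqref{e:linear_conv}. There is no substantial obstacle here; the only subtle ingredient is the algebraic multiplicity one clause of Assumption \ref{a:spectral}, which is precisely what guarantees that $\phi$ by itself spans the entire $p$-eigenspace, so that the restriction of $\calL^{-1}(p\abs{\phi}^{p-1}\bullet)$ to $\phi^\perp$ truly has spectral radius $\mu_\star$ rather than retaining the unstable value $p$.
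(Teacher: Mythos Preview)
Your proposal is correct and follows essentially the same route as the paper: decompose $v_n = a_n\phi + w_n$, control $a_n$ via the scalar recurrence and $w_n$ via the spectral expansion of $\calL^{-1}(p\abs{\phi}^{p-1}\bullet)$ restricted to $\phi^\perp$, then recombine by orthogonality. The only cosmetic difference is that you iterate directly from $v_0$ to $v_n$, whereas the paper bounds $\norm{v_{n+1}}_{H^1}$ in terms of $\norm{v_n}_{H^1}$ and then iterates; your explicit remark on why algebraic multiplicity one is needed is a nice clarification that the paper leaves implicit.
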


\begin{remark}
  Again, $\gamma=\gamma_\star$ plays a distinguished role in the above
  contraction estimate, making the constant $\mu_\star^n$.
\end{remark}

\begin{proof}
  By virtue of spectral assumption \eqref{e:spec_cond} and the
  compactness of $\calL^{-1}(p\abs{\phi}^{p-1}\bullet)$, we are
  assured that $\mu_\star<1$.  This is because the only possible
  cluster point in the spectrum of a compact operator is zero.  The
  fact that $\mu_\star \geq 0$ follows from Theorem \ref{t:spec}.

  Writing $v_{n+1} = w_{n+1} + a_{n+1} \phi$ and recalling
  $w_n \perp_{{H^1}} \phi$,
  \begin{equation*}
    \begin{split}
      \norm{v_{n+1}}_{H^1}^2 &= \norm{w_{n+1}}_{H^1}^2 + 2 a_{n+1}
      \inner{w_{n+1}}{\phi}_{H^1} + a_{n+1}^2 \norm{\phi}_{H^1}^2\\
      & = \norm{w_{n+1}}_{H^1}^2 + a_{n+1}^2 \norm{\phi}_{H^1}^2.
    \end{split}
  \end{equation*}
  Since $w_n \perp_{H^1} \phi$ and since we have assumed $p$ is the
  only eigenvalue of $\calL^{-1}(p\abs{\phi}^{p-1}\cdot)$ greater than
  or equal to one, then, by the spectral decomposition of the
  operator,
  \[
  w_{n+1} = \calL^{-1}(p\abs{\phi}^{p-1}w_n) = \sum_{\mu_j < 1} \mu_j
  \inner{\psi_j}{w_{n+1}}_{H^1} \psi_j.
  \]
  Therefore,
  \begin{equation*}
    \begin{split}
      \norm{w_{n+1}}_{H^1}^2 &= \sum_{\mu_j < 1} \abs{\mu_j}^2
      \abs{\inner{\psi_j}{w_{n+1}}_{H^1}}^2\leq \mu_\star^2
      \norm{w_{n+1}}_{H^1}^2,
    \end{split}
  \end{equation*}
  and
  \begin{equation*}
    \begin{split}
      \norm{v_{n+1}}_{H^1}^2 &\leq \mu_\star^2 \norm{w_{n+1}}_{H^1}^2+
      \abs{p -
        \gamma(p-1)}^2 a_n^2 \norm{\phi}_{H^1}^2\\
      &\leq \max\set{\mu_\star^2,\abs{p - \gamma(p-1)}^2
      }\norm{v_n}_{H^1}^2.
    \end{split}
  \end{equation*}

\end{proof}

Finally, we state our local nonlinear convergence result.
\begin{theorem}
  Under the same assumptions as in Proposition \ref{p:lineariter},
  further assume $\gamma$ satisfies \eqref{e:gamma_range}. Then there
  exists a neighborhood $\mathcal{N}$ of $\phi$ such that if
  $u_0 \in \mathcal{N}$, then $u_n\to \phi$.
\end{theorem}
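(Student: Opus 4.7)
The plan is to bootstrap the linear contraction estimate of Proposition \ref{p:lineariter} to a nonlinear contraction on a small $H^1_0$-ball around $\phi$, and then apply the Banach fixed point theorem to the operator $\calA$. Set
\[
\theta_\star := \max\set{\abs{p - \gamma(p-1)},\ \mu_\star}.
\]
Applying Proposition \ref{p:lineariter} with $n=1$ and $v_0 \in H^1_0$ arbitrary gives $\norm{\calA'(\phi) v_0}_{H^1} \leq \theta_\star \norm{v_0}_{H^1}$, so $\norm{\calA'(\phi)}_{H^1\to H^1} \leq \theta_\star$. Assumption \ref{a:gamma} forces $\abs{p-\gamma(p-1)} < 1$, and the spectral part of Proposition \ref{p:lineariter} gives $\mu_\star < 1$, so $\theta_\star < 1$.

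Fix any $\theta \in (\theta_\star, 1)$. By the continuity of $u \mapsto \calA'(u)$ in the $H^1$ operator norm asserted in Proposition \ref{p:lin_op}, combined with $\phi \neq 0$, one can choose $r>0$ small enough that the closed ball $\calN := \set{u \in H^1_0 : \norm{u-\phi}_{H^1} \leq r}$ is bounded away from $0$ in $H^1_0$ and satisfies $\norm{\calA'(u)}_{H^1\to H^1} \leq \theta$ for every $u \in \calN$. Since $\calN$ is convex, for any $u \in \calN$ the fundamental theorem of calculus for Fr\'echet derivatives yields
\[
\calA(u) - \phi = \calA(u) - \calA(\phi) = \int_0^1 \calA'\paren{\phi + t(u-\phi)}(u - \phi)\, \dee t,
\]
and therefore $\norm{\calA(u)-\phi}_{H^1} \leq \theta \norm{u-\phi}_{H^1}$. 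In particular $\calA(\calN) \subset \calN$, so the iteration $u_{n+1} = \calA(u_n)$ is well-defined on $\calN$ and satisfies by induction
\[
\norm{u_n - \phi}_{H^1} \leq \theta^n \norm{u_0 - \phi}_{H^1},
\]
proving $u_n \to \phi$ strongly in $H^1_0$.

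The main obstacle is not the contraction argument itself, which is essentially standard once the linearized spectral estimate is in hand, but rather ensuring that $\calA'$ is uniformly controlled on a genuine $H^1$-ball around $\phi$: the operator $\calA$ has the singular factor $M[u]^\gamma$ which blows up as $u \to 0$ in $L^{p+1}$, so one must shrink $r$ enough that $\calN$ stays uniformly away from the zero function (for instance, choosing $r < \tfrac{1}{2}\norm{\phi}_{H^1}$ and invoking the Sobolev embedding $H^1_0 \hookrightarrow L^{p+1}$). Once this separation from $0$ is secured, the continuity statement of Proposition \ref{p:lin_op} applies and the argument above closes.
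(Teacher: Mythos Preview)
Your proof is correct and follows essentially the same route as the paper's: bound $\norm{\calA'(\phi)}$ by $\theta_\star<1$ via the linear estimate, use the operator-norm continuity of $\calA'$ from Proposition \ref{p:lin_op} to propagate this to a small ball, and then apply a mean-value inequality for Fr\'echet derivatives to obtain a contraction. Your version is in fact slightly more explicit than the paper's in verifying that $\calA$ maps the ball into itself and in noting that the ball must be kept away from $0$ so that the continuity statement of Proposition \ref{p:lin_op} applies.
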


\begin{proof}
  Let $\eta= \max\set{\abs{p- \gamma(p-1)}, \mu_\star}<1$.  Since
  $u\mapsto \calA'(u)$ is continuous in the operator norm (see
  Proposition \ref{p:lin_op}), there is a
  neighborhood $\calN$ of $\phi$, such that
  \begin{equation*}
    \begin{split}
      \sup_{u \in \calN} \norm{ \calA'(u)} &\leq \sup_{u \in \calN}
      \norm{ \calA'(u)-\calA'(\phi)} + \norm{\calA'(\phi)}\leq
      \frac{1}{2}(1-\eta)+ \eta = \frac{1+\eta}{2}<1.
    \end{split}
  \end{equation*}
  Let $\theta\equiv (1+\eta)/2$. By the mean value theorem for
  Fr\'echet differentiable functions, \cite{Hutson:2005aa}, for all
  $u,v \in \calN$,
  \begin{equation*}
    \begin{split}
      \norm{\calA(u) - \calA(v)}_{H^1} &\leq \sup_{w \in \calN} \norm{
        \calA'(w)}\norm{u-v}_{H^1}\leq \theta \norm{u-v}_{H^1}.
    \end{split}
  \end{equation*}
  Therefore, we have a contraction, with parameter $\theta$.

\end{proof}

\section{Global Convergence}
\label{s:global}

In this section we provide results concerning the global convergence
of $\{ u_n \}$ and prove Theorem \ref{t:global}.

\subsection{Nonlinear Estimates}

Before we can proceed, we need certain {\it a priori} estimates on our
sequence.  We emphasize that these estimates are inherently nonlinear,
in contrast to those used to prove Theorem \ref{t:local}.  We also
define the $\alpha$ exponent,
\begin{equation}
  \label{e:alphadef}
  \alpha \equiv \gamma -\gamma p + p,
\end{equation}
which will play an important role in what follows.  By Assumptions
\ref{a:subcrit} and \ref{a:gamma}, we are assured that
$\abs{\alpha}<1$.

First, we have the following sequential inequalities.
\begin{lemma}\label{l.ineqs}
  Assume Assumptions \ref{a:domain}, \ref{a:subcrit}, and
  \ref{a:gamma} hold.  Also assume that $u_0\in H^1$ is nontrivial.
  Then the following identities hold for the sequence generated by
  \eqref{e:petvia_iter} for all $n \geq 0$:

  \begin{subequations}
    \begin{align}
      M[u_n]^{\gamma -1 } & \leq { \LN u_{n+1} \RN_{H^1} \over \LN u_{n} \RN_{H^1} }, \label{e:Mgrowth} \\
      {\LN u_{n+1} \RN_{H^1} \over \LN u_{n+1} \RN_{L^{p+1}} } & \leq {\LN u_{n} \RN_{H^1} \over \LN u_{n} \RN_{L^{p+1}} },  \label{e:revSob} \\
      M[u_n]^{\gamma -1 }  & \geq { \LN u_{n+1} \RN^2_{H^1} \over \LN u_{n} \RN^2_{H^1} } {\LN u_{n} \RN_{L^{p+1}} \over \LN u_{n+1} \RN_{L^{p+1}} } \label{e:Mgrowth2}, \\
      M[u_{n+1}] & \leq M[u_n]^\alpha, \label{e:Mmonotone}
    \end{align}
  \end{subequations}
  with $\alpha$ defined as in \eqref{e:alphadef}.

\end{lemma}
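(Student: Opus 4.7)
The plan is to treat all four inequalities as consequences of two fundamental identities for the iteration, combined with Cauchy--Schwarz and H\"older. The starting point is that, applying $\calL$ to both sides of the update rule, one has
\begin{equation*}
  \calL u_{n+1} = M[u_n]^\gamma \, |u_n|^{p-1} u_n,
\end{equation*}
together with the elementary rewriting $M[u] = \norm{u}_{H^1}^2 / \norm{u}_{L^{p+1}}^{p+1}$.

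First I would prove \eqref{e:Mgrowth}. Taking the $H^1$ inner product of $u_{n+1}$ with $u_n$ and using the identity above gives
\begin{equation*}
  \inner{u_{n+1}}{u_n}_{H^1} = \inner{\calL u_{n+1}}{u_n} = M[u_n]^\gamma \norm{u_n}_{L^{p+1}}^{p+1} = M[u_n]^{\gamma-1} \norm{u_n}_{H^1}^2.
\end{equation*}
Cauchy--Schwarz in $H^1$ applied to the left-hand side then yields \eqref{e:Mgrowth} immediately.

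Next, I would derive the key ``energy inequality''
\begin{equation*}
  \norm{u_{n+1}}_{H^1}^2 = M[u_n]^\gamma \inner{|u_n|^{p-1}u_n}{u_{n+1}} \leq M[u_n]^\gamma \norm{u_n}_{L^{p+1}}^{p} \norm{u_{n+1}}_{L^{p+1}},
\end{equation*}
by writing $\norm{u_{n+1}}_{H^1}^2 = \inner{\calL u_{n+1}}{u_{n+1}}$ and applying H\"older with conjugate exponents $(p+1)/p$ and $p+1$. This single bound will drive both \eqref{e:revSob} and \eqref{e:Mgrowth2}. Indeed, dividing by $\norm{u_{n+1}}_{L^{p+1}} \norm{u_{n+1}}_{H^1}$ and using \eqref{e:Mgrowth} to replace the $\norm{u_{n+1}}_{H^1}$ in the denominator by $M[u_n]^{\gamma-1}\norm{u_n}_{H^1}$, the factors of $M[u_n]$ cancel and the right-hand side simplifies to $\norm{u_n}_{H^1}/\norm{u_n}_{L^{p+1}}$, giving \eqref{e:revSob}. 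For \eqref{e:Mgrowth2}, multiplying the H\"older bound by $\norm{u_n}_{L^{p+1}}/(\norm{u_n}_{H^1}^2\norm{u_{n+1}}_{L^{p+1}})$ and again using $M[u_n]\norm{u_n}_{L^{p+1}}^{p+1} = \norm{u_n}_{H^1}^2$ produces exactly \eqref{e:Mgrowth2}.

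Finally, for the monotonicity statement \eqref{e:Mmonotone}, the plan is to combine \eqref{e:Mgrowth} and \eqref{e:revSob} to bound $\norm{u_{n+1}}_{L^{p+1}}$ from below by $M[u_n]^{\gamma-1}\norm{u_n}_{L^{p+1}}$, then substitute into the definition of $M[u_{n+1}]$. Using \eqref{e:revSob} in its squared form to replace $\norm{u_{n+1}}_{H^1}^2/\norm{u_{n+1}}_{L^{p+1}}^2$, one finds after collecting exponents
\begin{equation*}
  M[u_{n+1}] \leq M[u_n]^{1-(\gamma-1)(p-1)} = M[u_n]^{\gamma - \gamma p + p},
\end{equation*}
which is \eqref{e:Mmonotone} with the stated exponent $\alpha$. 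The main obstacle is bookkeeping in this last step—carefully tracking the exponents on $M[u_n]$ and on the two norms of $u_n$—rather than any genuinely subtle analytic difficulty; the inequalities are otherwise straightforward consequences of Cauchy--Schwarz, H\"older, and the update rule.
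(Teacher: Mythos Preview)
Your proof is correct and, for inequalities \eqref{e:Mgrowth}, \eqref{e:revSob}, and \eqref{e:Mgrowth2}, essentially identical to the paper's: both arguments rest on the discrete identity $\inner{u_{n+1}}{u_n}_{H^1}=M[u_n]^{\gamma-1}\norm{u_n}_{H^1}^2$ together with the H\"older bound $\norm{u_{n+1}}_{H^1}^2\leq M[u_n]^\gamma\norm{u_n}_{L^{p+1}}^p\norm{u_{n+1}}_{L^{p+1}}$, combined via Cauchy--Schwarz. Your use of \eqref{e:Mgrowth} in the derivation of \eqref{e:revSob} is exactly the paper's inequality (e:ineq2) in disguise, since $M[u_n]^{\gamma-1}\norm{u_n}_{H^1}^2=M[u_n]^\gamma\norm{u_n}_{L^{p+1}}^{p+1}$.

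For \eqref{e:Mmonotone} your route is a little different and somewhat cleaner. The paper first records the auxiliary bound $\norm{u_n}_{L^{p+1}}/\norm{u_{n+1}}_{L^{p+1}}\geq (M[u_{n+1}]/M[u_n])^{1/(p-1)}$, then squares (e:ineq2), divides, and solves the resulting implicit inequality for $M[u_{n+1}]$. You instead go directly: combine \eqref{e:Mgrowth} with \eqref{e:revSob} to get the explicit lower bound $\norm{u_{n+1}}_{L^{p+1}}\geq M[u_n]^{\gamma-1}\norm{u_n}_{L^{p+1}}$, then write $M[u_{n+1}]=(\norm{u_{n+1}}_{H^1}/\norm{u_{n+1}}_{L^{p+1}})^2\cdot\norm{u_{n+1}}_{L^{p+1}}^{-(p-1)}$ and bound each factor. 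Both reach $M[u_{n+1}]\leq M[u_n]^{1-(\gamma-1)(p-1)}=M[u_n]^\alpha$; your path avoids the implicit step and is marginally more transparent, while the paper's intermediate inequality (e:MLp) is reused later in the proof of Theorem~\ref{t:global}.
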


\begin{proof}
  {\bf Step 1.} Applying $I-\Delta$ to both sides of
  \eqref{e:petvia_iter}, we have
  \begin{equation} \label{derivativeform} (I - \Delta ) u_{n+1} =
    M[u_n]^\gamma(|u_n|^{p-1}u_n).
  \end{equation}
  Multiplying by $u_{n}$ and integrating by parts, we obtain the
  discrete identity
  \begin{equation} \label{e:discident} { \inner{ u_{n+1}}
      {u_n}_{H^1}\over \| u_n \|_{H^1}^2} = M[u_n]^{\gamma-1}.
  \end{equation} 
  Applying Cauchy-Schwarz to the above expression, we have
  \eqref{e:Mgrowth}.

  \noindent {\bf Step 2.}  
  If we multiply \eqref{derivativeform} by $u_{n+1}$, integrate by
  parts, and then use H\"older's inequality we have
  \begin{equation}\label{e:ineq1}
    \LN u_{n+1} \RN^2_{H^1} \leq M[u_n]^\gamma \LN u_{n+1} \RN_{L^{p+1}} \LN u_n \RN_{L^{p+1}}^p.
  \end{equation}
  On the other hand, if we multiply \eqref{derivativeform} by $u_n$
  and use Cauchy-Schwarz on the $H^1$ inner product, we have
  \begin{equation}\label{e:ineq2}
    M[u_n]^\gamma \LN u_{n} \RN^{p+1}_{L^{p+1}} \leq \LN u_{n+1} \RN_{H^1} \LN u_n \RN_{H^1}.
  \end{equation}

  Next, multiplying \eqref{e:ineq1} by $\LN u_{n} \RN_{L^{p+1}}$ and
  using \eqref{e:ineq2}, we have
  \begin{equation*}
    \begin{split}
      \LN u_{n+1} \RN^2_{H^1} \LN u_{n} \RN_{L^{p+1}} & \leq M[
      u_n]^\gamma \LN u_n \RN_{L^{p+1}}^{p+1} \LN u_{n+1}
      \RN_{L^{p+1}} \leq \LN u_{n+1} \RN_{H^1} \LN u_n \RN_{H^1} \LN
      u_{n+1} \RN_{L^{p+1}} .
    \end{split}
  \end{equation*}
  Inductively applying Proposition \ref{p:wellposed}, we are assured
  that $u_n$ is always nontrivial, so we can divide both sides of the
  above expression by
  $\LN u_{n+1} \RN_{H^1} \LN u_{n+1} \RN_{L^{p+1}} \LN u_n
  \RN_{L^{p+1}}$ to get, by induction,
  \begin{equation} \label{monotonicity} { \LN u_{n+1} \RN_{H^1} \over
      \LN u_{n+1} \RN_{L^{p+1}} } \leq { \LN u_{n} \RN_{H^1} \over \LN
      u_{n} \RN_{L^{p+1}}} \leq \cdots \leq { \LN u_{0} \RN_{H^1}
      \over \LN u_{0} \RN_{L^{p+1}} } \equiv C_0.
  \end{equation}
  This is \eqref{e:revSob}, and we have a reverse Sobolev embedding
  along the sequence
  \begin{align} \label{reverseSobolev} \LN u_{n} \RN_{H^1} \leq C_0
    \LN u_{n} \RN_{L^{p+1}}.
  \end{align}
  Recall the standard Sobolev embedding
  \begin{equation} \label{Sobolev} \LN u_{n} \RN_{L^{p+1}} \leq C_p
    \LN u_{n} \RN_{H^1},
  \end{equation}
  for all $n \geq 0$ since $p < d^\dagger$.

  \noindent {\bf Step 3.}  To establish \eqref{e:Mmonotone}, we first
  note
  \begin{equation*} { M [u_{n+1} ] \over M [u_n] }
    = { \LN u_{n+1} \RN_{H^1}^2 \over \LN u_{n} \RN_{H^1}^2} { \LN
      u_{n} \RN_{L^{p+1}}^{p+1} \over \LN u_{n+1} \RN_{L^{p+1}}^{p+1}
    } \stackrel{\eqref{monotonicity}}{\leq} { \LN u_{n}
      \RN_{L^{p+1}}^{p-1} \over \LN u_{n+1} \RN_{L^{p+1}}^{p-1} },
  \end{equation*}
  so
  \begin{equation} \label{e:MLp} { \LN u_{n} \RN_{L^{p+1}} \over \LN
      u_{n+1} \RN_{L^{p+1}} } \geq \LC { M[u_{n+1}] \over M [u_n] }
    \RC^{1 \over p-1}.
  \end{equation}

  Next, squaring \eqref{e:ineq2} and dividing by
  $\LN u_{n+1} \RN_{L^{p+1}}^{p+1} \LN u_n \RN_{H^1}^2$, we get
  \begin{align*}
    M[u_{n+1}] & \geq M[u_n]^{2 \gamma} { \LN u_n \RN_{L^{p+1}}^{p+1}
                 \over \LN u_n \RN^2_{H^1}}
                 { \LN u_n \RN_{L^{p+1}}^{p+1} \over \LN u_{n+1} \RN_{L^{p+1}}^{p+1}  } \\
               & \quad = M[u_n]^{2 \gamma - 1}{ \LN u_n \RN_{L^{p+1}}^{p+1} \over
                 \LN u_{n+1} \RN_{L^{p+1}}^{p+1} } \stackrel{
                 \eqref{e:MLp}}{\geq} M[u_n]^{2 \gamma - 1} {
                 M[u_{n+1}]^{p+1\over p-1} \over M[u_{n}]^{p+1 \over p-1} },
  \end{align*}
  so
  \begin{align*}
    M[u_{n+1}]^{-2 \over p-1} & \geq M[u_n]^{ (2 \gamma - 1) (p-1) - p
                                -1 \over p - 1} = M[u_n]^{-2 \alpha \over p-1},
  \end{align*}
  yielding \eqref{e:Mmonotone}.

\end{proof}

\begin{remark}
  In the case that $\gamma =\gamma_\star = {p \over p-1}$ (and
  $\alpha = 0$), we immediately have that $M[\phi_n] \leq 1$ for all
  $n \geq 1$.  Thus, $\gamma_\star$ plays a distinguished role in both
  the linear and nonlinear analysis of the problem.
\end{remark}

Using this result, we can obtain the following estimates on the entire
sequence.  The main tools are the following uniform estimates for the
sequence of solutions.

\begin{proposition} \label{p.regseq} Assume Assumptions
  \ref{a:domain}, \ref{a:subcrit}, and \ref{a:gamma} hold.  Let $u_n$
  be a sequence generated via \eqref{e:petvia_iter} with nontrivial
  $u_0$.

  \begin{itemize}

  \item There exist positive constants $C_1$ and $C_2$ {depending only
      on $\Omega$, $d$,} $\gamma$, $p$, $\| u_0 \|_{H^1}$, and
    $\| u_0\|_{L^{p+1}}$ such that for all $n \geq 0$,
    \begin{equation} \label{H1bd} 0 < C_1 \leq \LN u_n \RN_{H^1} \leq
      C_2.
    \end{equation}.
     
  \item There exists a positive constant $C_3$ {(with the same
      dependencies as $C_1$ and $C_2$)} such that for all $n\geq 1$,
    {\begin{equation} \label{W2bd} C_3 \geq \begin{cases}
          \LN u_n \RN_{W^{2, {2}}}   & d = 2, \\
          \LN u_n \RN_{W^{2, \kappa}} & d\geq 3, \kappa =
          \min\left\{\frac{2^*}{p}, 2\right\}.
        \end{cases}
      \end{equation}}

  \item If either $\p \Omega$ is smooth or $d = 1$, then there exists
    $k_0\in \mathbb{N}$, depending only on $d$ and $p$, such that for
    all $n\geq n_0$, $u_n \in C^{2(n - n_0)}$.

  \end{itemize}

\end{proposition}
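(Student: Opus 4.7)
The plan is to prove the three bullets in sequence, using Lemma~\ref{l.ineqs} together with Sobolev embedding and linear elliptic regularity.

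\emph{First bullet ($H^1$ bounds).} The strategy is to close a recursion in $\|u_n\|_{H^1}$ alone. The reverse Sobolev inequality \eqref{reverseSobolev} and the standard embedding \eqref{Sobolev}, inserted into the definition of $M$, yield a two-sided bound of the form $c_1/\|u_n\|_{H^1}^{p-1}\leq M[u_n]\leq c_2/\|u_n\|_{H^1}^{p-1}$, with $c_2$ proportional to $C_0^{p+1}$ and $c_1$ proportional to $C_p^{-(p+1)}$. Substituting the upper bound on $M[u_n]$ into \eqref{e:ineq1} (after controlling both $L^{p+1}$ norms by \eqref{Sobolev}) gives $\|u_{n+1}\|_{H^1}\leq C\|u_n\|_{H^1}^\alpha$, while substituting the lower bound on $M[u_n]$ into \eqref{e:Mgrowth} and using $\gamma>1$ gives $\|u_{n+1}\|_{H^1}\geq c\|u_n\|_{H^1}^\alpha$. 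Since $|\alpha|<1$, taking logarithms reduces both to contracting affine recursions for $\log\|u_n\|_{H^1}$, and these produce the uniform bounds $C_1\leq\|u_n\|_{H^1}\leq C_2$ with the stated dependencies on $\|u_0\|_{H^1}$, $\|u_0\|_{L^{p+1}}$, $p$, $\gamma$, $\Omega$, $d$. The lower bound is the subtle point: Lemma~\ref{l.ineqs} supplies only upper bounds on $M[u_{n+1}]/M[u_n]$, so a matching lower bound on $\|u_n\|_{H^1}$ must be extracted indirectly, via the standard Sobolev embedding combined with the strict inequality $\gamma>1$.

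\emph{Second bullet ($W^{2,\kappa}$ bound).} Applying $\calL$ to \eqref{e:petvia_iter} yields $\calL u_{n+1}=M[u_n]^\gamma|u_n|^{p-1}u_n$. The choice $\kappa=\min\{2,2^*/p\}$ is made precisely so that the Sobolev embedding $H^1_0\hookrightarrow L^{p\kappa}$ (or into any $L^q$ when $d=2$) controls $\bigl\||u_n|^{p-1}u_n\bigr\|_{L^\kappa}\leq C\|u_n\|_{H^1}^p$, and so that $\kappa\leq 2$. The latter condition permits invocation of the standard $W^{2,\kappa}$ estimate for the Dirichlet Laplacian on a convex (hence Lipschitz) bounded domain, giving $\|u_{n+1}\|_{W^{2,\kappa}}\leq C\|\calL u_{n+1}\|_{L^\kappa}$, which together with the $H^1$ bound from the first bullet and the upper bound on $M[u_n]$ produces $C_3$.

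\emph{Third bullet (higher regularity).} For $d=1$, $H^1\hookrightarrow L^\infty$ directly, so $|u_n|^{p-1}u_n$ is bounded and one-dimensional elliptic regularity gives smoothness immediately. For $d\geq 2$ with $\partial\Omega\in C^\infty$, one bootstraps on $\calL u_{n+1}=M[u_n]^\gamma|u_n|^{p-1}u_n$: Sobolev embedding applied to the $W^{2,\kappa}$ estimate improves integrability, and a finite number of iterations of $\calA$ places $u_{n_0}$ in $L^\infty$ for some fixed $n_0$ depending only on $d$ and $p$. After $n_0$, smooth-boundary elliptic regularity together with the fact that $|u|^{p-1}u$ is at least $C^1$ in $u$ (and preserves the regularity of $L^\infty$-bounded functions up to the order permitted by $p$) allows each further application of $\calA$ to gain two derivatives, yielding $u_n\in C^{2(n-n_0)}$ for $n\geq n_0$. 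The principal difficulty across all three steps is thus the first bullet's lower bound; once the $H^1$ estimates are in hand, the remaining regularity statements are routine consequences of elliptic theory.
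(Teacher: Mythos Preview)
Your proposal is correct and follows essentially the same route as the paper: derive the two-sided recursion $c\|u_n\|_{H^1}^\alpha \le \|u_{n+1}\|_{H^1}\le C\|u_n\|_{H^1}^\alpha$ from \eqref{e:ineq1}, \eqref{e:ineq2}/\eqref{e:Mgrowth}, \eqref{reverseSobolev}, \eqref{Sobolev} and iterate using $|\alpha|<1$; then invoke convex-domain $W^{2,\kappa}$ regularity for the second bullet and a finite bootstrap followed by Schauder-type gains of two derivatives per iteration for the third. The only cosmetic differences are that the paper treats the cases $\alpha>0$ and $\alpha<0$ separately rather than phrasing it as a contracting log-recursion (when $\alpha<0$ the upper and lower recursions must be interleaved, which you should make explicit), and the paper's convex-regularity estimate carries an extra $\|u_{n+1}\|_{L^\kappa}$ term that you absorb via the $H^1$ bound.
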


The proof of the proposition relies on the regularity of weak
solutions to the elliptic equation $(I-\Delta)v = f$.  We state these
classical $L^q$ regularity results in the following two lemmas for
future reference.  The first addresses the situation of a smooth
boundary and can be found in, e.g., Theorems~9.13 and 9.15
of~\cite{GilbargTrudinger}.  The second concerns the case of a convex
domain.  It is a slight variant of Corollary~1 of~\cite{fromm:1993};
see also~\cite{adolfsson:1993}.

\begin{lemma} Let $\Omega \subset \mathbb{R}^d$ be an open, bounded
  set, and assume that either $d = 1$ or $\partial\Omega$ is
  smooth. Let $f \in L^q(\Omega)$ for some $1 < q < \infty$. Then the
  equation $(I - \Delta)v = f$ in $\Omega$ with $v = 0$ on
  $\partial \Omega$ has a unique weak solution $v \in W^{2,q}(\Omega)$
  satisfying the estimate
  \begin{equation}\label{smoothReg}
    \| v\|_{W^{2,q}(\Omega)} \leq C \big(\|f\|_{L^q(\Omega)} + \|v\|_{L^{q}(\Omega)}\big).  
  \end{equation}        
\end{lemma}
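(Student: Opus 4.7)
The plan is to deduce this from the classical $L^q$ elliptic regularity theory for the pure Laplacian, treating the identity term as a harmless lower-order perturbation. The proof naturally splits into an existence/uniqueness step and an a priori $W^{2,q}$ estimate, which are combined through an approximation argument.

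First, I would establish existence and uniqueness of a weak solution $v \in H^1_0(\Omega)$. For $q \geq 2d/(d+2)$, Sobolev duality gives $L^q(\Omega) \hookrightarrow H^{-1}(\Omega)$, so $f$ defines a bounded linear functional on $H^1_0$ and Lax--Milgram applied to the coercive symmetric form $B(u,w) = \inner{u}{w}_{H^1}$ (which is already the $H^1$ inner product introduced in the paper) produces the unique weak solution. For smaller $q$, I would approximate $f$ in $L^q$ by a sequence of smooth functions $f_k$, solve each perturbed problem in $H^1_0$, and extract a $W^{2,q}$ limit using the a priori estimate established below; uniqueness then follows by applying that same estimate to the difference of two solutions with vanishing right-hand side.

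For the estimate itself I would rewrite the equation as $-\Delta v = f - v$ and invoke the classical $L^q$ Calder\'on--Zygmund theory: Theorem~9.13 of Gilbarg--Trudinger for interior estimates and Theorem~9.15 for the global version up to a smooth boundary. Since $-\Delta$ is uniformly elliptic with constant coefficients, these apply directly to $f - v \in L^q(\Omega)$ and yield
\begin{equation*}
  \|v\|_{W^{2,q}(\Omega)} \leq C\bigl(\|f - v\|_{L^q(\Omega)} + \|v\|_{L^q(\Omega)}\bigr) \leq C\bigl(\|f\|_{L^q(\Omega)} + \|v\|_{L^q(\Omega)}\bigr),
\end{equation*}
after adjusting the constant. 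In one dimension this is elementary: the ODE $v'' = v - f$ with Dirichlet data has an explicit Green's function, and differentiating under the integral directly controls $\|v''\|_{L^q}$, with $\|v'\|_{L^q}$ recovered by integration and the Dirichlet condition.

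The main obstacle is the boundary term in the global estimate. On all of $\R^d$, Calder\'on--Zygmund gives $\|D^2 v\|_{L^q} \leq C\|\Delta v\|_{L^q}$ with no correction, but the presence of $\partial \Omega$ forces the lower-order $\|v\|_{L^q}$ into the bound. The standard argument handles this by covering $\Omega$ with interior and boundary patches, flattening $\partial \Omega$ on each boundary patch via a smooth diffeomorphism, reflecting across the flattened boundary, and applying the whole-space estimate to a pulled-back equation whose coefficients inherit the regularity of the chart. This localization-and-flattening procedure is precisely where smoothness of $\partial \Omega$ is used; it fails for merely convex domains, which is why the companion lemma (stated next in the paper) must invoke the more delicate analysis of Fromm and Adolfsson.
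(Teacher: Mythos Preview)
Your proposal is correct. The paper does not actually prove this lemma at all: it merely states it as a classical result and cites Theorems~9.13 and~9.15 of Gilbarg--Trudinger, the very same references you invoke. Your argument (Lax--Milgram for existence, rewriting as $-\Delta v = f - v$ to reduce to the pure Laplacian, then applying the global Calder\'on--Zygmund estimate up to a smooth boundary) is the standard route behind those theorems and fills in what the paper leaves as a citation.
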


\begin{lemma}
  Let $f \in L^q(\Omega) \cap H^{-1}(\Omega)$ for some $1 < q \leq 2$,
  and assume that $d \geq 2$ with $\Omega$ satisfying
  Assumption~\ref{a:domain}.  Let $v \in H^1_0(\Omega)$ be the unique
  weak solution to $(I - \Delta)v = f$ in $\Omega$ with $v = 0$ on
  $\partial \Omega$, and further assume that $v \in L^q(\Omega)$.
  Then $v \in W^{2,q}(\Omega)$, and
  \begin{equation}\label{convexReg}
    \| v\|_{W^{2,q}(\Omega)} \leq C \big( \|f\|_{L^q(\Omega)} + \|v\|_{L^q(\Omega)}\big).  
  \end{equation}
\end{lemma}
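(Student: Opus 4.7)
The plan is to reduce this Helmholtz-type regularity statement on a convex domain to Fromm's pure-Laplacian result (Corollary~1 of~\cite{fromm:1993}), treating that corollary as a black box. First I would move the zero-th order term to the right: set $g := f - v$. The hypothesis $f \in L^q(\Omega)$, together with the assumption $v \in L^q(\Omega)$ (which is in fact automatic from $v \in H^1_0 \subset L^2 \subset L^q$ on a bounded domain when $1 < q \leq 2$), gives $g \in L^q(\Omega)$ with $\|g\|_{L^q} \leq \|f\|_{L^q} + \|v\|_{L^q}$. The weak form of $(I - \Delta)v = f$ is then equivalent to $v$ being the unique $H^1_0$-weak solution of $-\Delta v = g$ in $\Omega$ with $v = 0$ on $\partial\Omega$.

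Next, I would invoke Corollary~1 of~\cite{fromm:1993} (see also~\cite{adolfsson:1993}), which asserts that on a bounded convex $\Omega \subset \R^d$ with $d \geq 2$ and any $1 < q \leq 2$, the weak $H^1_0$ solution of the Poisson problem $-\Delta v = g$ with $g \in L^q(\Omega)$ lies in $W^{2,q}(\Omega)$ and obeys an estimate of the form
\[
\|v\|_{W^{2,q}(\Omega)} \leq C \|g\|_{L^q(\Omega)},
\]
with $C$ depending only on $\Omega$, $d$, and $q$. Substituting $g = f - v$ yields \eqref{convexReg} immediately. If the precise statement in~\cite{fromm:1993} already carries an additional $\|v\|_{L^q}$ on the right-hand side, nothing in the argument changes.

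The main obstacle lives entirely at the level of the cited input; on our side the reduction amounts to bookkeeping. The essential structural fact being used is that convexity of $\partial\Omega$, even without smoothness, is enough to recover the full $W^{2,q}$ gain for $1 < q \leq 2$: this is Kadlec's theorem at $q = 2$ together with Fromm's extension to $1 < q < 2$ via a Calder\'on--Zygmund type argument near the boundary. The restriction $q \leq 2$ is natural, since the corresponding $W^{2,q}$ estimate for $q > 2$ generally requires a $C^{1,1}$ (or better) boundary, a regularity not available under Assumption~\ref{a:domain}.
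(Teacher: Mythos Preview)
Your reduction is correct and matches the paper's intent: the paper does not supply a proof of this lemma at all, but simply states it as ``a slight variant of Corollary~1 of~\cite{fromm:1993}; see also~\cite{adolfsson:1993}.'' Your argument---moving the zero-th order term to the right via $g = f - v$ and invoking Fromm's $W^{2,q}$ estimate for the Poisson problem on convex domains---is precisely that slight variant, and your side remark that $v \in L^q$ is automatic from $H^1_0 \hookrightarrow L^2 \hookrightarrow L^q$ for $q \le 2$ on a bounded domain is also correct.
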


\begin{proof}[Proof of Proposition~\ref{p.regseq}]

  To establish the regularity of the sequence we use the estimates
  generated from Lemma~\ref{l.ineqs}.  In the following, we focus on
  $d \geq 3$. The cases $d=1,2$ are somewhat simpler.

  \noindent {\bf Step 1.}  We first consider the growth of the
  $H^1$-norms of the $u_n$'s generated by the iteration:
  \begin{equation*}
    \begin{split}
      \LN u_{n+1} \RN_{H^1}
      & \stackrel{\eqref{e:ineq1},\eqref{Sobolev}}{\leq} C_p M [u_n]^\gamma  \LN u_n \RN_{L^{p+1}}^p  =  C_p  \LN u_n \RN_{H^1}^{2 \gamma}    \LN u_n \RN_{L^{p+1}}^{p - (p+1) \gamma} \\
      & \stackrel{\eqref{reverseSobolev},\eqref{Sobolev}}{\leq}
      C_p^{p+1} C_0^{\gamma(p+1)} \LN u_n \RN_{H^1}^{\gamma - \gamma p
        + p}.
    \end{split}
  \end{equation*}
  Thus,
  \begin{equation} \label{alphaupper} \LN u_{n+1} \RN_{H^1} \leq A \LN
    u_n \RN_{H^1}^{\alpha},
  \end{equation}
  where we have taken
  \[
  A = \max\{ C_p^{p+1} C_0^{\gamma (p+1)} , 1\}.
  \]
  Likewise, \eqref{e:ineq2}, \eqref{reverseSobolev}, and
  \eqref{Sobolev} imply the lower bound
  \begin{equation} \label{alphalower} \LN u_{n+1} \RN_{H^1} \geq B \LN
    u_n \RN^\alpha_{H^1},
  \end{equation}
  where we have set
  \[
  B = \min\{ { C_p^{-\gamma ( p+1) } C_0 ^{-p-1}} , 1 \}.
  \]

  \noindent{\bf Step 2.}  Note that \eqref{H1bd} holds trivially when
  $\alpha = 0$.  Now consider $0 < \alpha < 1$. By iterating, and
  using that $A \geq 1$,
  \begin{equation*}
    \LN u_{n} \RN_{H^1} \leq A^{\sum_{k=0}^{n-1} \alpha^k} \LN u_0 \RN_{H^1}^{\alpha^{n}} \leq A^{1 \over 1 - \alpha } \LN u_0 \RN_{H^1}^{\alpha^{n}}.
  \end{equation*}
  Similarly, since $B \leq 1$ we have
  \begin{equation*}
    \LN u_{n} \RN_{H^1}\geq B^{\sum_{j=0}^{n-1} \alpha^j} \LN u_0 \RN_{H^1}^{\alpha^{n}}  \geq B^{1 \over 1 - \alpha} \LN u_0 \RN_{H^1}^{\alpha^{n}}.
  \end{equation*}

  Next, consider $-1 < \alpha < 0$.  Using both \eqref{alphaupper} and
  \eqref{alphalower}, we find that
  \begin{equation*}
    \LC B\over A^{|\alpha|} \RC \LN u_{n-2} \RN_{H^1}^{|\alpha|^2} \leq  \LN u_{n} \RN_{H^1} \leq \LC A \over B^{|\alpha|} \RC \LN u_{n-2}\RN_{H^1}^{|\alpha|^2}.
  \end{equation*}
  Again, by iteration,
  \begin{equation*}
    \LN u_{n} \RN_{H^1}  \leq 
    \begin{cases}
      \LC {A \over B^{|\alpha|} } \RC^{ 1
        \over 1 - |\alpha|^2} \LN u_0  \RN_{H^1}^{ |\alpha|^n} & \text{$n$ even},   \\
      \LC {A \over B^{|\alpha|} } \RC^{ 1 \over 1 - |\alpha|^2}
      A^{|\alpha|^{n-1}} \LN u_0 \RN_{H^1}^{- |\alpha|^n} &\text{$n$
        odd},
    \end{cases}
  \end{equation*}
  and
  \begin{equation*}
    \LN u_{n} \RN_{H^1}  \geq 
    \begin{cases}
      \LC {B \over A^{|\alpha|} } \RC^{ 1 \over 1 - |\alpha|^2} \LN
      u_0 \RN_{H^1}^{ |\alpha|^n} & \text{$n$ even}, \\ \LC {B \over
        A^{|\alpha|} } \RC^{ 1 \over 1 - |\alpha|^2}
      A^{|\alpha|^{n-1}} \LN u_0\RN_{H^1}^{- |\alpha|^n} & \text{$n$
        odd}.
    \end{cases}
  \end{equation*}
  These estimates imply \eqref{H1bd}.

  \noindent{\bf Step 3.}  We now prove \eqref{W2bd} for $d \geq 3$.
  We recall that $\kappa = \min\left\{2, \frac{2^*}{p}\right\}$ and
  consider the two cases, $\kappa = 2$ or $\kappa = \frac{2^*}{p}$.
  First suppose that $\kappa = \frac{2^*}{p}$ so
  $\frac{2^*}{p} \leq 2$.
	
  Using the regularity result~\eqref{convexReg} with $v = u_{n+1}$,
  $f = M[u_n]^\gamma |u_n|^{p - 1} u_n$, and $q = {2^* \over p}$, we
  obtain
  \begin{equation*}
    \begin{split}
      \LN u_{n+1} \RN_{W^{2, {2^* \over p}}} \leq~& C M
      [u_n]^\gamma \LN \abs{u_{n}}^{p-1}u_n \RN_{L^{2^*\over p}} + C \|u_{n+1}\|_{L^{2^*\over p}} \\
      =~& C { \LN u_n \RN_{H^1}^{2 \gamma} \over \LN u_n
        \RN_{L^{p+1}}^{\gamma(p+1)}} \LN u_n \RN_{L^{2^*}}^p + C C_p\|u_{n+1}\|_{H^1} \\
      \leq~& \big( C C_{2^*}^p C_0^{\gamma(p+1)} + ACC_p\big) \LN u_n
      \RN_{H^1}^{\alpha},
    \end{split}
  \end{equation*}
  where we have used the reverse Sobolev
  embedding~\eqref{reverseSobolev}, standard Sobolev embedding,
  and~\eqref{alphaupper} in the final inequality.  As
  $\LN u_n \RN_{H^1} \leq C_2$ uniformly in $n$, the
  $W^{2,{2^*\over p}}$ bound \eqref{W2bd} follows.
	
  Now assume that $\kappa = 2$ so that $2 \leq {2^* \over p}$.  We may
  again invoke~\eqref{convexReg} with $q = 2$ to deduce
  \begin{align*}
    \|u_{n+1}\|_{W^{2,2}} \leq~& C M
                                 [u_n]^\gamma \LN \abs{u_{n}}^{p-1}u_n\RN_{L^2} + C \|u_{n+1}\|_{L^{2}} \\
    \leq~& C M
           [u_n]^\gamma \LN\abs{u_{n}}^{p-1}u_n \RN_{L^{2^*\over p}} + C \|u_{n+1}\|_{H^1}.
  \end{align*}
  Proceeding as in the previous case gives the desired result.
		
  A similar argument (without needing separate cases) holds for $d=2$.

  \noindent{\bf Step 4.} When the boundary is smooth, we can further
  iterate the algorithm and generate improved regularity.  But first
  we prove two small claims to organize the argument.  If $v \in L^s$
  and $f \in L^q$ with $1 < s < q < \infty$ with $s < {d \over 2}$ and
  $(I - \Delta ) v = f$ then
  \begin{equation} \label{e:w2estint2} \LN v \RN_{L^{sd \over d - 2
        s}} \leq C \LC \LN v \RN_{L^s} + \LN f \RN_{L^q} \RC.
  \end{equation}

  To show this we note that since the domain is compact,
  $\LN f \RN_{L^s} \leq C \LN f \RN_{L^q}$ with a constant $C$
  depending on $\Omega$.  {Since the boundary is smooth}, by
  \eqref{smoothReg} we have
  \[
  \LN v \RN_{W^{2,s}} \leq C \LC \LN v \RN_{L^s} + \LN f \RN_{L^q}
  \RC,
  \]
  and Sobolev embedding implies \eqref{e:w2estint2}.
 
  \noindent{\bf Step 5.} Next, we claim that if $v \in L^r$ and
  $f \in L^q$ with $1 < r < q < \infty$ and $(I - \Delta ) v = f$ then
  either $v \in C^{0,\beta}$ for some $\beta \in (0,1]$ or
  $v \in {W^{2,q}}$ with
  \begin{equation} \label{e:w2estint3} \LN v \RN_{W^{2,q}} \leq C \LC
    \LN v \RN_{L^r} + \LN f \RN_{L^q} \RC.
  \end{equation}
  
  We can prove \eqref{e:w2estint3} by iterating.  Set $r_0 = r$, {and
    consider a sequence of iterates $r_j$ such that $v \in W^{2,r_j}$
    and defined as follows.}  If $r_j > {d \over 2}$, then
  $v \in C^{0,\beta}$ for some $0 < \beta \leq 1$ by Morrey's
  inequality, and the iteration terminates.  On the other hand, if
  $r_j = {d \over 2}$, then $v \in L^s$ for all $s < \infty$.  In
  particular, $v \in L^q$, so {\eqref{smoothReg} implies
    $v \in {W^{2,q}}$, and the iteration stops.  Finally, if
    $r_j < {d \over 2}$, then by Step 4,
    $v \in L^{r_j d \over d - 2 r_j}$.  If
    ${r_j d \over d - 2 r_j} \geq q$, then $v \in L^q$,
    \eqref{smoothReg} implies $v \in {W^{2,q}}$, and the iteration
    halts.  If ${r_j d \over d - 2 r_j} < q$, then set
    $r_{j+1} = {r_j d \over d - 2 r_j}$ and continue the iteration.}

  \noindent{\bf Step 6.}
  
  We claim that there exists $n_0 \in \mathbb{N}$ such that
  $u_{n_0} \in C^{0,\beta}$ for a $0 < \beta \leq 1$.  Note that if
  $u_n \in W^{2,r_n}$ with $r_n > {d \over 2}$, then by Morrey's
  inequality, $u_n \in C^{0,\beta_n}$ for
  $\beta_n = 2 - {d \over r_n }$; therefore, we will iterate the
  Petviashvilli algorithm until we reach an $n$ such that
  $r_n > {d \over 2}$.  In the following we will use \eqref{H1bd}
  which implies
  \begin{equation} \label{e:unibd2s} \LN u_{j} \RN_{L^{2^*}}\leq C
    \qquad \hbox{ for all } j \in \mathbb{N}.
  \end{equation}

  If $r_{n} < {d \over 2}$, then we claim that either
  $u_{n+1} \in C^{0,\beta}$ for some $0 < \beta \leq 1$ or
  $u_{n+1} \in W^{2, r_{n+1}}$ with
  \begin{equation} \label{e:recursion} r_{n+1} = {d r_n \over p (d - 2
      r_n)}.
  \end{equation}   
  First note that by Sobolev embedding $u_n \in L^{s_n}$ with
  ${1 \over s_n} = {1\over r_n} - {2 \over d}$ so
  $|u_n|^{p-1} u_n \in L^{s_n \over p}$.  Therefore, if
  ${s_n \over p} \leq 2^*$, then \eqref{e:unibd2s} implies
  $u_{n+1} \in L^{s_n \over p}$, and we can use \eqref{smoothReg} with
  $q = {s_n \over p}$ to see that $u_{n+1} \in W^{2, r_{n+1}}$.  If
  ${s_n \over p} > 2^*$, then we use Step 5, which implies either
  $u_{n+1} \in C^{0,\beta}$ for some $0 < \beta \leq 1$ or
  $u_{n+1} \in W^{2,{r_{k+1}}}$ in which case $r_{k+1}$ satisfies
  \eqref{e:recursion}.  We note that \eqref{e:recursion} is an
  increasing sequence so long as $p < d^\dagger$.  This iteration can
  be solved explicitly with $r_1 = {2^* \over p}$:
  \[
  r_n = { 2 d ( p -1) \over 4 p - p^n \LC (d+2) - (d-2) p \RC}.
  \]
  We now iterate the sequence until $r_{n_0} > {d\over 2}$ at which
  point $W^{2,r_{n_0}}$ embeds in a H\"older space.
  
  If $r_n = {d \over2 }$, then $u_n \in L^s$ for all $ s < \infty$.
  We take $s = {2d \over p}$ so $ |u_n|^{p-1}u_n \in L^{2 d \over p}$.
  Following a similar argument as above shows that
  $u_{n+1} \in W^{2, {2 d \over p}} \subset C^{0,\beta}$ for some
  $0 < \beta\leq1$.

  Finally, we can use classical Schauder estimates to show that
  $u_{n} \in C^{2 (n - n_0), {\beta}}$ for all $n \geq n_0$.

\end{proof}


\subsection{Global Convergence}

\begin{proof}[Theorem \ref{t:global}]

  First, we recall Lemma~\ref{l.ineqs} which implies that if
  $\gamma = {p \over p-1}$ then $\alpha = 0$ and $M[u_n] \leq 1$ for
  all $n \geq1$.

  \noindent{\bf Step 1.} We will first show that
  \begin{equation} \label{e:Mto1} \lim_{n\to \infty} M[u_n]= 1.
  \end{equation}
  By \eqref{e:Mgrowth}, we have
  $M[u_n]^{\gamma-1} \leq { \| u_{n+1} \|_{H^1} / \| u_{n} \|_{H^1} }$
  so that
  \begin{align*}
    M[u_n]^{\gamma-1} M[u_{n-1}]^{\gamma-1} \cdots M[u_0]^{\gamma-1}
    \leq { \| u_{n+1} \|_{H^1} \over \| u_{n} \|_{H^1} } { \| u_{n}
    \|_{H^1} \over \| u_{n-1} \|_{H^1} } \cdots { \| u_{1} \|_{H^1}
    \over \| u_{0} \|_{H^1} }.
  \end{align*}
  Rewriting this as
  \begin{align*}
    \LC \prod_{k=0}^n M[u_k] \RC^{\gamma -1 } \leq { \| u_{n+1}
    \|_{H^1} \over \| u_{0} \|_{H^1} } ,
  \end{align*}
  we can, from \eqref{H1bd}, conclude that
  \begin{equation}
    \prod_{k=0}^n M[u_k] \leq  \LC { \| u_{n+1} \|_{H^1} \over  \| u_{0} \|_{H^1} } \RC^{1 \over \gamma -1 } \leq C.
  \end{equation}

  We repeat this argument with the lower bound \eqref{e:Mgrowth2}.  In
  particular we get
  \begin{align*}
    M[u_n]^{\gamma-1}  \cdots M[u_0]^{\gamma-1} &\geq { \| u_{n+1} \|^2_{H^1} \over  \| u_{n} \|^2_{H^1} }  \cdots { \| u_{1} \|^2_{H^1} \over  \| u_{0} \|^2_{H^1} } { \| u_{n} \|_{L^{p+1}} \over  \| u_{n+1} \|_{L^{p+1}} } \cdots { \| u_{0} \|_{L^{p+1}} \over  \| u_{1} \|_{L^{p+1}} }  \\
                                                &\quad = { \| u_{n+1} \|^2_{H^1} \over \| u_{0} \|^2_{H^1} } { \| u_{0}
                                                  \|_{L^{p+1}} \over \| u_{n+1} \|_{L^{p+1}} }.
  \end{align*}
  Using \eqref{H1bd} and \eqref{e:revSob}, we have
  \begin{equation} \label{e:prodlow} \prod_{k=0}^n M[u_k] \geq \LC {
      \| u_{n+1} \|^2_{H^1} \over \| u_{0} \|^2_{H^1} } { \| u_{0}
      \|_{L^{p+1}} \over \| u_{n+1} \|_{L^{p+1}}} \RC^{1 \over \gamma
      -1 } \geq C > 0.
  \end{equation}

  Now consider the series $\sum_{k=1}^\infty \log M[u_k]$.  Since
  $\log M[u_k] \leq 0$ by \eqref{e:Mmonotone} and since
  \[
  \sum_{k=0}^\infty \log M[u_k] \geq {\log C} > -\infty
  \]
  from \eqref{e:prodlow}, the series converges.  Therefore,
  $\lim_{n \to \infty} \log M[u_n] = 0$.  This implies \eqref{e:Mto1}.

  \noindent{\bf Step 2.} Next we show that
  $\| u_{n+1} - u_n \|_{H^1} = \littleo_n(1)$ for all $n \geq1$.  We
  begin with the estimate
  \begin{align*}
    M[u_n]^{\gamma-1} \stackrel{\eqref{e:Mgrowth2}}{\geq} { \LN
    u_{n+1} \RN^2_{H^1} \over \LN u_{n} \RN^2_{H^1}} { \LN u_{n}
    \RN_{L^{p+1}} \over \LN u_{n+1} \RN_{L^{p+1}} }
    \stackrel{\eqref{e:MLp}}{\geq} { \LN u_{n+1} \RN^2_{H^1} \over \LN
    u_{n} \RN^2_{H^1}} \LC { M[u_{n+1}] \over M[u_n] } \RC^{1 \over
    p-1}.
  \end{align*}
  From this we obtain
  \begin{equation} \label{e:phiH1grow} \LN u_{n+1} \RN^2_{H^1} \leq
    M[u_n]^{\gamma-1} \LC { M[u_{n+1}] \over M[u_n] } \RC^{1 \over
      p-1} \LN u_{n} \RN^2_{H^1}.
  \end{equation}
  Therefore,
  \begin{align*}
    \LN u_{n+1} - u_n \RN_{H^1}^2
    & = \LN u_{n+1} \RN_{H^1}^2 + \LN u_{n} \RN_{H^1}^2 - 2 \LC u_{n+1} , u_n \RC \\
    & \stackrel{\eqref{e:discident}}{=}  \LN u_{n+1} \RN_{H^1}^2 + \LN u_{n} \RN_{H^1}^2 - 2 M[u_n]^{\gamma - 1} \LN u_{n} \RN_{H^1}^2  \\
    & \stackrel{\eqref{e:phiH1grow}}{\leq} \LN u_{n} \RN_{H^1}^2\LB 1
      - 2 M[u_n]^{\gamma - 1} + M[u_n]^{\gamma-1}
      \LC { M[u_{n+1}] \over M[u_n] }  \RC^{1 \over p-1} \RB  \\
    & \stackrel{\eqref{H1bd}}{\leq} C \LB 1 - 2 M[u_n]^{\gamma - 1} +
      M[u_n]^{\gamma-1} \LC { M[u_{n+1}] \over M[u_n] } \RC^{1 \over
      p-1} \RB.
  \end{align*}
  Since $M[u_n] \to 1$, the right-hand side vanishes as
  $n \to \infty$.

  \noindent {\bf Step 3.}  We claim that our iteration converges to a
  strong solution to \eqref{e:boundstate}.  Again, we present the
  argument for $d \geq 2$. By Proposition \ref{p.regseq}, we know that
  our sequence $\{ u_n \}$ is uniformly bounded in $W^{2,\kappa}$
  (where $\kappa$ is defined in Theorem~\ref{t:global}) so we can
  extract a weakly convergent subsequence, $\{u_{n_k}\}$, in
  $W^{2,\kappa}$, with limit $\phi \in W^{2,\kappa}$.  By Rellich,
  $W^{2,\kappa}$ is compactly embedded in $H^1$ so this subsequence
  will converge strongly to $\phi$ in $H^1_0$.

  From Step 2, we also know that $u_{n_k +1}\to \phi$, strongly in
  $H^1_0$.  For any $\psi \in H^1_0$,
  \[
  \inner{u_{n_k +1}}{\psi}_{H^1} =M[u_{n_k}]^\gamma
  \inner{\abs{u_{n_k}}^{p-1}u_{n_k}}{\psi},
  \]
  and from the strong $H^1$ convergence of $\left\{u_{n_k +1}\right\}$
  and Step 1, we have
  \begin{align*}
    \inner{u_{n_k +1}}{\psi}_{H^1} &\to \inner{\phi}{\psi}_{H^1}, \\
    M[u_{n_k}]^\gamma  \inner{\abs{u_{n_k}}^{p-1}u_{n_k}}{\psi}  &\to \inner{\abs{\psi}^{p-1}\psi}{\psi}.
  \end{align*}
  Thus, the limit $\phi$ is a weak solution of \eqref{e:boundstate}.
  Finally, by \eqref{W2bd} we get the improved regularity of $\phi$.

  \noindent{\bf Step 4.} When $\p \Omega$ is smooth, we claim that
  $u_n \to {\phi}$ in $C^k$ and that $\phi$ is a $C^\infty$ solution
  to \eqref{e:boundstate}.  Since $u_n \in C^{k,{\beta}}$ for, say,
  $k\geq 3$, by {the compact embedding of $C^{k,{\beta}}$ in $C^k$},
  there exists a subsequence $\{ u_{n_k} \} \subset \{ u_n \}$ and a
  limit ${\phi} \in C^k$ such that $u_{n_k} \to {\phi}$ strongly in
  $C^k$.

  Now, from the iteration scheme,
  \[
  u_{n_k+1} = (I - \Delta)^{-1} \LC M [u_{n_k}]^\gamma |u_{n_k}|^{p-1}
  u_{n_k} \RC
  \]
  is a strongly convergent sequence in $C^{k+2}$.  Therefore,
  $\{ u_{n_k+1} \}$ is a Cauchy sequence in $C^k$ with a limit
  ${\overline{\phi}}$.  However, by Step 2, as $n_k$ tends to
  infinity,
  \[
  \LN {\phi} - {\overline{\phi}} \RN_{H^1} \leq \norm{\phi -
    u_{n_k}}_{H^1} + \norm{u_{n_k} - u_{n_k+1}}_{H^1} +
  \norm{u_{n_k+1} - {\overline{\phi}}}_{H^1}
  \]
  goes to zero.  Therefore, ${\overline{\phi}} = {\phi}$.  Returning
  to the scheme, we find $u_{n_k+1} \to {\phi}$ and
  \[
  (I - \Delta ) {\phi} = |{\phi}|^{p-1} {\phi}
  \]
  since $M[u_n] \to 1$ for all $n$.

\end{proof}

\begin{remark}
  It is natural to ask whether the sequence generated by the
  Petviashvilli iteration is, in fact, Cauchy; however, we are unable
  to show this, unless the limit $\phi$ satisfies the assumptions of
  Theorem~\ref{t:local} (see Corollary~\ref{c:combo}).  On the other
  hand, one can follow Step 3 of the proof of Theorem~\ref{t:global}
  and use induction to show for any fixed $\ell \in \mathbb{Z}^+$ that
  $u_{n_k + \ell}$ is a subsequence converging to ${\phi}$.

  In order to prove that a.e. sequence is Cauchy, it would be
  sufficient to show that
  $| 1 - M[u_n]^{\gamma-1} | \leq C n ^{-\sigma}$ for some
  $\sigma >1$.

\end{remark}

\begin{proof}[Corollary \ref{c:combo}]
  If the subsequential limit, $\phi$, satisfies the spectral
  condition, then Theorem \ref{t:local} applies to it.  Some element
  of the subsequence will be in the local basin of attraction of
  $\phi$, and the linear theory implies the whole sequence converges
  to $\phi$.
\end{proof}


\subsection{Tendency to the Groundstate}
\label{ss.ground}
As our numerical experiments show, \eqref{e:petvia_iter} appears to
always produce a ground state solution.  Here, we provide some insight
into this behavior.  We first define a modified energy,
\begin{equation*} \label{e:modE}
  E_M[\psi] \equiv \begin{cases}  {1\over 2} \LN \psi \RN^2_{H^1} - {1\over p+1} M[\psi] \LN \psi \RN^{p+1}_{L^{p+1}}  & M[\psi] \leq 1, \\
    + \infty & M[\psi] > 1.
  \end{cases}
\end{equation*}
In the case that $M[\psi] =1$, we recover the standard energy
$E_1 [\psi] := E[\psi]$, as defined by \eqref{e:egroundstate}.

We can rewrite $E_M[\psi]$ as
\begin{equation}
  \label{e:modE2} 
  E_M[\psi] = \LN \psi \RN^2_{H^1}\LB {1\over 2} - {1\over p+1}\RB .
\end{equation}

\begin{theorem}
  \label{t:minimizer}
  Let $u_n$ be an iteration generated by \eqref{e:petvia_iter} with
  $u_0$ nontrivial initial data and $\gamma = {p \over p-1}$.  Then
  \begin{equation}
    \label{e:Liapunov1} 
    \liminf_{n \to \infty} E_1[u_n]\leq E_M[u_0].
  \end{equation}

  Given the solution $\phi$ extracted from Theorem~\ref{t:global}, we
  possess
  \begin{equation}
    \label{e:Liapunov2} 
    E_1[\phi] \leq E_M[u_0].
  \end{equation}
\end{theorem}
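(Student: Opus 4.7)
The plan is to exploit the fact that at $\gamma = \gamma_\star$ the exponent $\alpha$ in \eqref{e:alphadef} vanishes, so that \eqref{e:Mmonotone} forces $M[u_{n+1}] \leq M[u_n]^0 = 1$ for every $n \geq 0$. Consequently, $E_M$ is finite along the entire tail of the sequence and, by \eqref{e:modE2}, reduces to $E_M[u_n] = \left(\tfrac{1}{2} - \tfrac{1}{p+1}\right)\|u_n\|_{H^1}^2$ for all $n \geq 1$.

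The core step is to establish monotonicity of $\|u_n\|_{H^1}$. I would return to \eqref{e:phiH1grow} and substitute $\gamma - 1 = \tfrac{1}{p-1}$, which causes the prefactor $M[u_n]^{\gamma-1}\bigl(M[u_{n+1}]/M[u_n]\bigr)^{1/(p-1)}$ to collapse to $M[u_{n+1}]^{1/(p-1)}$. Combined with $M[u_{n+1}] \leq 1$, this gives
\begin{equation*}
\|u_{n+1}\|_{H^1}^2 \;\leq\; M[u_{n+1}]^{1/(p-1)}\,\|u_n\|_{H^1}^2 \;\leq\; \|u_n\|_{H^1}^2, \qquad n \geq 0,
\end{equation*}
so the $H^1$ norm is non-increasing from $u_0$ on, regardless of whether $M[u_0]$ exceeds one.

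To prove \eqref{e:Liapunov1}, I would dispose of the case $M[u_0]>1$ trivially ($E_M[u_0]=+\infty$), and otherwise combine the monotonicity above with the elementary observation that $M[u_n] \leq 1$ implies $\|u_n\|_{L^{p+1}}^{p+1} \geq \|u_n\|_{H^1}^2$, hence $E_1[u_n] \leq E_M[u_n] \leq E_M[u_0]$ for every $n \geq 1$. This is a pointwise bound, strictly stronger than the claimed $\liminf$ inequality. For \eqref{e:Liapunov2}, I would use that the subsequential limit $\phi$ produced by Theorem \ref{t:global} is a strong solution of \eqref{e:boundstate}, so $M[\phi] = 1$ and $E_1[\phi] = E_M[\phi] = \left(\tfrac{1}{2}-\tfrac{1}{p+1}\right)\|\phi\|_{H^1}^2$. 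Passing to the limit along the strongly $H^1$-convergent subsequence $u_{n_k} \to \phi$ in the bound $\|u_{n_k}\|_{H^1}^2 \leq \|u_0\|_{H^1}^2$ then yields the desired inequality.

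The only delicate point, rather than a genuine obstacle, is carefully verifying the exponent collapse in \eqref{e:phiH1grow} so that monotonicity holds starting from the very first iterate; everything else is bookkeeping built directly on Lemma \ref{l.ineqs} together with the subsequential convergence established in Step 3 of the proof of Theorem \ref{t:global}.
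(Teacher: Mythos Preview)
Your argument has a genuine gap at the ``exponent collapse'' step. You take \eqref{e:phiH1grow} at face value, but if you rederive it from the chain immediately preceding it---namely
\[
M[u_n]^{\gamma-1}\;\geq\;\frac{\|u_{n+1}\|_{H^1}^2}{\|u_n\|_{H^1}^2}\left(\frac{M[u_{n+1}]}{M[u_n]}\right)^{1/(p-1)}
\]
---you see that solving for $\|u_{n+1}\|_{H^1}^2$ produces the \emph{inverted} ratio:
\[
\|u_{n+1}\|_{H^1}^2\;\leq\;M[u_n]^{\gamma-1}\left(\frac{M[u_n]}{M[u_{n+1}]}\right)^{1/(p-1)}\|u_n\|_{H^1}^2.
\]
The displayed form of \eqref{e:phiH1grow} is a typo (harmless for the paper's own use, since there both $M$-values tend to $1$). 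With the correct ratio and $\gamma-1=\tfrac{1}{p-1}$, the prefactor becomes $M[u_n]^{2/(p-1)}M[u_{n+1}]^{-1/(p-1)}$, which is not bounded by $1$: nothing prevents $M[u_{n+1}]$ from being smaller than $M[u_n]^2$. So your pointwise monotonicity $\|u_{n+1}\|_{H^1}\leq\|u_n\|_{H^1}$ is unproven, and both \eqref{e:Liapunov1} and \eqref{e:Liapunov2} in your argument rest on it.

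The paper circumvents this by deriving (directly from \eqref{e:revSob}) the single-step estimate
\[
\frac{\|u_{n+1}\|_{H^1}^2}{\|u_n\|_{H^1}^2}\;\leq\;\left(\frac{M[u_n]}{M[u_{n+1}]}\right)^{2/(p-1)},
\]
and then \emph{telescoping} from $k$ to $n{+}1$ so that the intermediate $M$-values cancel, leaving $E_M[u_{n+1}]/E_M[u_k]\leq\bigl(M[u_k]/M[u_{n+1}]\bigr)^{2/(p-1)}$. Only after invoking $M[u_{n+1}]\to 1$ (Step~1 of the proof of Theorem~\ref{t:global}) does one obtain the bound, and this is why the conclusion is a $\liminf$ statement rather than the pointwise monotonicity you claim. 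Your observation $E_1[u_n]\leq E_M[u_n]$ when $M[u_n]\leq 1$ is correct and useful, but it needs to be combined with the telescoped estimate, not with step-by-step monotonicity.
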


The proof of Theorem~\ref{t:minimizer} follows from
Lemma~\ref{l:Eminimizer} below.

\begin{lemma}
  \label{l:Eminimizer}
  
  Assume that $\gamma=\gamma_\star=p/(p-1)$ and that
  \begin{equation}
    0 < M[u_k] \leq M[u_{n} ] \leq 1
  \end{equation}
  for some $0 \leq k <n$. Then
  \begin{equation*}
    \label{e:Emonotone} 
    E_M[u_{n}] \leq E_M [u_k],
  \end{equation*}
  with the inequality strict if $M[u_{n}] < M[u_k]$.

  In particular, if $0 < M[u_k] \leq 1$, then
  \begin{equation*} \label{e:Emonotone2} \liminf_{n\to\infty} E_1
    [u_n]\leq E_M [u_k].
  \end{equation*}
\end{lemma}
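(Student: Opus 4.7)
The plan is to reduce the lemma to a pure $H^1$-norm comparison and then extract that comparison from a single algebraic identity obtained by telescoping \eqref{e:revSob}. Since $\gamma = p/(p-1)$ gives $\alpha = 0$, \eqref{e:Mmonotone} already forces $M[u_j] \leq 1$ for all $j \geq 1$, so the alternative form \eqref{e:modE2} applies and $E_M[u_j] = C_p \|u_j\|_{H^1}^2$ with $C_p = \tfrac{1}{2} - \tfrac{1}{p+1} > 0$. Thus proving $E_M[u_n] \leq E_M[u_k]$ is equivalent to proving $\|u_n\|_{H^1} \leq \|u_k\|_{H^1}$.

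The central step is to iterate \eqref{e:revSob} from index $k$ up to $n$, obtaining $\|u_n\|_{H^1}/\|u_n\|_{L^{p+1}} \leq \|u_k\|_{H^1}/\|u_k\|_{L^{p+1}}$, raise both sides to the $(p+1)$-th power, and then substitute $\|u_j\|_{L^{p+1}}^{p+1} = \|u_j\|_{H^1}^2/M[u_j]$, which is immediate from the definition of $M$. After rearrangement this gives
\[
M[u_n]\,\|u_n\|_{H^1}^{p-1} \leq M[u_k]\,\|u_k\|_{H^1}^{p-1},
\]
equivalently $\|u_n\|_{H^1}^{p-1} \leq (M[u_k]/M[u_n])\,\|u_k\|_{H^1}^{p-1}$. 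Under the hypothesis $M[u_k] \leq M[u_n]$ the ratio on the right is at most one, so $\|u_n\|_{H^1} \leq \|u_k\|_{H^1}$, giving the monotonicity of $E_M$. Strictness under the intended hypothesis $M[u_k] < M[u_n]$ (the strict clause as written in the lemma appears to contain a typo) is immediate from the strict version of the same chain.

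For the ``in particular'' statement I would invoke Step~1 of the proof of Theorem~\ref{t:global}, which establishes $M[u_n] \to 1$. Passing to the $\limsup$ in the displayed inequality gives $\limsup_n \|u_n\|_{H^1}^{p-1} \leq M[u_k]\,\|u_k\|_{H^1}^{p-1} \leq \|u_k\|_{H^1}^{p-1}$, using $M[u_k] \leq 1$. Combined with the elementary pointwise bound $E_1[u_n] \leq E_M[u_n]$, valid whenever $M[u_n] \leq 1$ directly from the defining formulas, this yields $\liminf_n E_1[u_n] \leq \limsup_n E_M[u_n] \leq E_M[u_k]$, simultaneously handling $M[u_k] < 1$ and $M[u_k] = 1$ without a case split.

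I do not anticipate a real obstacle here: the entire argument collapses onto the single identity above, and the only non-trivial observation is that telescoping the one-step reverse Sobolev inequality \eqref{e:revSob} already encodes both the finite monotonicity claim and the $\liminf$ claim, with the spectral convergence $M[u_n] \to 1$ supplying exactly what is needed to pass from the former to the latter.
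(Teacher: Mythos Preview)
Your proposal is correct and follows essentially the same route as the paper: both arguments combine the telescoped reverse Sobolev inequality \eqref{e:revSob} with the identity $\|u_j\|_{L^{p+1}}^{p+1} = \|u_j\|_{H^1}^2/M[u_j]$ to obtain the key inequality $M[u_n]\,\|u_n\|_{H^1}^{p-1} \leq M[u_k]\,\|u_k\|_{H^1}^{p-1}$ (the paper writes this as a ratio bound on $E_M[u_{n+1}]/E_M[u_k]$), and then invoke $M[u_n]\to 1$ for the $\liminf$ statement. Your observation about the typo in the strictness clause is also correct.
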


\begin{proof}
  We first obtain the estimate,
  \begin{equation*}
    {M[u_{n+1}] \over M[u_n] } = { \LN u_{n+1} \RN^2_{H^1} \over \LN
      u_{n} \RN^2_{H^1} } {\LN u_{n} \RN^{p+1}_{L^{p+1}} \over \LN
      u_{n+1} \RN^{p+1}_{L^{p+1}} } \leq { \LN u_{n} \RN^{p-1}_{H^1}
      \over \LN u_{n+1} \RN^{p-1}_{H^1} }\Rightarrow  { \LN u_{n+1} \RN^{2}_{H^1} \over \LN u_{n} \RN^{2}_{H^1} } \leq \LC
    {M[u_{n}] \over M[u_{n+1}] }\RC^{2 \over p-1}.
  \end{equation*}

  We now consider the telescoping product and use \eqref{e:modE2}:
  \begin{equation*}
    \begin{split}
      & {E_M[u_{n+1}] \over E_M[u_{n}]} {E_M[u_{n}] \over
        E_M[u_{n-1}]} \cdots {E_M[u_{k+1}] \over E_M[u_{k}]}  \\
      &\leq \LC {M[u_{n}] \over M[u_{n+1}] }\RC^{2 \over p-1} \LC
      {M[u_{n-1}] \over M[u_{n}] }\RC^{2 \over p-1} \cdots \LC
      {M[u_{k}]\over M[u_{k+1}] }\RC^{2 \over p-1} .
    \end{split}
  \end{equation*}
  Therefore,
  \begin{align*}
    {E_M[u_{n+1}] \over E_M[u_{k}]} \leq \LC {M[u_{k}] \over
    M[u_{n+1}] }\RC^{2 \over p-1} .
  \end{align*}
  Letting $n \to \infty$ yields
  \begin{align*}
    \liminf_{n\to\infty} { E_1[{u_n}] \over E_M[u_{k}]} \leq \LC {M[u_{k}] }\RC^{2 \over p-1} \leq 1.
  \end{align*}

\end{proof}

We conclude with a proof of the theorem.
\begin{proof}[Proof of Theorem~\ref{t:minimizer}]
  The proof follows from noting that \eqref{e:Liapunov1} holds when
  $M[u_0] \leq1$, and \eqref{e:Liapunov1} holds trivially if
  $M[u_0] \geq 1$.  Inequality \eqref{e:Liapunov2} follows by looking
  at the iteration in Lemma~\ref{l:Eminimizer} with sequence elements
  $u_{n_k}$ from Theorem~\ref{t:global}.
\end{proof}

\section{Examples}
\label{s:examples}

In this section, we present a few numerical examples illustrating the
algorithm.

\subsection{1D Examples}

For a first example, we consider the following instance of
\eqref{e:boundstate}
\begin{equation}
  \label{e:boundstate_1d}
  \phi''-\phi + \abs{\phi}^2\phi = 0, \quad \phi(\pm \Xmax) =0,
\end{equation}
for some $\Xmax >0$.  The advantage of considering such a problem is
that its solution can be expressed in terms of a Jacobi
elliptic\footnote{We use the definition $\cn = \cn(x;m)$ rather than
  $\cn = \cn(x; k^2)$.}  function as,
\begin{equation*}
  \label{e:boundstate_1dsoln}
  \phi(x) = (1+\beta^2) \cn\paren{\beta x; \tfrac{1}{2}(1 + \beta^{-2})},
\end{equation*}
where $\beta>0$ solves
$ \cn\paren{\beta \Xmax; \tfrac{1}{2}(1 + \beta^{-2})} = 0$.

\subsection{Profiles}
We discretize \eqref{e:boundstate_1d} with piecewise linear finite
elements on $N+1$ uniformly spaced mesh points in $(-\Xmax, \Xmax)$,
and we approximate the nonlinear term as
\begin{equation}
  \abs{\sum u_j \varphi_j}^{p-1} \paren{\sum u_j \varphi_j} \approx
  \sum \abs{u_j}^{p-1} u_j \varphi_j.
\end{equation}

In this problem, we take $\gamma = \gamma_\star = p/(p-1)$.
Throughout, $\Xmax =2$.  As we seek a positive solution of
\eqref{e:boundstate_1d}, as an initial guess, we take
\begin{equation}
  \label{e:boundstate_1d_guess}
  u_0 = (\Xmax - x)(\Xmax+x).
\end{equation}

The profiles, obtained {via the Petviashvilli iteration} with
different values of $N$, are plotted in Figure \ref{f:1dprofiles}.
All are in visual agreement with the exact solution.  Turning to
convergence, in Figure \ref{f:1dconvergence}, we examine the $H^1$
error and the distance of $M[u_n]$ to 1.  Several desirable properties
are seen.  First, as our analysis has been of the semi-discrete
algorithm (without having introduced a mesh), we would hope to have
mesh insensitivity; the convergence of $M[u_n]$ is relatively
indifferent to $N$.  Next, we see in the $H^1$ convergence figure that
the error reaches a plateau.  This is to be expected, as we must
contend with both discretization and algorithmic error.  We believe
these plateaus reflect the error between the analytic solution and the
discretized one, and the reduction of error with mesh refinement is
consistent with this.

Finally, convergence Figure~\ref{f:1dconvergence} indicates that,
though we let the algorithm run for 100 iterations, we could have
terminated after far fewer, perhaps 10-20 iterations, and obtained a
high quality solution.

\begin{figure}
  \begin{center}
    \includegraphics[width=7cm]{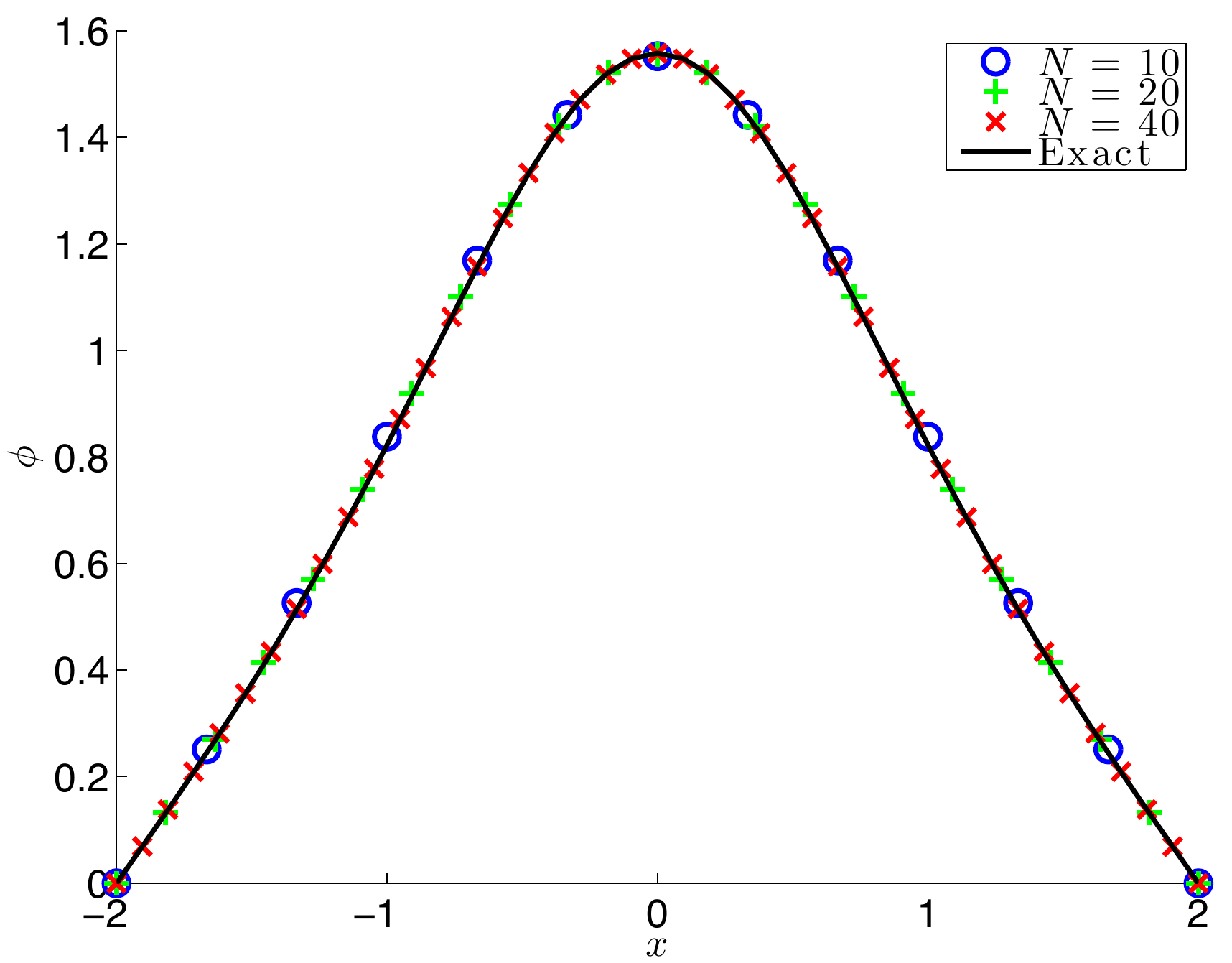}
  \end{center}
  \caption{Solutions to \eqref{e:boundstate_1d} with $\Xmax =2$ at
    different resolutions.  These are the profiles obtained after 100
    iterations of the algorithm.}
  \label{f:1dprofiles}
\end{figure}

\begin{figure}
  \subfigure[]{\includegraphics[width=6.35cm]{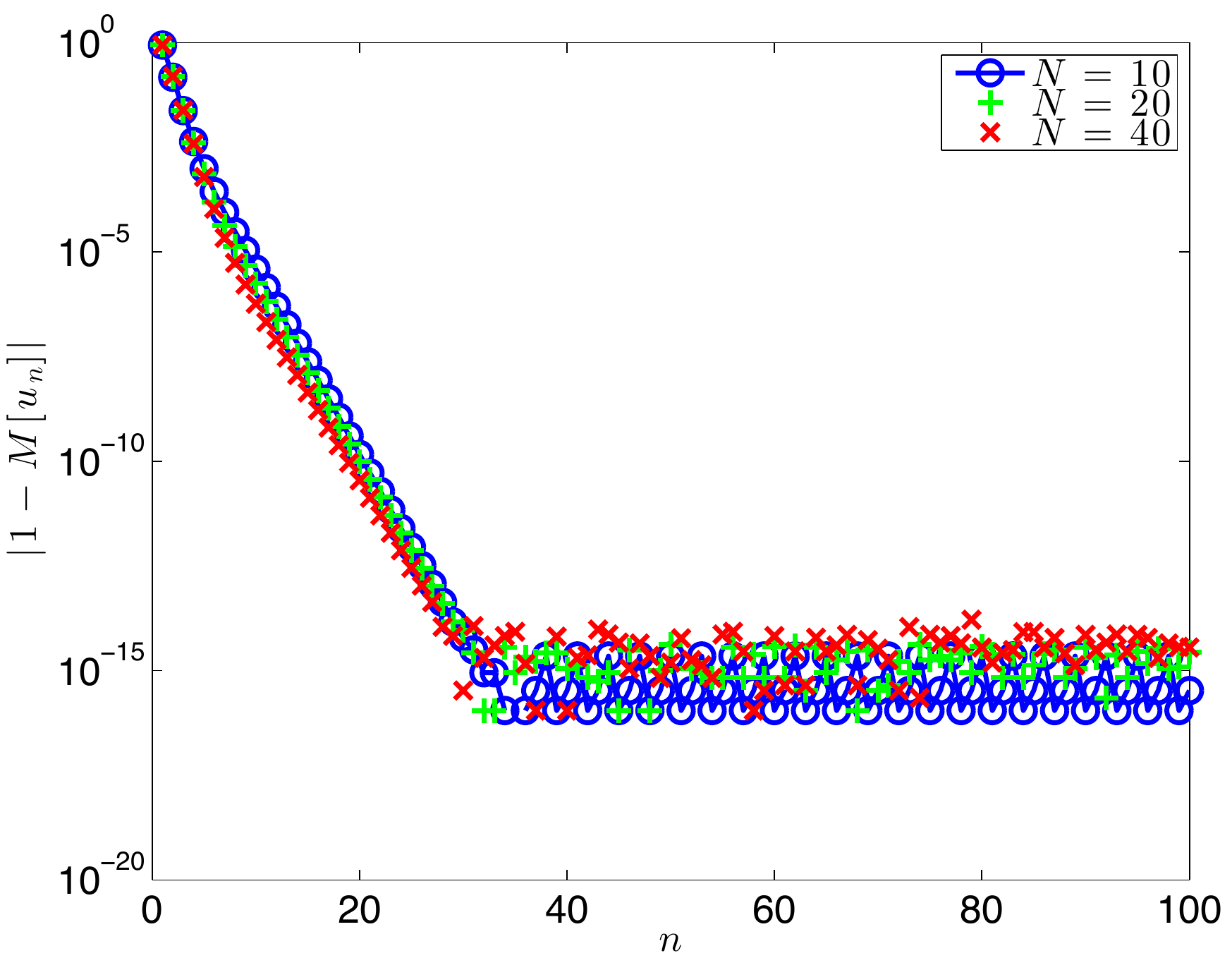}}
  \subfigure[]{\includegraphics[width=6.35cm]{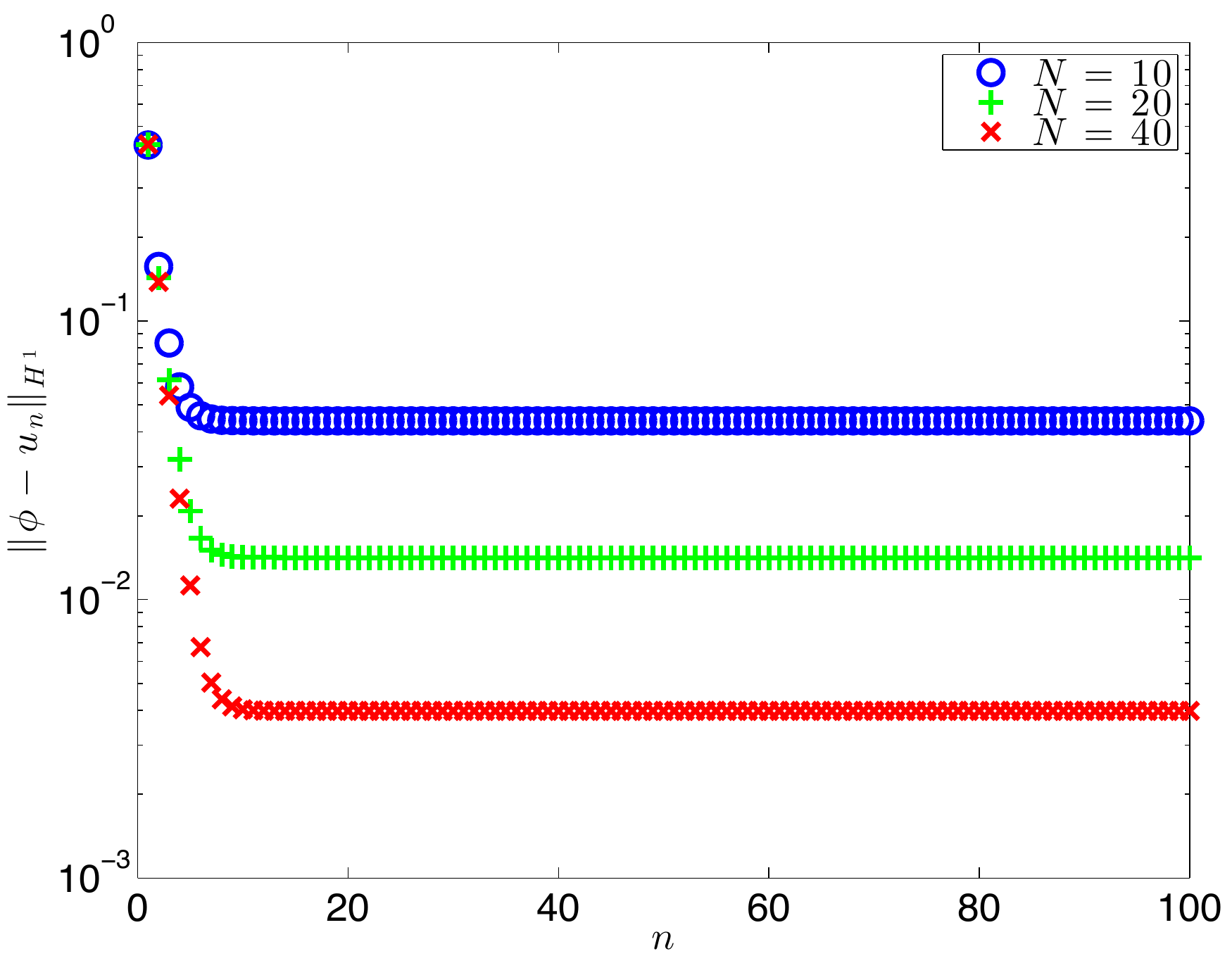}}
  \caption{Convergence of the algorithm for \eqref{e:boundstate_1d},
    measured in terms of $|M[u_n]-1|$ and the $H^1$ error against the
    exact solution.}
  \label{f:1dconvergence}
\end{figure}

\subsection{Spectral Analysis and Linear Convergence}

Key to our linearized analysis {was the spectral assumption that $p$
  was the only eigenvalue of $\calL^{-1}(p\abs{\phi}^{p-1})$ that was
  outside the unit disk}, Assumption \ref{a:spectral}.  In addition,
Proposition \ref{p:lineariter} tells us that, in the case
$\gamma =\gamma_\star$, the contraction mapping constant, $\theta$,
will be the largest eigenvalue smaller than $p$.

Using the solution we have computed, we numerically solve the
equivalent generalized eigenvalue problem
\begin{equation}
  \label{e:calLeig}
  p\abs{\phi}^{p-1} \psi = \nu \calL \psi, 
\end{equation}
and find that this is indeed the case.  The numerically computed
spectrum for different values of $N$ are plotted in Figure
\ref{f:spectrum1d}.  As hoped for, the only eigenvalue in excess of
one is $p=3$, and the spectrum is nonnegative.

\begin{figure}
  \subfigure[]{\includegraphics[width=6.35cm]{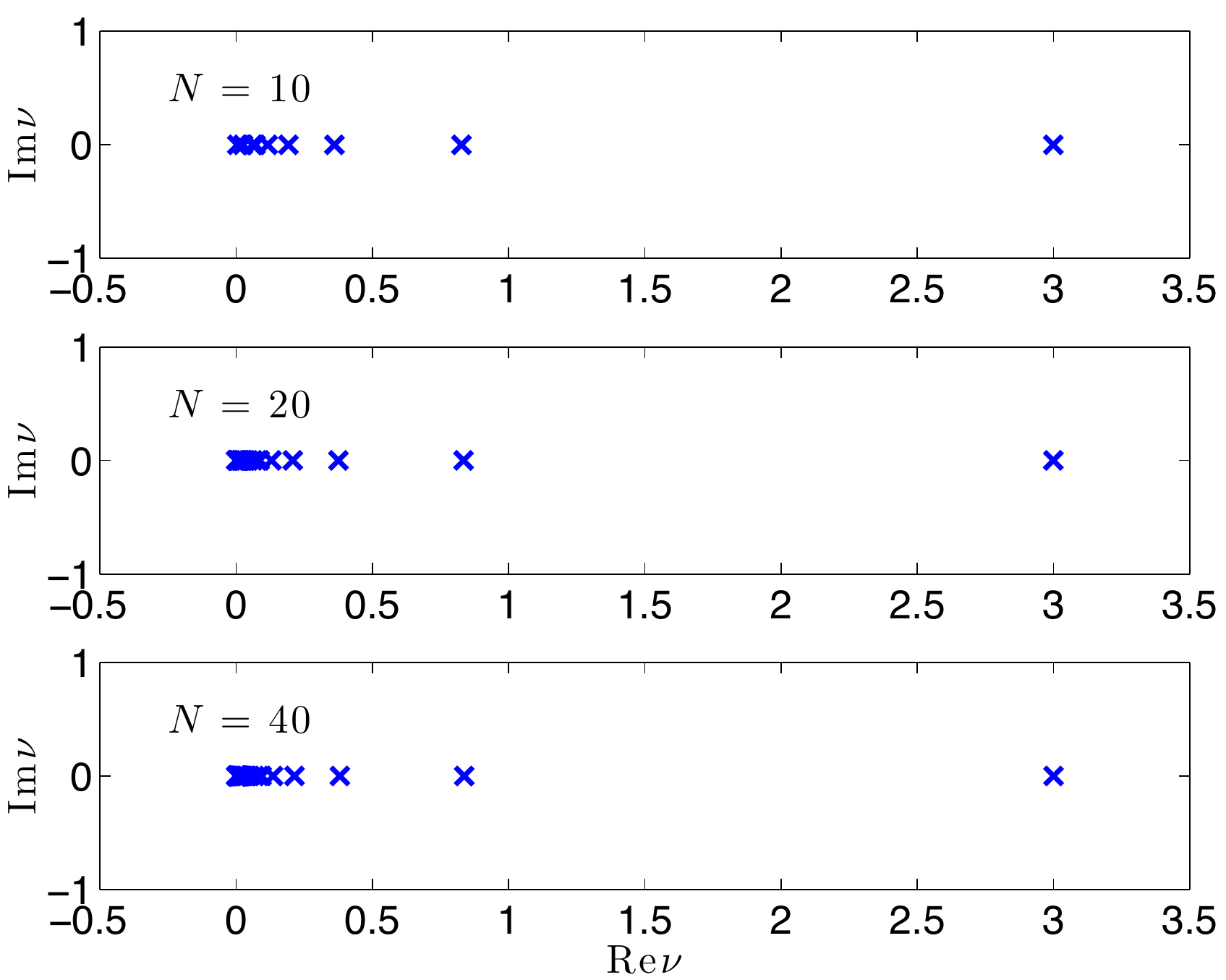} \label{f:spectrum1d}}
  \subfigure[]{\includegraphics[width=6.35cm]{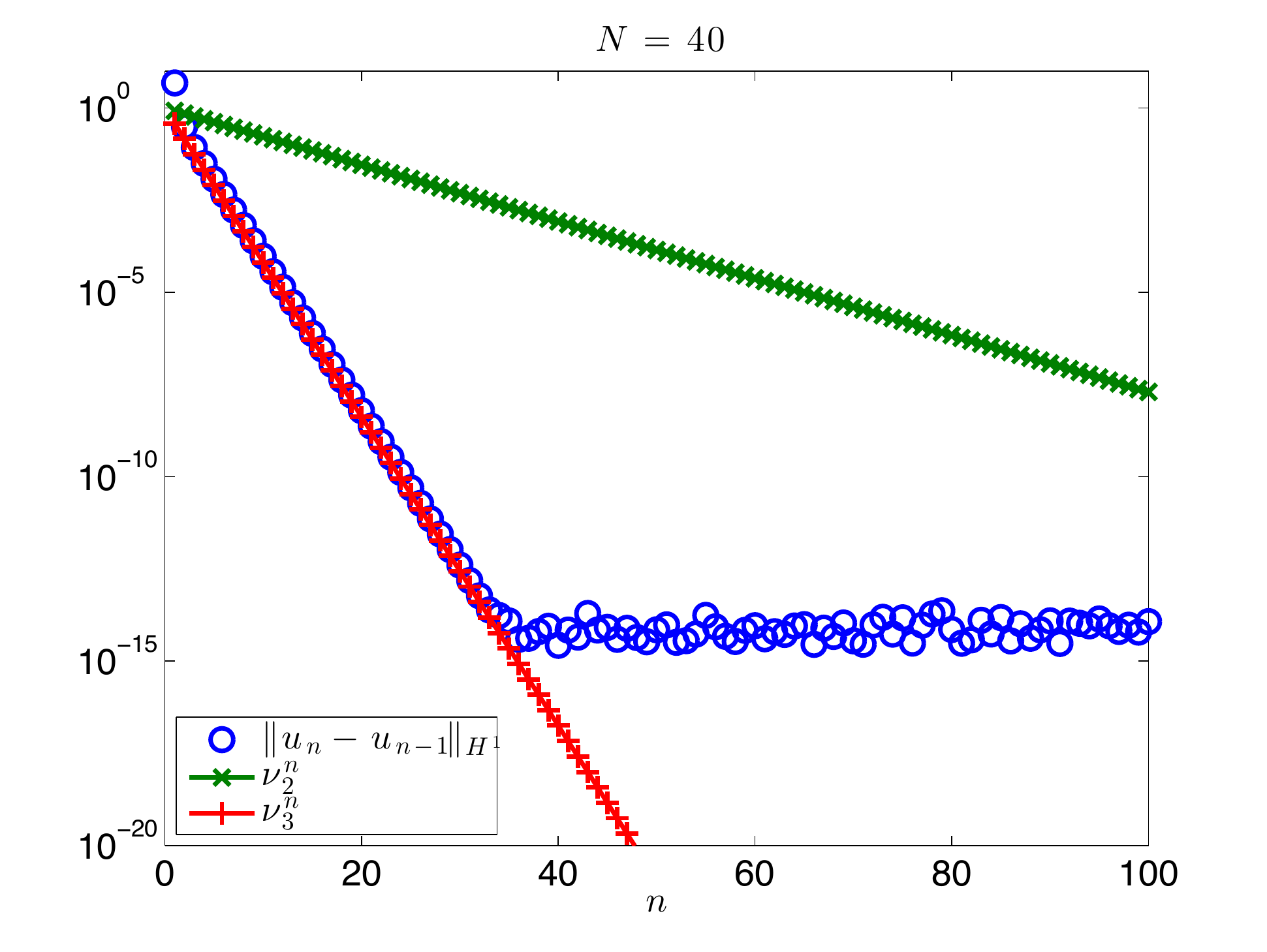} \label{f:analysis1d}}

  \caption{On the left, we have the spectrum of the eigenvalue problem
    \eqref{e:calLeig} with different numbers of mesh points.  In all
    cases, there is a single eigenvalue at $p=3$, and the next largest
    eigenvalue is near $.84<1$.  On the right, we have successive
    differences, measured in the $H^1$ norm, of the iteration scheme.
    Note that while $\mu_\star$ is $\nu_2\approx .84$ from Figure
    \ref{f:spectrum1d}, the convergence rate follows $\nu_3^n$, with
    $\nu_3\approx.38$, the next largest eigenvalue.}

\end{figure}

This begs the question as to whether or not, when $u_n$ is
sufficiently close to $\phi$, the algorithm truly obeys
\eqref{e:linear_conv}.  In Figure \ref{f:analysis1d}, we plot
$\norm{u_{n+1} - u_n}_{H^1}$, which, if we are sufficiently close to
$\phi$, should satisfy the contraction
\begin{equation}
  \norm{u_{n+1} - u_n}_{H^1} \leq \mu_\star\norm{u_{n} - u_{n-1}}_{H^1}.
\end{equation}
Recall that $\mu_\star$ is the largest eigenvalue beneath $p$
associated with \eqref{e:calLeig}.  We plot this difference in Figure
\ref{f:analysis1d}.  Note $\mu_\star= \nu_2\approx .84$, but the
successive error, $\norm{u_{n+1} - u_n}_{H^1}$, does not follow
$\nu_2^n$.  Instead, it follows $\nu_3^n$, $\nu_3 \approx .38$ being
the next smallest eigenvalue.




This can be understood in terms of symmetry.  Problem
\eqref{e:boundstate_1d} is a symmetric problem with even solution, the
iteration scheme preserves symmetry, and the initial guess
\eqref{e:boundstate_1d_guess} is even.  Thus, we are restricted to the
closed subspace of even $H^1_0(-\Xmax,\Xmax)$ functions.  The
eigenvalue, $\nu_2$, corresponds to an odd eigenfunction, and thus
plays no role.

\subsection{Robustness to the Initial Condition}

In the preceding example, our initial guess
\eqref{e:boundstate_1d_guess} is a highly informed choice; it is
signed, smooth, and even.  This begs the question of how the algorithm
behaves when a poor guess is made.  Let us try run the algorithm with
a ``rough'' starting guess
\begin{equation}
  \label{e:rough1d}
  u_0 = \sum_{j} c_j \varphi_j, \quad c_j \sim U(-1,1),
\end{equation}
where $\varphi_j$ are the hat function finite elements, and $c_j$ are
independent, identically distributed, uniform random variables.  We
also consider the asymmetric, but smooth, initial condition
\begin{equation}
  u_0 = \tfrac{1}{2}(\Xmax-x)^2 (\Xmax +x),
\end{equation}
and the symmetrization of \eqref{e:rough1d}
\[
\frac{1}{2}\paren{u_0^{\text{rough}}(x) + u_0^{\text{rough}}(-x)}.
\]
These initial guesses, along with the solution are shown in Figure
\ref{f:rough_profiles1d}.  For all guesses, we obtain the solution,
and as shown in Figure \ref{f:rough_profiles1d_convergence}, the
algorithm is relatively insensitive to the roughness of the data.
Instead, it is the symmetry that dominates the problem.  These
computations reflect Theorem \ref{t:global} and Corollary \ref{c:combo}, which
tell us that, generically, a subsequential limit exists, and if this
limit is linearly stable, then the entire sequence converges to it.

\begin{figure}
  \subfigure{\includegraphics[width=6.35cm]{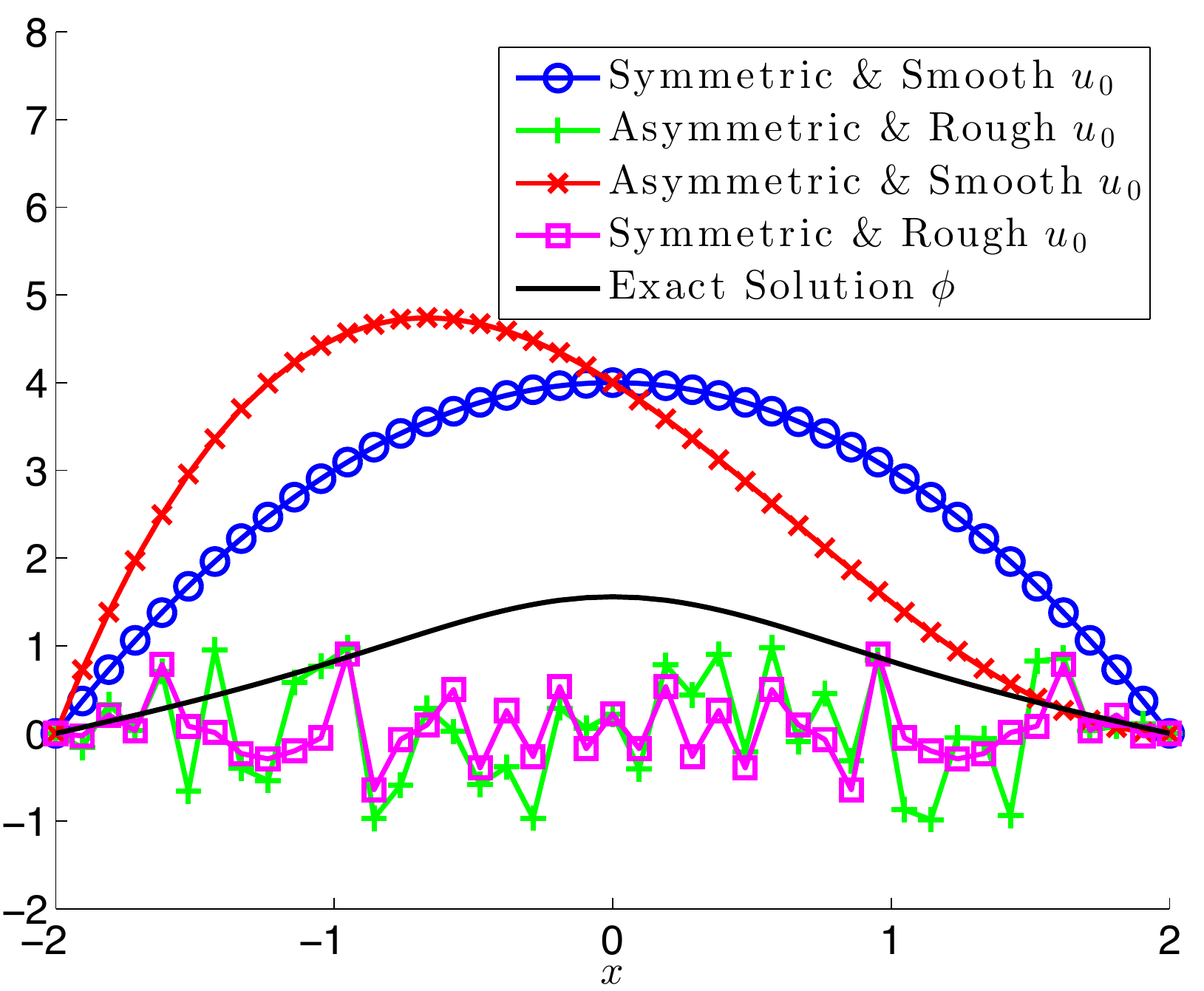} \label{f:rough_profiles1d}}
  \subfigure{\includegraphics[width=6.35cm]{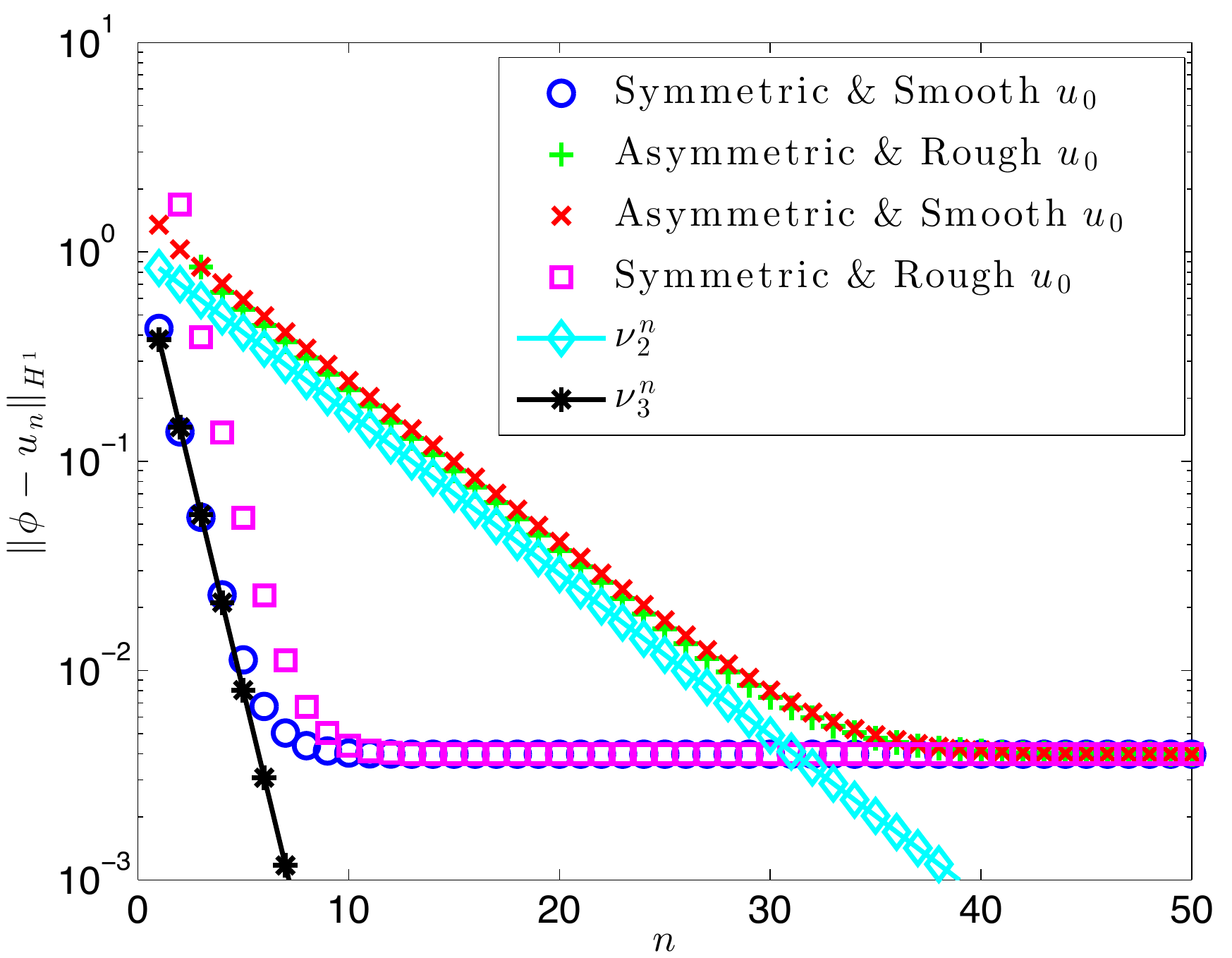} \label{f:rough_profiles1d_convergence}}

  \caption{Initial profiles used to solve
    \eqref{e:boundstate_1d_guess} and the convergence of the algorithm
    towards a solution. Symmetric initial conditions converge at a
    rate of $\nu_3< \nu_2$ while asymmetric ones converge at this
    latter, slower rate.}

\end{figure}

\subsection{2D Examples with Radial Symmetry}

Next, we consider solving the problem in 2D under a radial symmetry
assumption.  This reduces it to a 1D problem on $(0, \Rmax)$ with a
Neumann boundary condition at the origin:
\begin{equation}
  \label{e:radial}
  -\partial_r^2 \phi- \tfrac{1}{r} \partial_r\phi + \phi -
  \abs{\phi}^2\phi = 0, \quad \left.\partial_r\phi \right|_{r=0} =
  \phi(\Rmax) = 0.
\end{equation}
Starting from the initial guess,
\begin{equation}
  \label{e:radu0}
  u_0(r) = \Rmax^2 - r^2,
\end{equation}
we compute the ground state using piecewise linear finite elements on
the domain $(0, 25)$.  With five hundred uniformly spaced elements in
this interval, we obtain the ground state solution shown in Figure
\ref{f:2d_radial}.

\begin{figure}
  \subfigure[]{\includegraphics[width=6.45cm]{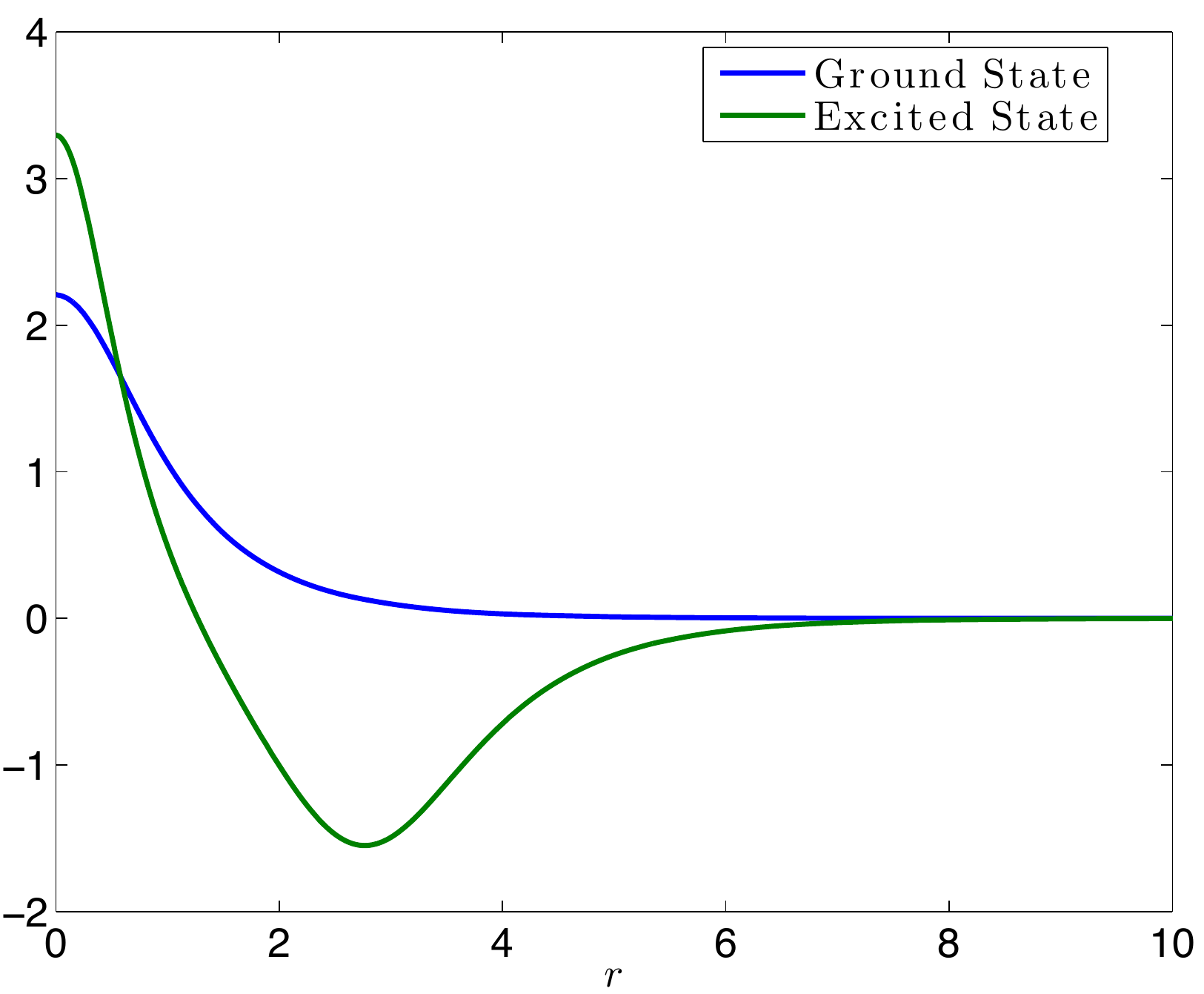}}
  \subfigure[]{\includegraphics[width=6.45cm]{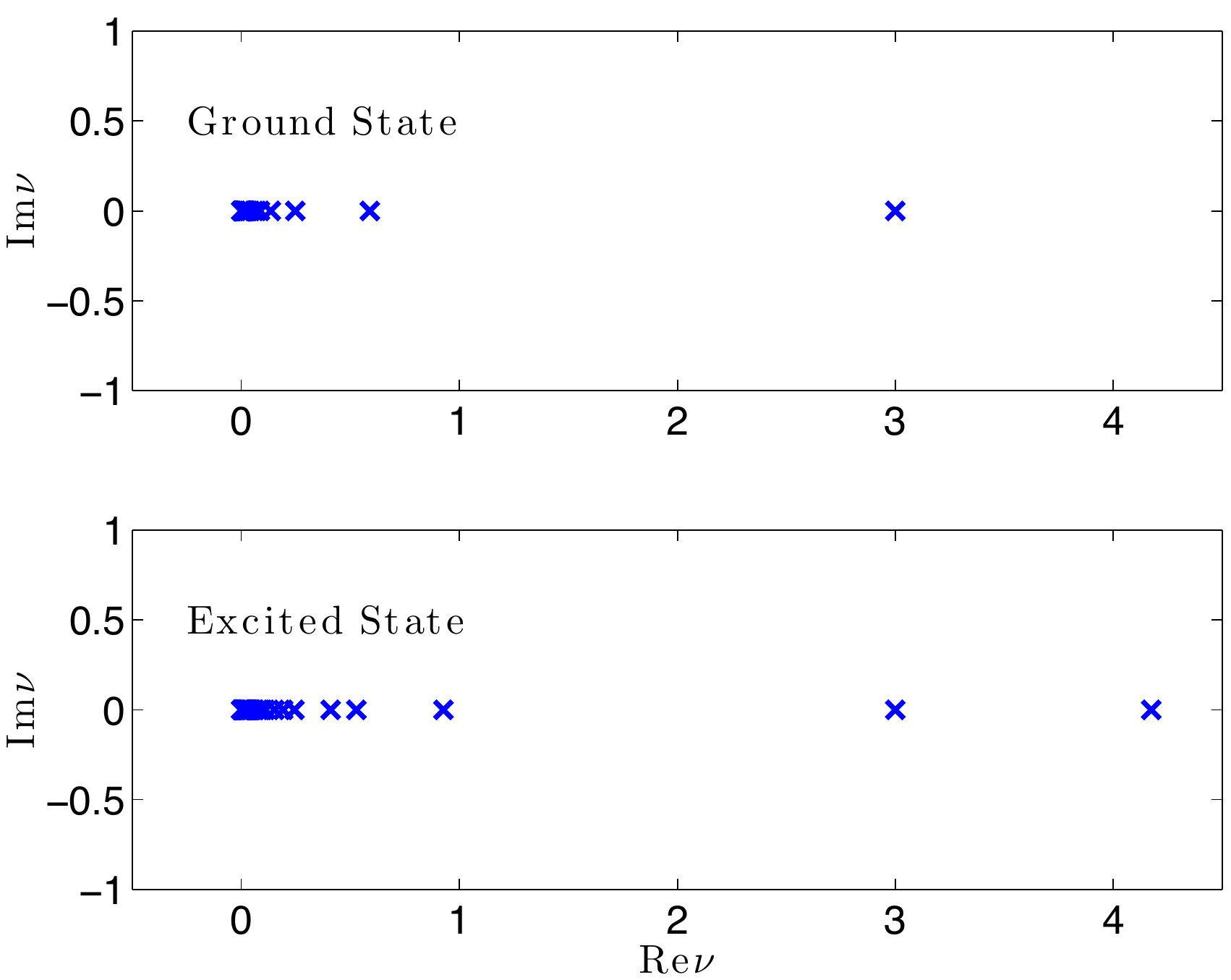}}
  \caption{The ground state and an excited state solution to
    \eqref{e:radial} computed on the domain $(0, 25)$, along with the
    spectrum of $\calL^{-1}(p\abs{\phi}^{p-1}\bullet)$ for each
    state. Both have the unstable eigenvalue at $p=3$, but the excited
    state has a second unstable eigenvalue. }
  \label{f:2d_radial}
\end{figure}

It is known that on the domain $(0, \infty)$, \eqref{e:radial} has
infinitely many real valued solutions with an arbitrary number of zero
crossings \cite{Sulem:1999kx}.  And yet, no matter the initial
condition $u_0$, the algorithm always appears to converge to a ground
state.  This begs the question of why.  Here, we adapt our algorithm
to compute an excited state solution, and, {\it a posteriori}, we
observe that it violates the spectral Assumption \ref{a:spectral},
introducing a linear instability.

Given $r_0 \in (0, \Rmax)$, let $\phi^{(0)}$ and $\phi^{(1)}$ solve
the two boundary value problems
\begin{gather}
  \label{e:radial1}
  \begin{split}
    -\partial_{r}^2\phi^{(0)}- \tfrac{1}{r}\partial_r\phi^{(0)} +
    \phi^{(0)} - |\phi^{(0)}|^2\phi^{(0)} = 0, \quad 0<r<r_0\\
    \left. \partial_r \phi^{(0)} \right|_{r=0} = \phi^{(0)}(r_0) = 0,
  \end{split}\\
  \label{e:radial2}
  \begin{split}
    -\partial_{r}^2\phi^{(1)}- \tfrac{1}{r}\partial_r\phi^{(1)} +
    \phi^{(1)} - |\phi^{(1)}|^2\phi^{(1)} = 0,\quad r_0<r<\Rmax\\
    \phi^{(1)}(r_0) = \phi^{(1)}(\Rmax)=0.
  \end{split}
\end{gather}
We then define the slope mismatch function as
\begin{equation}
  \label{e:slope_mistmatch}
  F(r_0) = \left. \partial_r \phi^{(0)}
  \right|_{r=r_0}-\left.\partial_r \phi^{(1)}\right|_{r=r_0}. 
\end{equation}
Wrapping a root finding algorithm around our solvers, we compute the
value of $r_0$ at which the slopes match, which yields the excited
state plotted in Figure \ref{f:2d_radial}.

Using these computed solutions, we then compute the linearized
spectrum plotted in Figure \ref{f:2d_radial}.  The excited state has a
second eigenvalue larger than one.  Thus, it is linearly unstable with
respect to Petviashvilli's method, and this is why it is not found
directly.  It remains to be studied why the root finding problem
associated with \eqref{e:slope_mistmatch} stabilizes the algorithm.

\subsection{2D Examples without Symmetry}

Finally, we demonstrate the algorithm on a domain that cannot be
reduced to 1D.  This is computed using FEniCS
\cite{LoggMardalEtAl2012a}.  The problem is studied using piecewise
linear triangular elements with parameters $p=3$ and
$\gamma = \gamma_\star$.

Consider the problem on the isosceles right triangle inscribed in
$[0,1]^2$.  Here, an unstructured mesh was generated using Triangle
\cite{shewchuk96b}.  As an initial guess, we take
\begin{equation}
  u_0 = \exp\set{-50 \bracket{ (x-\tfrac{1}{4})^2 + (y-\tfrac{1}{4})^2  }}.
\end{equation}
The solution, at several values of $n$, are displayed in Figure
\ref{f:tri_soln}, and the diagnostics, $\abs{M[u_n]-1}$ and
$\norm{u_{n+1} - u_n}_{H^1}$ are shown in Figure \ref{f:tri_diags}.
As in the 1D case, the algorithm rapidly enters the linear regime and
returns the ground state despite our choice of initial condition.

\begin{figure}
  \subfigure{\includegraphics[width=6.45cm]{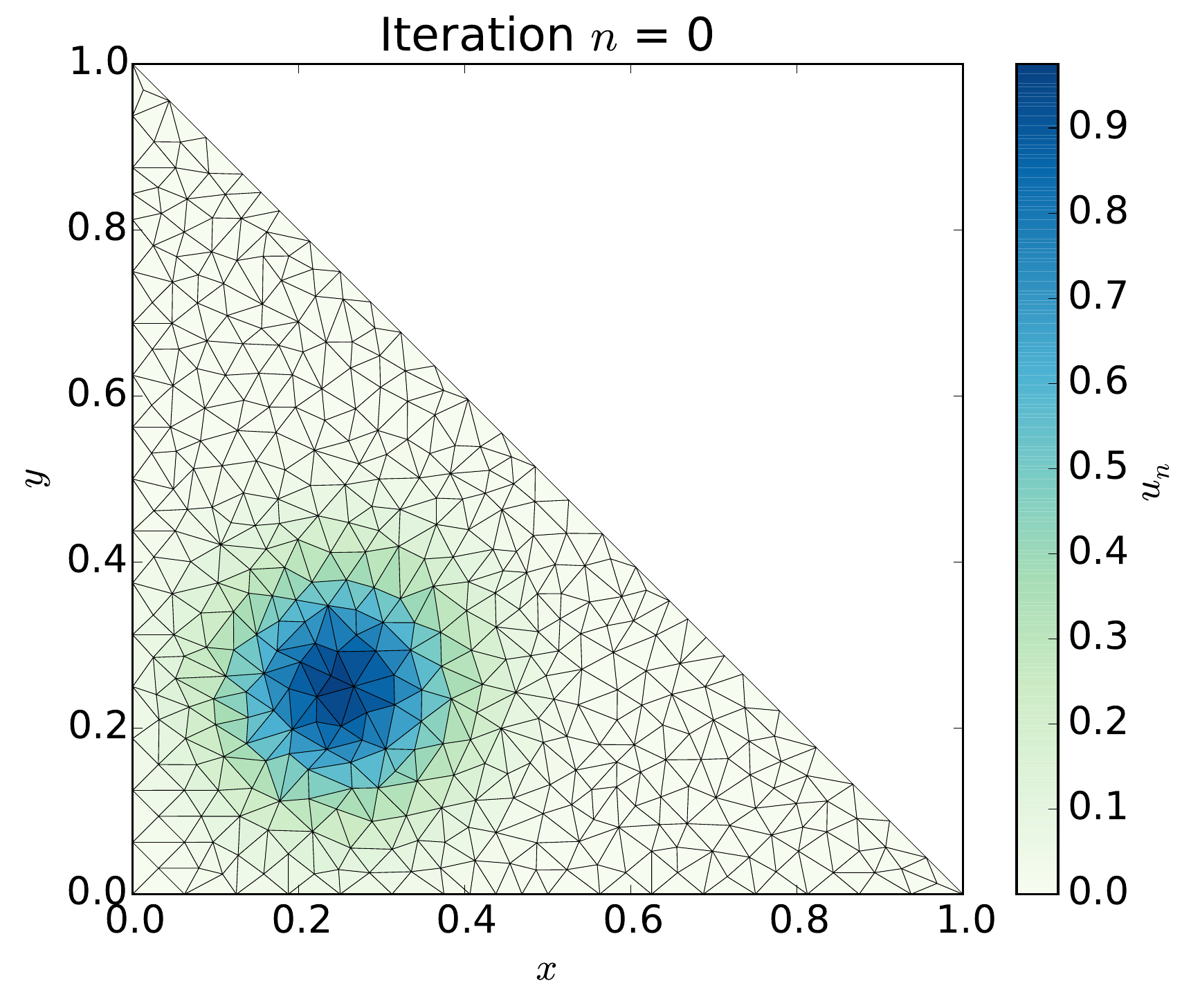}}
  \subfigure{\includegraphics[width=6.45cm]{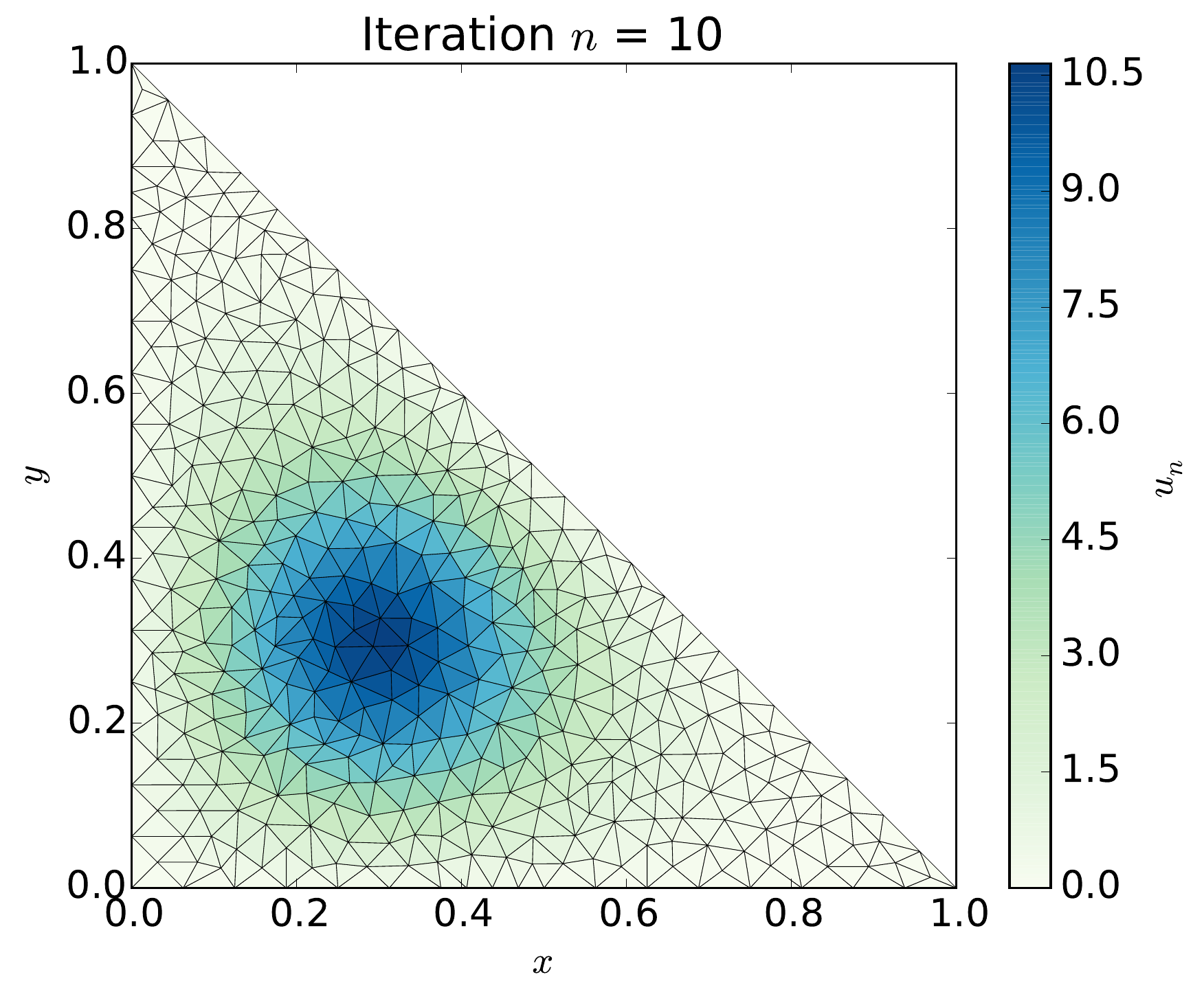}}

  \subfigure{\includegraphics[width=6.45cm]{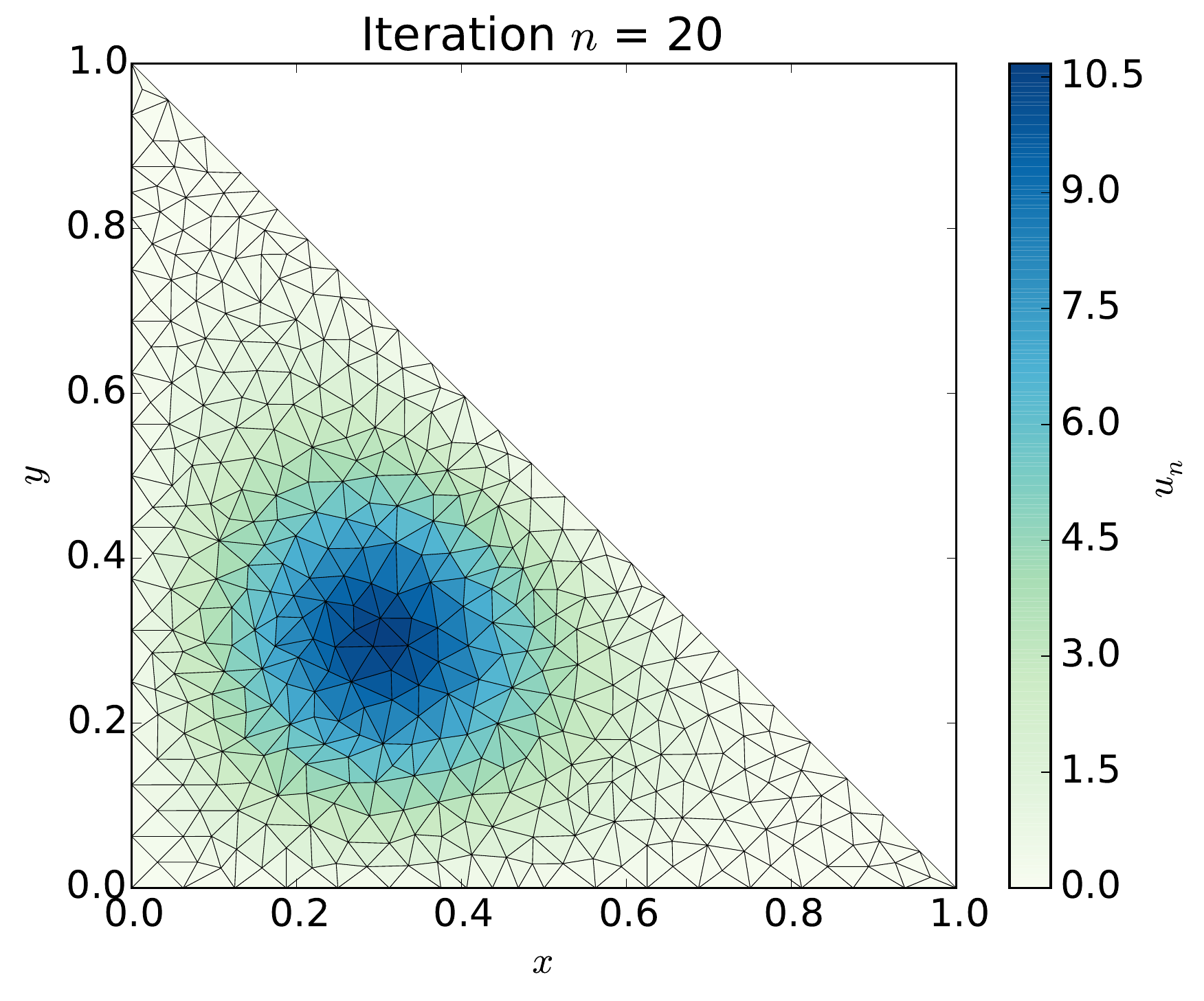}}
  \subfigure{\includegraphics[width=6.45cm]{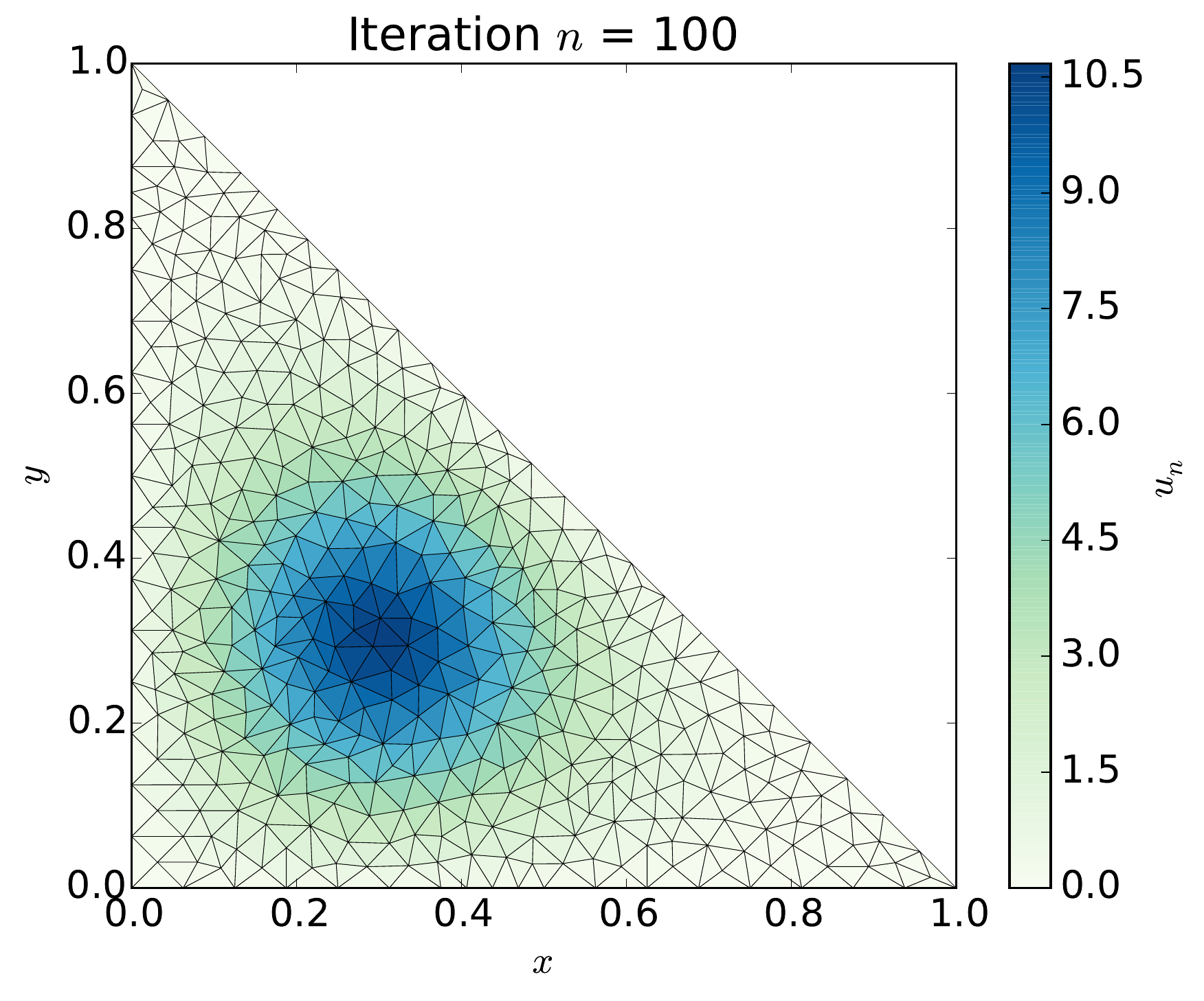}}
  \caption{Solution to \eqref{e:boundstate} on the triangular domain
    at several iterations.  Note the change in scales between
    iterations.}
  \label{f:tri_soln}
\end{figure}

\begin{figure}
  \begin{center}
    {\includegraphics[width=7cm]{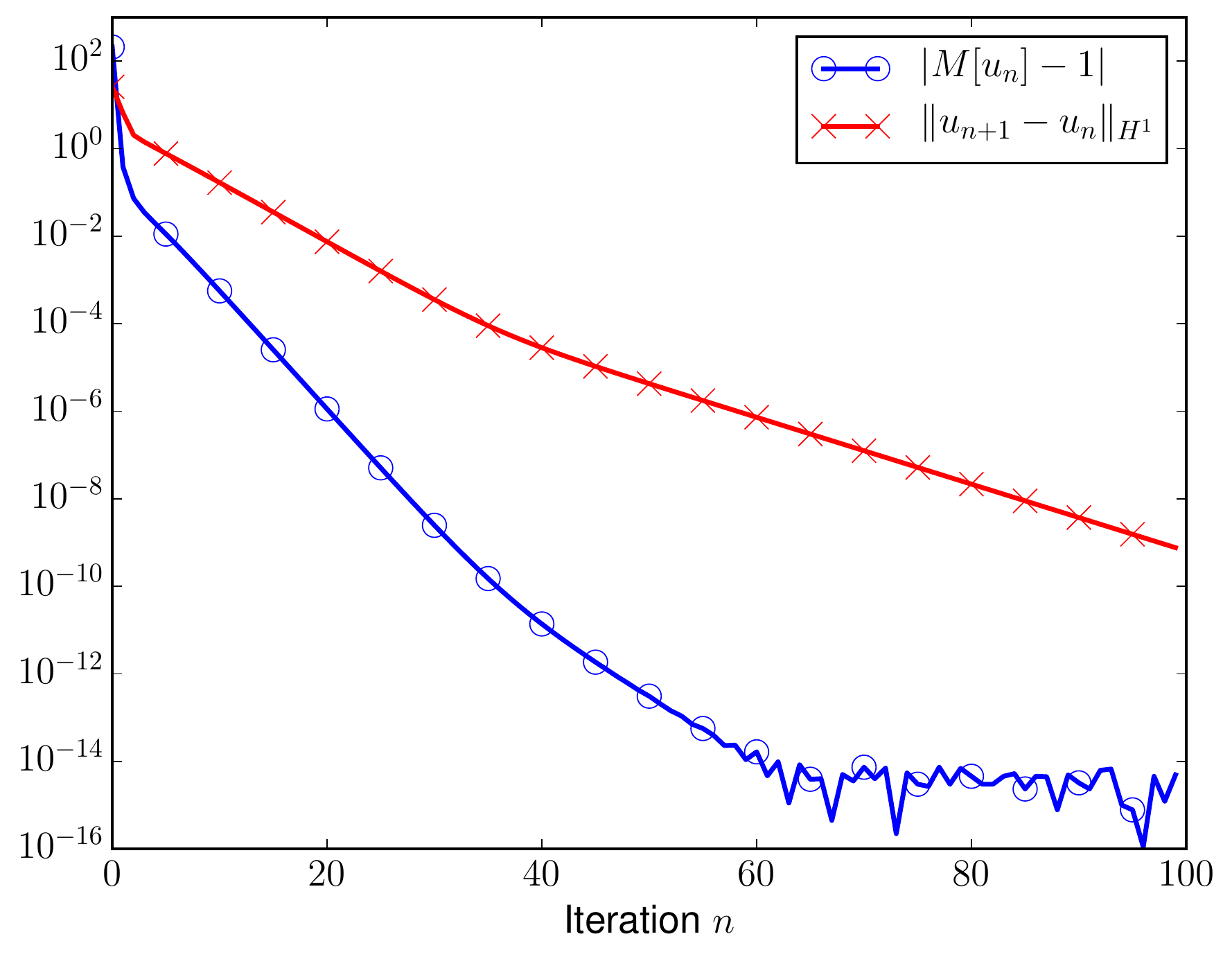}}
  \end{center}
  \caption{Diagnostics of the convergence of $u_n\to \phi$ when
    solving \eqref{e:boundstate} on the triangular domain.}
  \label{f:tri_diags}
\end{figure}

\section{Discussion}
\label{s:disc}

We have demonstrated and analyzed a computational algorithm for
solving the Dirichlet problem for \eqref{e:boundstate}.  Here, we
remark on some of our results and highlight open problems.

Our local results were predicated on Assumption \ref{a:spectral}, and
we have shown that, in some cases, it can be verified {\it a
  posteriori}.  But the question remains as to whether or not it will
hold generically for the ground state solution to the Dirichlet
problem.  For the problem on $\R^d$, more can been said.  In
\cite{Pelinovsky:2004bv}, it is shown that the number of unstable
directions of $(I-\Delta)^{-1}(p\abs{\phi}^{p-1}\bullet)$ can be
related to the number of negative eigenvalues of
\begin{equation*}
  \mathcal{H} =I - \Delta  - p\abs{\phi}^{p-1}\psi.
\end{equation*}
$\calH$ is just the linearization of \eqref{e:Rn_boundstate}, and its
spectrum is well characterized \cite{Sulem:1999kx}.  In the
subcritical regime, $(p-1)d < 4$, $\calH$ has a single negative
eigenvalue which will correspond to the unstable eigenvalue $p$ for
$(I-\Delta)^{-1}(p\abs{\phi}^{p-1}\bullet)$. $\calH$ also has a
kernel, $\nabla \phi$, which would correspond to a neutral eigenvalue
of 1 in the iteration algorithm.

For the Dirichlet problem, $\nabla \phi$ is not in $H^1_0$, as it will
fail to vanish at the boundary, so the neutral mode is eliminated.  We
conjecture that on sufficiently large and symmetric domains, the
Dirichlet problem will inherit the same number of unstable eigenvalues
as that of the problem on $\R^d$, but for small domains, or domains
with significant geometry, we cannot say, {\it a priori}, what becomes
the spectrum.

Our approach can be applied to simple generalizations of
\eqref{e:boundstate} such as
\begin{equation*}
  -\lambda \phi + \Delta \phi + \abs{\phi}^{p-1} \phi = 0,
\end{equation*}
where $\lambda$ is assumed larger than the first Dirichlet eigenvalue
of the Laplacian on $\Omega$.  More significant generalizations would
involve the inclusion of external potentials or symmetry breaking
nonlinearities.  This analysis might also be carried out for problems
with other boundary conditions, such as Neumann or Robin.

The main outstanding problem remains to explain the experimentally observed generic convergence towards ground state
solutions, even for data far from any solution.  We have
shown strong subsequential convergence, but we can only show that the sequence
is Cauchy when the limiting solution satisfies particular spectral
properties.  

Another challenge is to consider the total error of the algorithm, as
an approximation to the problem on $\R^d$.  The total error could be
decomposed as:
\begin{equation*}
  \begin{split}
    \underbrace{\norm{\phi - u_n^{(\Delta x, \Omega)}}}_{\text{Total
        Error}} &\leq \underbrace{\norm{\phi -
        \phi^{(\Omega)}}}_{\text{Modeling Error}}+
    \underbrace{\norm{\phi^{(\Omega)} -\phi^{(\Delta
          x,\Omega)}}}_{\text{Discretization Error}} +
    \underbrace{\norm{\phi^{(\Delta x,\Omega)}- u_n^{(\Delta
          x,\Omega)}}}_{\text{Algorithmic Error}}.
  \end{split}
\end{equation*}
We expect that the truncation error will be most severe with the
Dirichlet boundary condition, which is why Robin boundary conditions
merit study.  A particularly important case for such a complete
analysis would be the radial case in dimension $d$.  This is because
investigations of soliton dynamics for \eqref{e:nls} and \eqref{e:nlw}
are frequently performed with radial symmetry.  modelling
\section*{Acknowledgements}

\noindent The authors are grateful for several helpful conversations
with Svitlana Mayboroda.  D.Olson was supported by the Department of
Defense (DoD) through the National Defense Science and Engineering
Graduate Fellowship (NDSEG) Program.  S.Shukla was supported by
University of Minnesota UROP-11133.  G.Simpson began this work under
the support of the DOE DE-SC0002085 and the NSF PIRE OISE-0967140, and
completed it under NSF DMS-1409018.  D.Spirn was supported by NSF
DMS-0955687.

\bibliographystyle{siam}

\bibliography{pet_refs}

\end{document}